\newtheorem{theorem}{Theorem}
\newtheorem{proposition}[theorem]{Proposition}
\newtheorem{lemma}[theorem]{Lemma}
\newtheorem{corollary}[theorem]{Corollary}
\newtheorem*{theorem*}{Theorem}
\theoremstyle{definition}
\newtheorem{definition}[theorem]{Definition}
\theoremstyle{remark}
\newtheorem{remark}[theorem]{Remark}
\definecolor{Yellow}{rgb}{0.95,0.9,0.0} 
\definecolor{Red}{rgb}{0.8,0.1,0.1}
\definecolor{Green}{rgb}{0.1,0.65,0.2}
\definecolor{Blue}{rgb}{0.1,0.1,0.8}
\definecolor{Purple}{rgb}{0.7,0.1,0.7}
\definecolor{Grey}{rgb}{0.6,0.6,0.6}
\newcommand{\Prob}{\mathbf{P}}
\newcommand{\E}{\mathbf{E}}
\newcommand{\Hsp}{\mathbf{H}}
\newcommand{\Lsp}{\mathbf{L}}
\newcommand{\Filt}{\mathbf{F}}
\newcommand{\F}{\mathcal{F}}
\newcommand{\R}{\mathbf{R}}
\newcommand{\Ccpt}{C^\infty_{\mathrm{cpt}}}
\DeclareMathOperator*{\supp}{supp}
\newcommand{\dx}{\,\mathrm{d}x}
\newcommand{\dt}{\,\mathrm{d}t}
\newcommand{\dy}{\,\mathrm{d}y}
\newcommand{\ds}{\,\mathrm{d}s}
\newcommand{\dB}{\,\mathrm{d}B}
\newcommand{\da}{\,\mathrm{d}a}
\newcommand{\db}{\,\mathrm{d}b}
\newcommand{\ddt}{\frac{\mathrm{d}}{\mathrm{d}t}}
\newcommand{\Tweak}{T^*}
\newcommand{\Text}{T_{\mathrm{extinct}}}
\newcommand{\hatText}{\hat T_{\mathrm{extinct}}}
\newcommand{\Jplus}{\mathcal{J}^{2,+}}
\newcommand{\n}{\mathbf{n}}
\begin{document}

\title[Finite time extinction for SPME with transport noise]
{Finite time extinction for the 1D stochastic porous medium equation
with transport noise}

\author{Sebastian Hensel}
\address{Institute of Science and Technology Austria (IST Austria), Am~Campus~1, 3400 Klosterneuburg, Austria}
\email{sebastian.hensel@ist.ac.at}

\thanks{This project has received funding from the European Union's Horizon 2020 research and 
innovation programme under the Marie Sk\l{}odowska-Curie Grant Agreement No.\ 665385 
\begin{tabular}{@{}c@{}}\includegraphics[width=3ex]{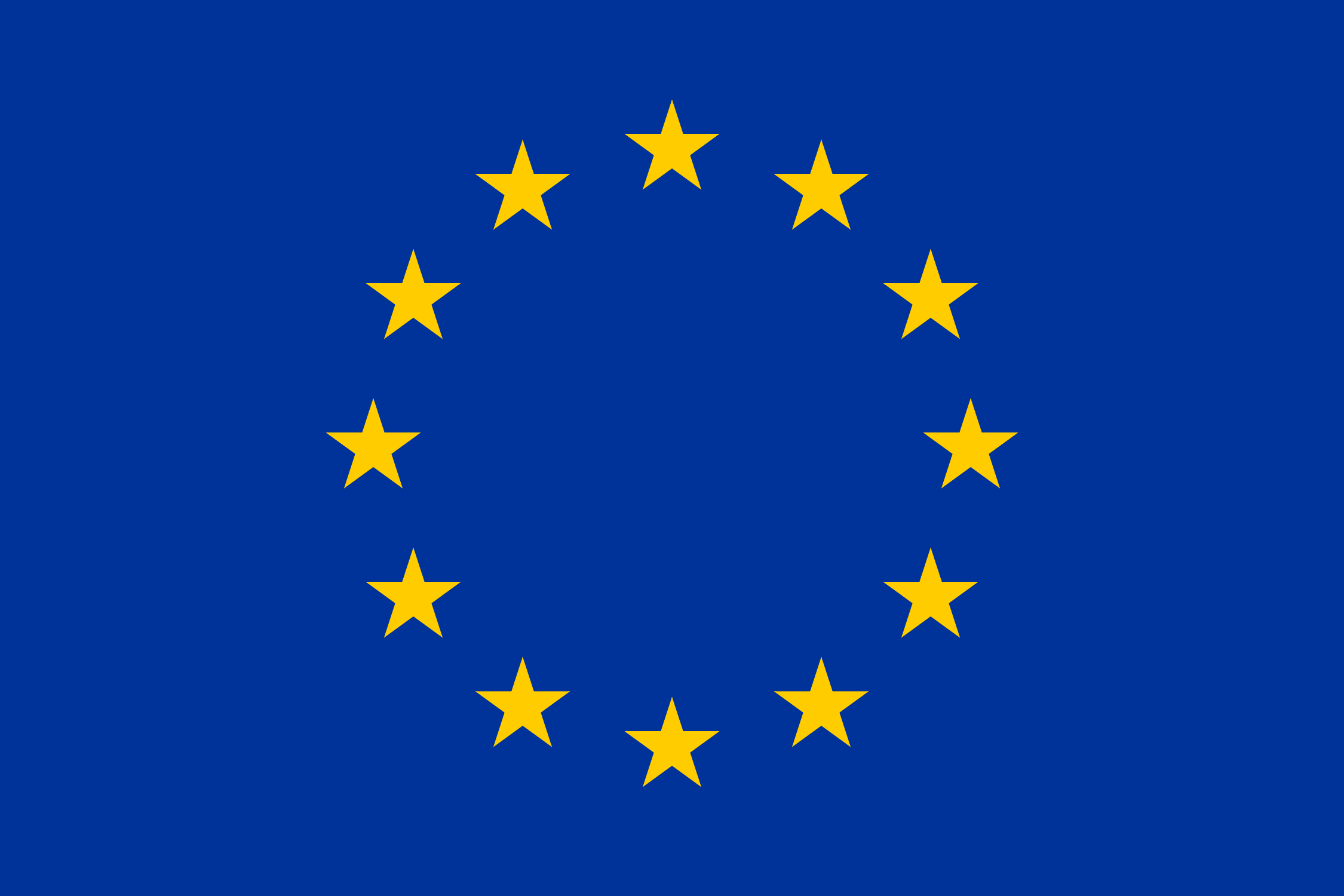}\end{tabular}.
I am very grateful to M.~Gerencs\'{e}r for the suggestion of this problem. I want to thank 
J.~Maas and M.~Gerencs\'{e}r for helpful discussions on the subjects of this paper, as well
as valuable feedback on an earlier version of this manuscript. Special thanks go to 
F.~Cornalba for suggesting the additional $\kappa$-truncation in Proposition~\ref{prop:L1contraction}.}%

\begin{abstract}
We establish finite time extinction with probability one for weak solutions of the
Cauchy--Dirichlet problem for the 1D stochastic porous medium equation with Stratonovich transport noise 
and compactly supported smooth initial datum. Heuristically, this is expected to hold
because Brownian motion has average spread rate $\smash{O(t^\frac{1}{2})}$ 
whereas the support of solutions to the deterministic PME grows only with rate 
$\smash{O(t^{\frac{1}{m{+}1}})}$. The rigorous proof relies on a contraction
principle up to time-dependent shift for Wong--Zakai type approximations, 
the transformation to a deterministic PME with two copies of a Brownian path 
as the lateral boundary, and techniques from the theory of viscosity solutions. 
\end{abstract}

\maketitle

\section{Introduction}
We study the stochastic porous medium equation with Stratonovich transport noise
\begin{align}
\label{eq:SPME}
\mathrm{d}u = \Delta u^m\,\mathrm{d}t + \nu\nabla u\circ\mathrm{d}B_t
\end{align}
in the space-time cylinder $Q=D\times (0,T)$
for some bounded convex domain $D\subset\R^d$ with $C^2$ boundary $\partial D$ and time horizon $T\in (0,\infty]$.
Here $u\geq 0$ denotes the so-called \textit{density variable}, $B$ is
a $d$-dimensional standard Brownian motion and $m>1$ as well as $\nu>0$ are parameters. 
Moreover, the Stratonovich differential $\nu\nabla u\circ\mathrm{d}B$
is an abbreviation for the expression $\frac{1}{2}\nu^2\Delta u\dt + \nu\nabla u\cdot\mathrm{d}B$.

We are in particular interested in studying the corresponding Cauchy--Dirichlet problem, 
i.e., we supplement the stochastic PDE \eqref{eq:SPME} with
\begin{align}
\label{eq:IC} u(x,0) &=  u_0(x),\quad\hspace{0.25cm} x\in\bar{D}, \\
\label{eq:lateralBC} u(x,t) &= 0,\qquad (x,t)\in\partial D\times (0,T].
\end{align}
The initial condition is given by a non-negative smooth function
$ u_0\in\Ccpt(D)$ with compact support in $D$. In particular, \eqref{eq:IC}
and \eqref{eq:lateralBC} are compatible to any order.

The main result of the present work is that weak solutions (in the precise sense of Definition~\ref{def:weakSolutions} below) 
to the 1D Cauchy--Dirichlet problem of the SPME \eqref{eq:SPME}--\eqref{eq:lateralBC}
become extinct in finite time on a set of full probability. For a mathematically precise statement 
we refer the reader to Theorem~\ref{theorem:mainResult} below. To the best of the author's
knowledge, this is the first result establishing finite time extinction in the \textit{slow diffusion regime} $m>1$
for the stochastic porous medium equation. In contrast, finite time extinction in the \textit{fast diffusion regime} 
$0<m<1$ is well known in several cases. For instance, Barbu, Da Prato and R\"{o}ckner establish
finite time extinction for porous medium type models with multiplicative noise in the works 
\cite{Barbu:2009}, \cite{Barbu:2009a} and~\cite{Barbu:2012}. Gess treats the case of 
stochastic sign fast diffusion equations in~\cite{Gess:2015}. In a very recent work of Turra \cite{Turra:2019}, 
finite time extinction in the fast diffusion regime is established for transport noise. The slow diffusion
case with transport noise, however, is left open.

The underlying idea of the present work is to exploit that the support of solutions to the deterministic 
porous medium equation grows only with rate $\smash{O(t^{\frac{1}{m{+}1}})}$ whereas Brownian motion spreads 
on average with a faster rate $\smash{O(t^\frac{1}{2})}$. Hence one may expect that the effect of the random advection term
in \eqref{eq:SPME} eventually dominates the non-linear diffusion term, thus leading to finite time extinction 
on a set of full probability. Let us briefly mention the three main ingredients which provide a rigorous
justification for this heuristic (for a more detailed and mathematical account on the strategy, 
we refer the reader to Section~\ref{sec:strategy}). Along the way we provide connections to the---for the purposes 
of this work---relevant parts of the by now extensive literature for the stochastic porous medium equation.

In a first step, we recover the unique weak solution (in the sense of Definition~\ref{def:weakSolutions} below) 
to the Cauchy--Dirichlet problem of the SPME \eqref{eq:SPME}--\eqref{eq:lateralBC} by means of a
suitable Wong--Zakai approximation. We then perform, on the level of the approximate solutions,
a simple stochastic flow transformation yielding smooth solutions to a deterministic porous medium equation, 
but now with two copies of a mollified Brownian path as the lateral boundary. In principle, we would then like to remove
the mollification parameter to formally obtain a solution to the deterministic PME with two copies of a Brownian
path as the lateral boundary. We may view this solution---again at least on a formal level---as a solution 
to the full space porous medium equation by trivially extending it outside of the domain. We finally
put a Barenblatt profile above the solution at the initial time. It follows by the comparison principle 
that finite time extinction happens once the domain with two copies of a Brownian path as the lateral boundary 
is pushed outside the support of such a Barenblatt solution. Since the support of the Barenblatt solution has 
finite speed of propagation of order $O(t^\frac{1}{m+1})$, this actually occurs with probability one.

However, two technical issues being linked to each other arise in the above strategy. The first is concerned 
with the proposed limit passage after the stochastic flow transformation. The problem is that the domains on which
the approximate and transformed solutions are supported are not monotonically ordered as one sends the
mollification scale to zero. This fact together with the desire to make use of the comparison principle
motivated us to perform this limit passage in the framework of viscosity theory by means of the technique of 
semi-relaxed limits. This allows us to rigorously construct a \textit{maximal subsolution}
for the deterministic porous medium equation with two copies of a Brownian path as the lateral boundary (to be
understood in the precise sense of Definition~\ref{def:subsolutionCauchyDirichlet} below).
To the best of the author's knowledge, the present work seems to be the first instance to make use of 
viscosity theory to study the Dirichlet problem for the stochastic porous medium equation after stochastic flow transformation.

A notion of viscosity solution for the full space deterministic porous medium equation
has been developed by Caffarelli and V\'{a}zquez~\cite{Caffarelli:1999} (see also the work of 
V\'{a}zquez and Br\"{a}ndle~\cite{Vazquez:2005}). However, as a consequence of the degeneracy of the
porous medium equation the usual notion of supersolutions from \cite{Crandall:1992} had to be adapted.
But since we only need to talk about subsolutions for our purposes, this is actually of no concern for us.

However, by taking a semi-relaxed limit it is by no means clear anymore how this maximal
subsolution and the transformed unique weak solution to  \eqref{eq:SPME}--\eqref{eq:lateralBC} 
relate to each other. In order to close the argument,
we need to make sure that the unique weak solution of the original problem \eqref{eq:SPME}--\eqref{eq:lateralBC}
is dominated after stochastic flow transformation by the constructed maximal subsolution in a suitable sense.
It turns out that we can justify this once we provided convergence of the Wong--Zakai approximations 
in a strong enough topology like~$L^1$. Let us mention at this point the 
recent work of Fehrman and Gess~\cite{Fehrman:2019}, who prove rough path well-posedness 
for nonlinear gradient type noise using a kinetic formulation which 
in particular implies Wong-Zakai type results, but with the equation being posed on the torus.

There has been recently a lot of focus on providing well-posedness for stochastic porous medium equations 
in an $L^1$ framework. The corresponding notion of solution is the one of \textit{entropy solutions}
first put forward in the work of Dareiotis, Gerencs\'{e}r and Gess~\cite{Dareiotis:2018},
and the theory is based on a quantitative $L^1$ contraction principle for such solutions.
In the work~\cite{Dareiotis:2018}, the authors consider the case of nonlinear multiplicative noise 
with periodic boundary conditions. The subsequent work of Dareiotis and Gess~\cite{Dareiotis:2019} 
treats the regime of nonlinear conservative gradient noise (again with periodic boundary conditions), 
whereas the recent work~\cite{Dareiotis:2019a} by Dareiotis, Gess and Tsatsoulis provides the 
corresponding theory of \cite{Dareiotis:2018} for the Dirichlet problem on smooth domains.

Following ideas developed in \cite{Dareiotis:2018}, we roughly speaking aim to establish $L^1$ convergence of the sequence
of Wong--Zakai approximations. However, as we are concerned in this work with the Cauchy--Dirichlet 
problem for the SPME with transport noise, the actual implementation differs in some aspects to the 
above mentioned works. Most importantly, since two different approximate solutions are advected with
different speeds (as a consequence of the two associated different mollification scales) it is
natural to introduce a time-dependent, random shift function and to compare the Wong--Zakai
approximations only after shifting. We then essentially establish quantitative $L^1$ contraction for
this sequence of shifted densities, see Proposition~\ref{prop:L1contraction} below for the mathematically 
precise formulation. This result may be of independent interest.

\subsection*{Notation}
Throughout the paper, we fix a filtered probability space $(\Omega,\F,\Filt,\Prob)$ 
with filtration $\Filt=(\F_t)_{t\in [0,\infty)}$ which is subject to the usual conditions, i.e.,
the filtration $\Filt$ is right-continuous and $\F_0$ is $\Prob$-complete. By $B=(B_t)_{t\in [0,\infty)}$
we denote a standard $d$-dimensional $\Filt$-Brownian motion. For a given time horizon $T\in (0,\infty]$ we let $\mathcal{P}_T$ be
the predictable $\sigma$-field on $\Omega_T:=\Omega\times[0,T]$. 

Let $D\subset\R^d$ be an open domain. (In the one-dimensional 
setting, we will instead use the notation $I\subset\R$ for a bounded open interval.)
The space of all smooth functions with compact support in $D$
is denoted by $C^\infty_{\mathrm{cpt}}(D)$. The space $H^1_0(D)$ is the usual space of Sobolev functions with 
zero trace on the boundary. Its topological dual space will be denoted by $H^{-1}$. The space of 
$H^{-1}$ valued, continuous and $\Filt$-adapted stochastic processes intersected with 
$L^q(\Omega_T,\mathcal{P}_T;L^q(D))$ is given by $\Hsp^{-1}_q(D)$. The space of $L^2(D)$ valued, 
continuous and $\Filt$-adapted stochastic processes intersected with the space $L^2(\Omega_T,\mathcal{P}_T;H^1_0(D))$)
(resp.\ the space $L^2(\Omega_T,\mathcal{P}_T;H^1(D))$) is $\Lsp^2_0(D)$ (resp.\ $\Lsp^2(D)$). 
If we want to specify the target values for a function space, we will include this in the notation.
E.g., $C_{\mathrm{cpt}}^\infty(D;[0,\infty))$ is the space of all non-negative smooth functions with compact support in $D$.
Finally, the space of functions of bounded variation is denoted by $BV$.

The Lebesgue measure on $\R^d$ is denoted by $\mathcal{L}^d$, the $s$-dimensional Hausdorff measure by $\mathcal{H}^s$.
We write $\chi_A$ for the indicator function of a measurable set $A$ (with respect to any measure space). For 
two numbers $a,b\in\R$ we abbreviate $a\vee b$ for their maximum respectively $a\wedge b$
for their minimum. Finally, we define $a_+:=a\vee 0$ and $\mathrm{sign}_+(a):=\chi_{(0,\infty)}(a)$ for $a\in\R$.

\section{Main result}
The main result of the present work establishes that weak solutions of the one-dimensional  
stochastic porous medium equation with Stratonovich transport noise \eqref{eq:SPME}--\eqref{eq:lateralBC} 
almost surely have the finite time extinction property. Before we provide the precise statement, let us 
introduce the notion of weak solutions. 

\begin{definition}
\label{def:weakSolutions}
Let $D\subset\R^d$ be a bounded domain with $C^2$ boundary, $\Tweak\in (0,\infty]$ be a time horizon
as well as $u_0\in\Ccpt(D;[0,\infty))$ be an initial density. A non-negative stochastic 
process $u\in\Hsp^{-1}_{m+1}(D)$ is called a \emph{weak solution to the Cauchy--Dirichlet problem}
\eqref{eq:SPME}--\eqref{eq:lateralBC} \emph{of the stochastic porous medium equation (SPME)
with initial density $u_0$ on the space time cylinder} $Q = D\times (0,\Tweak)$ if for all 
$\phi\in C^\infty_{\mathrm{cpt}}(D)$ with probability one it holds
\begin{equation}\label{eq:weakSol}
\begin{aligned}
&\int_D u(x,T)\phi(x) \dx
-\int_D  u_0(x)\phi(x)\dx
\\&
=\int_0^T\int_D \Big(u^m(x) {+} \frac{1}{2}\nu^2u(x)\Big)\Delta\phi(x) \dx \dt
-\int_0^T\int_D \nu u(x)\nabla\phi(x) \dx \dB_t
\end{aligned}
\end{equation}
for all $T\in (0,\Tweak)$.
\end{definition}

\begin{remark}
Existence and uniqueness of weak solutions in the sense of Definition~\ref{def:weakSolutions}
was established, for instance, in \cite[Theorem 3.3]{Dareiotis:2017}. In fact, the authors consider a much more
general class of degenerate quasilinear stochastic PDE, allowing for more general initial conditions and,
in particular, for signed solutions.
\end{remark}

Being equipped with the notion of weak solutions we may now formulate
the main result of the present work, which is finite time extinction 
with probability one for weak solutions of the 1D stochastic porous 
medium equation with Stratonovich transport noise. 

\begin{theorem}
\label{theorem:mainResult}
Let $I\subset\R$ be a bounded open interval, and let $u\in\Hsp^{-1}_{m+1}(I)$ be the 
unique weak solution to the SPME \emph{\eqref{eq:SPME}--\eqref{eq:lateralBC}} 
with initial density $ u_0\in\Ccpt(I;[0,\infty))$ on the space-time cylinder 
$I\times (0,\infty)$ in the sense of Definition~\ref{def:weakSolutions}. Define
\begin{align} 
\label{def:extinctionTime}
\Text := \inf\{T\geq 0\colon u(\cdot,T)=0\text{ almost everywhere in } I\}.
\end{align}
Then there exists a constant $M=M(u_0)>0$ depending only on the initial density $u_0$ such that 
\begin{align}\label{eq:boundExtinctionTime}
\Prob(\Text\leq T) \geq \Prob\big(\inf\big\{t\geq 0\colon |B_t| \geq \mathcal{L}^1(I){+}Mt^\frac{1}{m+1}\big\}\leq T\big)
\end{align}
for all $T\in (0,\infty)$. Moreover, for
$\hatText := \inf\{T\geq 0\colon |B_T| \geq \mathcal{L}^1(I){+}MT^\frac{1}{m+1}\}$
it holds $\Prob(\Text<\infty) = \Prob(\hatText<\infty) = 1$. Finally, for all $T\in (0,\infty)$ 
we have almost surely on $\{T\geq\hatText\}$ that $u(\cdot,T)=0$ is satisfied almost everywhere in $I$.
\end{theorem}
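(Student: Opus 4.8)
The plan is to make rigorous the heuristic sketched in the introduction: transform the stochastic problem into a deterministic porous medium equation on a domain with a randomly moving lateral boundary, and then trap the transformed solution underneath a Barenblatt profile which has finite propagation speed. I will work pathwise, i.e. fix a typical Brownian sample path $t\mapsto B_t(\omega)$.

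\medskip

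\textbf{Step 1 (Wong--Zakai approximation and stochastic flow transformation).} I would first approximate $B$ by smooth paths $B^\varepsilon$ (e.g. mollification in time) and consider the corresponding approximate equations, whose solutions $u^\varepsilon$ exist classically and converge to the unique weak solution $u$. At the level of each $u^\varepsilon$, the change of variables $v^\varepsilon(x,t) := u^\varepsilon(x + \nu B^\varepsilon_t, t)$ kills the transport term and the Stratonovich correction, turning \eqref{eq:SPME} into the deterministic PME $\partial_t v^\varepsilon = \Delta (v^\varepsilon)^m$ on the moved cylinder $\{(x,t) : x + \nu B^\varepsilon_t \in I\}$, with zero lateral data. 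The issue flagged in the introduction is that these moved cylinders are not monotone in $\varepsilon$, so I would pass to the limit using semi-relaxed limits from viscosity theory to produce a \emph{maximal subsolution} $\bar v$ of the deterministic PME on the limiting (Brownian) moved domain $\{(x,t) : x + \nu B_t \in I\}$, in the sense of Definition~\ref{def:subsolutionCauchyDirichlet}.

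\medskip

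\textbf{Step 2 (Identifying the transformed weak solution with the subsolution).} To make the comparison argument bite, I need that the original weak solution $u$, after the (now genuinely stochastic) flow transformation $v(x,t) := u(x + \nu B_t, t)$, lies below $\bar v$ almost everywhere. This is where the $L^1$ contraction principle of Proposition~\ref{prop:L1contraction} enters: strong $L^1$ convergence of the (shifted) Wong--Zakai approximations $u^\varepsilon \to u$ lets me pass the inequality $v^\varepsilon \le$ (approximate subsolution) to the limit, so $v \le \bar v$ a.e.\ on a set of full probability.

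\medskip

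\textbf{Step 3 (Barenblatt barrier and extinction).} On the full space, the Barenblatt solution $U_C$ with mass parameter chosen so that $U_C(\cdot,0) \ge u_0$ on $\bar I$ has support $\{|x| \le R(t)\}$ with $R(t) = C' t^{1/(m+1)}$ for an explicit constant. Extending $\bar v$ by zero outside the moved domain gives a subsolution of the full-space PME (here one uses that $\bar v$ vanishes on the lateral boundary, so the zero extension does not create a spurious positive part); the comparison principle for the PME then yields $v(x,t) \le U_C(x + \nu B_t, t)$ — wait, I must be careful about which frame carries the shift: since $v(x,t) = u(x+\nu B_t,t)$ sits over the moved domain, comparing in the $x$-variable gives $u(y,t) \le U_C(y, t)$ only after undoing the shift, i.e. $u(y,t) \le U_C(y - \nu B_t + \nu B_t, t)$; the clean statement is that $u(\cdot,t)$ is supported in the set where the \emph{shifted} interval $\nu B_t + I$ still meets $\{|x|\le R(t)\}$. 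Therefore $u(\cdot,t) \equiv 0$ as soon as the interval $\nu B_t + I$ has been pushed entirely outside $[-R(t),R(t)]$, which happens once $\nu|B_t| \ge R(t) + \mathcal{L}^1(I)$, i.e. (after absorbing $\nu$ into $C'$ and renaming $M$) once $|B_t| \ge \mathcal{L}^1(I) + M t^{1/(m+1)}$. This gives \eqref{eq:boundExtinctionTime} with $\hatText$ as defined, and the a.s.\ statement $u(\cdot,T) = 0$ on $\{T \ge \hatText\}$.

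\medskip

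\textbf{Step 4 (Almost sure finiteness of $\hatText$).} It remains to show $\Prob(\hatText < \infty) = 1$, i.e. that a.s.\ there exists $t$ with $|B_t| \ge \mathcal{L}^1(I) + M t^{1/(m+1)}$. Since $m > 1$ we have $\tfrac{1}{m+1} < \tfrac12$, so by the law of the iterated logarithm (or simply $\limsup_{t\to\infty} |B_t| / \sqrt{2t\log\log t} = 1$ a.s.) the Brownian path dominates the sublinear-in-$t^{1/2}$ barrier $M t^{1/(m+1)}$ for a sequence of times tending to infinity; hence the hitting time is a.s.\ finite. Combined with Step 3 this gives $\Prob(\Text < \infty) = 1$.

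\medskip

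\textbf{Main obstacle.} The genuinely hard part is not the probabilistic input (Step 4 is essentially immediate from LIL) but \emph{linking the viscosity-theoretic maximal subsolution $\bar v$ to the analytically defined weak solution $u$} (Step 2): one is constructed via semi-relaxed limits and lives in the viscosity world, the other is an $H^{-1}$-valued energy solution, and bridging the two requires exactly the quantitative shifted $L^1$ contraction estimate of Proposition~\ref{prop:L1contraction}, together with care about the non-monotone moved domains and the behavior at the moving lateral boundary. Getting the zero extension of $\bar v$ to be an honest full-space subsolution — which needs the boundary trace to vanish in a strong enough sense — is the other delicate point.
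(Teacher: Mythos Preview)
Your proposal is correct and follows the same strategy as the paper. Two minor implementation differences: the paper carries out the viscosity-theoretic part in the \emph{pressure} variable $p=\tfrac{m}{m-1}u^{m-1}$ (standard for PME following Caffarelli--V\'azquez), and it obtains the Barenblatt bound not by arguing directly that the zero-extension of the maximal subsolution is a full-space subsolution, but by first establishing $\bar p_{\max}\le p_{\mathrm{visc}}$ (the full-space viscosity solution, a bound inherited from the approximations $\bar p_\varepsilon$, each of which \emph{is} a full-space subsolution by a semijet check at the smooth lateral boundary) and then using $p_{\mathrm{visc}}\le\mathcal{B}$---this route sidesteps the boundary-trace issue you correctly flag as delicate.
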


\section{Outline of the strategy}
\label{sec:strategy}
Let us comment on the strategy for the proof of the main result, Theorem~\ref{theorem:mainResult}. 
In a nutshell the argument works as follows. By stochastic flow transformation,
which in our case of pure transport noise is of particularly easy form, we may consider first
the transformed densities
\begin{align*}
v(x,t) := u(x{-}\nu B_t,t),\quad (x,t)\in \bigcup_{t\in (0,\infty)}(\nu B_t{+}I)\times\{t\}.
\end{align*}

Formally, the transformed density $v$ is then subject to the deterministic porous medium equation
\begin{align*}
\partial_t v = \partial_{xx}v^m
\quad\text{in}\quad \bigcup_{t\in (0,\infty)}(\nu B_t{+}I)\times\{t\},
\end{align*}
with initial density $v(\cdot,0)= u_0$ and Dirichlet boundary conditions on the
lateral boundary $\bigcup_{t\in [0,\infty)}(\nu B_t{+}\partial I)\times\{t\}$.
Extending $v$ trivially to the entire space-time domain $\R\times (0,\infty)$,
and denoting this extension by $\bar v$, we obtain, at least formally, a solution to the Cauchy problem of
the deterministic porous medium equation $\partial_t\bar v=\partial_{xx}\bar v^m$
with initial density $\bar v(\cdot,0) =  u_0 \geq 0$. 

Now, we may choose a Barenblatt profile $\mathcal{B}$ with free boundary 
$\partial I$ at $t=0$ and which strictly dominates the initial density,
i.e., $\{\mathcal{B}(\cdot,0)=0\}=\partial I$ and $ u_0<\mathcal{B}(\cdot,0)$ on $I$.
By comparison it follows $\bar v \leq \mathcal{B}$ on $\R\times (0,\infty)$.
Since the support of the Barenblatt solution has finite speed of propagation of order $O(T^\frac{1}{m+1})$
(with the implicit constant only depending on the initial density), we deduce
the validity of \eqref{eq:boundExtinctionTime}. However, since Brownian motion
has average spread rate of order $O(T^\frac{1}{2})$ and we assume that $m>1$, this entails
that finite time extinction happens with probability one: $\Prob(\Text<\infty) = 1$.

To make this argument rigorous, we rely on a suitable approximation procedure.
To this end, we first consider the $d$-dimensional setting and fix a finite time horizon $T^*\in (0,\infty)$. Moreover,
choose $\rho\in C^\infty_{\mathrm{cpt}}((0,1);[0,\infty))$ with $\int_{\R} \rho(r)\,\mathrm{d}r = 1$
and define $\rho_\varepsilon := \frac{1}{\varepsilon}\rho(\frac{\cdot}{\varepsilon})$
for $\varepsilon>0$. We then introduce the smooth approximations $B^\varepsilon=(B^\varepsilon_t)_{t\in [0,\infty)}$ 
to Brownian motion defined via
\begin{align}
\label{eq:approxBrownianMotion}
B^\varepsilon_t := \int_{0}^\infty \rho_\varepsilon(t{-}s)B_s\,\mathrm{d}s. 
\end{align}

Since the mollifier $\rho$ is supported on positive times, we note that $B^\varepsilon$
is $\Filt$-adapted. We also have almost surely that 
\begin{align}
\label{eq:uniformBoundDyadicApproximation}
\sup_{\varepsilon > 0}\|B^\varepsilon\|_{C^0([0,T^*])}<\infty
\end{align}
as well as
\begin{align}
\label{eq:uniformConvergenceDyadicApproxiamtion}
B^\varepsilon \to B \text{ uniformly on } [0,T^*] \text{ as } \varepsilon \to 0.
\end{align}
Fix $\alpha\in (\frac{1}{3},\frac{1}{2})$. For purely technical reasons, we make also use of the classical fact 
that there is a square integrable random variable $\mathcal{C}_\alpha$ 
such that with probability one
\begin{align}
\label{eq:HoelderContBM}
|B_t-B_s| \leq \mathcal{C}_\alpha|t-s|^\alpha
\text{ for all } s,t\in [0,T^*].
\end{align}
In particular, for each integer $M\geq 1$ and each $\delta>0$
we can find an $\varepsilon'=\varepsilon'(M,\delta)>0$ 
such that for all $\varepsilon\leq\varepsilon'$ it holds
\begin{align}
\label{eq:supportAfterShift}
\sup_{t\in [0,T^*]}|B_t-B_t^\varepsilon| \leq \delta
\text{ almost surely on } \{\mathcal{C}_\alpha\leq M\}.
\end{align}
We then proceed by considering the inhomogeneous Cauchy--Dirichlet problem
\begin{align}
\label{eq:densityPME2}
\mathrm{d}u_\varepsilon &= \Delta u^m_\varepsilon\,\mathrm{d}t 
+ \nu\nabla u_\varepsilon\cdot\mathrm{d}B^\varepsilon_t,
\quad (x,t) \in D\times (0,T^*),
\\
\label{eq:densityPMEic2}
u_\varepsilon(x,0) &=  u_0(x) + \varepsilon,
\quad\hspace{2.42cm} x\in \bar D,
\\
\label{eq:densityPMElateral2} 
u_\varepsilon(x,t) &= \varepsilon,
\quad\hspace{3.13cm} (x,t)\in \partial D\times (0,T^*].
\end{align}

Note that the regularization not only comes from the Wong--Zakai approximation
but also by choosing strictly positive initial and boundary data. In this
way we circumvent the degeneracy at $u=0$ of the porous medium operator
at the level of the approximations, i.e., we can solve the problem \eqref{eq:densityPME2}--\eqref{eq:densityPMElateral2}
in a classical and pathwise sense such that the maximum principle applies. 
Proofs will be provided in the subsequent Section~\ref{sec:solutionTheory}.

\begin{lemma}
\label{lem:classicalSolution}
For each $\varepsilon>0$, there exist $u_\varepsilon\in\varepsilon{+}\Lsp^2_0(D)$
such that the following holds true on a set with probability one 
(e.g., so that \eqref{eq:uniformBoundDyadicApproximation} holds):

For all $\varepsilon>0$ the map $u_\varepsilon$ is the unique classical solution of the 
Cauchy--Dirichlet problem \emph{\eqref{eq:densityPME2}--\eqref{eq:densityPMElateral2}} 
in the sense that $u_\varepsilon\in C^{2,1}_{x,t}(\bar{D}{\times}[0,T^*])\cap C^\infty(D{\times}(0,T^*))$ 
and the equations \emph{\eqref{eq:densityPME2}--\eqref{eq:densityPMElateral2}} are satisfied pointwise everywhere.
From the maximum principle, we have the bounds
\begin{align}
\label{boundMaximumPrinciple}
\varepsilon \leq u_\varepsilon(x,t)  
\leq \varepsilon + \|u_0\|_{L^\infty(D)}
\end{align}
for all $\varepsilon>0$ and all $(x,t)\in\bar{D}{\times}[0,T^*]$. There is a constant
$C=C(T^*,\nu,u_0)>0$ and some $\beta>0$ such that the a priori estimates
\begin{align}
\label{eq:energyInequalityApprox}
&\sup_{T\in [0,T^*]}\int_D \frac{1}{2}|u_\varepsilon|^2(T) \dx
+\int_0^{T^*}\int_D mu_\varepsilon^{m-1}|\nabla u_\varepsilon|^2 \dx \dt
\leq \int_D \frac{1}{2}|u_\varepsilon(0)|^2 \dx, 
\\
\label{eq:boundTimeDerivativeApprox}
&\sup_{T\in [0,T^*]}\E\int_D \frac{T}{2}|\nabla u_\varepsilon^m|^2(T) \dx
+ \E\int_0^{T^*}\int_D \frac{t}{2}mu_\varepsilon^{m-1}
|\partial_t u_\varepsilon|^2 \dx \dt
\leq \varepsilon^{-\beta}C,
\\
\label{eq:boundLaplacianApprox}
&\,\E\int_0^{T^*}\int_D t|\Delta u_\varepsilon^m|^2 \dx \dt
\leq \varepsilon^{-\beta}C
\end{align}
hold true almost surely for all $\varepsilon>0$. 
\end{lemma}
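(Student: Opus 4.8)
The plan is to work pathwise. Fix $\omega$ in the full-probability event on which \eqref{eq:uniformBoundDyadicApproximation} and \eqref{eq:HoelderContBM} hold. Since $\rho$ is supported on positive times, $t\mapsto B^\varepsilon_t$ is then a bounded $C^\infty$ curve on $[0,T^*]$, so $\mathrm dB^\varepsilon_t=\dot B^\varepsilon_t\dt$ and \eqref{eq:densityPME2} turns into the \emph{deterministic} quasilinear equation $\partial_t u_\varepsilon=\nabla\cdot\big(mu_\varepsilon^{m-1}\nabla u_\varepsilon\big)+\nu\dot B^\varepsilon_t\cdot\nabla u_\varepsilon$ on $D\times(0,T^*)$, with coefficients that are smooth and---for fixed $\varepsilon$---bounded, with smooth initial datum $u_0{+}\varepsilon$ and constant lateral datum $\varepsilon$; since $u_0$ has compact support in $D$, the data coincide with $\varepsilon$ near $\partial D$ and the compatibility conditions hold to all orders. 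To remove the degeneracy of $r\mapsto mr^{m-1}$ at $r=0$, I would first replace it by a globally smooth, uniformly positive truncation agreeing with $mr^{m-1}$ on $[\tfrac\varepsilon2,\,\varepsilon{+}2\|u_0\|_{L^\infty(D)}]$; the truncated problem is uniformly parabolic with smooth compatible data, so classical quasilinear parabolic theory (e.g.\ Ladyzhenskaya--Solonnikov--Uraltseva, or Lieberman) provides a unique solution $u_\varepsilon\in C^{2,1}_{x,t}(\bar D{\times}[0,T^*])$, smooth in $D{\times}(0,T^*)$ by interior Schauder bootstrapping. Measurability of $\omega\mapsto u_\varepsilon$ and $\Filt$-adaptedness follow from continuous dependence of the solution on the drift path together with the adaptedness of $B^\varepsilon$, and once \eqref{eq:energyInequalityApprox} and \eqref{boundMaximumPrinciple} are available one concludes $u_\varepsilon\in\varepsilon{+}\Lsp^2_0(D)$.

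For \eqref{boundMaximumPrinciple} I would note that the constants $\varepsilon$ and $\varepsilon{+}\|u_0\|_{L^\infty(D)}$ are, respectively, a stationary sub- and a supersolution of the truncated equation whose initial and lateral data lie below, resp.\ above, those of $u_\varepsilon$; the comparison principle for the uniformly parabolic truncated problem then gives \eqref{boundMaximumPrinciple}. In particular $u_\varepsilon$ stays inside the interval on which the truncation is inactive, hence is a classical solution of the original problem \eqref{eq:densityPME2}--\eqref{eq:densityPMElateral2}, with uniqueness inherited from the truncated equation.

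For the energy inequality \eqref{eq:energyInequalityApprox} I would multiply the equation by $u_\varepsilon$ and integrate over $D$: the diffusion term contributes $-\int_D mu_\varepsilon^{m-1}|\nabla u_\varepsilon|^2\dx+\varepsilon\int_{\partial D}(\nabla u_\varepsilon^m\cdot\n)\dS$, and the boundary term is $\le 0$ because $u_\varepsilon^m$ attains its minimum over $\bar D$ on $\partial D$, while the transport term $\tfrac\nu2\int_D\dot B^\varepsilon_t\cdot\nabla(u_\varepsilon^2)\dx$ vanishes since $\dot B^\varepsilon_t$ is constant in space ($\nabla\cdot\dot B^\varepsilon_t=0$ and $\int_{\partial D}\n\dS=0$); thus $t\mapsto\int_D\tfrac12 u_\varepsilon^2(t)\dx+\int_0^t\!\int_D mu_\varepsilon^{m-1}|\nabla u_\varepsilon|^2\dx\ds$ is non-increasing, which yields \eqref{eq:energyInequalityApprox}. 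For the weighted estimates I would test the equation against $t\,\partial_t u_\varepsilon^m$ and integrate by parts in space---the boundary terms dropping out because $\partial_t u_\varepsilon^m=0$ on $\partial D$, the lateral datum being time-independent---to obtain, after integration over $[0,T]$,
\begin{align*}
&\int_0^T\!\!\int_D t(\partial_t u_\varepsilon)(\partial_t u_\varepsilon^m)\dx\dt+\frac{T}{2}\int_D|\nabla u_\varepsilon^m|^2(T)\dx\\
&\qquad=\frac12\int_0^T\!\!\int_D|\nabla u_\varepsilon^m|^2\dx\dt+\nu\int_0^T\!\!\int_D t\,(\partial_t u_\varepsilon^m)(\dot B^\varepsilon_t\cdot\nabla u_\varepsilon)\dx\dt.
\end{align*}
Here $\int_0^T\!\int_D|\nabla u_\varepsilon^m|^2=\int_0^T\!\int_D(mu_\varepsilon^{m-1})\big(mu_\varepsilon^{m-1}|\nabla u_\varepsilon|^2\big)$ is bounded by $C(T^*,u_0)$ via \eqref{boundMaximumPrinciple} and \eqref{eq:energyInequalityApprox}, and Young's inequality splits the transport term into a small multiple of $\int_0^T\!\int_D t(\partial_t u_\varepsilon)(\partial_t u_\varepsilon^m)$---absorbed on the left---plus $C\,\|\dot B^\varepsilon\|_{C^0([0,T^*])}^2\int_0^{T^*}\!\int_D mu_\varepsilon^{m-1}|\nabla u_\varepsilon|^2$. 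A direct estimate from \eqref{eq:HoelderContBM} (using that $\rho_\varepsilon'$ integrates to zero and $B_0=0$) gives $\|\dot B^\varepsilon\|_{C^0([0,T^*])}\le C\,\mathcal{C}_\alpha\,\varepsilon^{\alpha-1}$ with $\E\,\mathcal{C}_\alpha^2<\infty$; taking expectations and invoking \eqref{eq:energyInequalityApprox} yields \eqref{eq:boundTimeDerivativeApprox}, e.g.\ with $\beta=2{-}2\alpha$. Finally \eqref{eq:boundLaplacianApprox} follows by reading $\Delta u_\varepsilon^m=\partial_t u_\varepsilon-\nu\dot B^\varepsilon_t\cdot\nabla u_\varepsilon$ off the equation, squaring, weighting by $t$, integrating, and bounding the two resulting terms by \eqref{eq:boundTimeDerivativeApprox} and \eqref{eq:energyInequalityApprox}, where $u_\varepsilon\ge\varepsilon$ is used to trade the $u_\varepsilon^{m-1}$-weight---at the cost of enlarging $\beta$ by $m{-}1$.

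I expect the main obstacle to be the first step: extracting the precise classical well-posedness and regularity---merely $C^{2,1}$ up to the $C^2$ boundary, but $C^\infty$ in the interior---from the quasilinear parabolic literature, together with the truncation device handling the non-degeneracy and its interplay with the comparison principle. The a priori estimates are then fairly standard porous-medium-type energy computations; the only mildly delicate points are the bookkeeping of the boundary terms (which here all vanish or carry the favourable sign, owing to the spatial constancy of $\dot B^\varepsilon_t$ and the time-independence of the lateral datum), the tracking of the negative powers of $\varepsilon$, and a routine cut-off / time-localization argument---exploiting interior smoothness---to justify the integrations by parts underlying the weighted estimates.
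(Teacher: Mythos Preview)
Your proposal is correct and follows essentially the same route as the paper: the truncation of $r\mapsto mr^{m-1}$ to make the problem uniformly parabolic, classical quasilinear theory (Ladyzhenskaya--Solonnikov--Uraltseva) for existence and regularity, the maximum principle for \eqref{boundMaximumPrinciple}, then the same energy testing procedures for \eqref{eq:energyInequalityApprox}--\eqref{eq:boundLaplacianApprox}. The only cosmetic difference is in bounding $\E\|\dot B^\varepsilon\|_{C^0}^2$: you invoke the pathwise H\"older bound \eqref{eq:HoelderContBM} and $\E\,\mathcal{C}_\alpha^2<\infty$, whereas the paper appeals directly to Doob's maximal inequality; both yield an $\varepsilon^{-\beta}$ factor and are equivalent for the purpose at hand.
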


A key ingredient for the rigorous justification of the comparison argument outlined
at the beginning of this section is that the Wong--Zakai type approximations $u_\varepsilon$ 
from Lemma~\ref{lem:classicalSolution} recover the unique weak solution $u$ to the Cauchy--Dirichlet 
problem \eqref{eq:SPME}--\eqref{eq:lateralBC} in a certain pointwise sense. We obtain this
by establishing a sort of contraction principle for the sequence of $u_\varepsilon$ by means of 
Kru\v{z}kov's device of variable doubling~\cite{Kruzkov1970}. To this end, the proof loosely
follows the strategy of Dareiotis, Gerencs\'{e}r and Gess~\cite{Dareiotis:2018} or 
Dareiotis and Gess~\cite{Dareiotis:2019} who study entropy solutions for stochastic porous medium type equations
posed on the torus. The Cauchy--Dirichlet problem in the framework of entropy solutions
was recently studied in the work of Dareiotis, Gess and Tsatsoulis~\cite{Dareiotis:2019a}. 
However, they do not consider the case of gradient type noise. Let us also mention the 
recent work of Fehrman and Gess~\cite{Fehrman:2019}, who prove rough path well-posedness 
for nonlinear gradient type noise using a kinetic formulation which 
in particular implies Wong-Zakai type results, but with the equation being posed on the torus.. 

As we are concerned in this work with the Cauchy--Dirichlet problem for the SPME with transport noise,
and roughly speaking aim to establish $L^1$ convergence for the Wong--Zakai type approximations $u_\varepsilon$,
the actual implementation of the doubling of variables technique differs in some aspects to the above mentioned works.
First, since two different solutions $u_\varepsilon$ and $u_{\hat\varepsilon}$ are advected at different
speeds, it seems to be natural to introduce for our purpose a (time-dependent, random) shift function and 
to compare the solutions only after shifting, i.e., we are led to study $L^1$ convergence for the sequence
\begin{align*}
u^\leftarrow_\varepsilon(x,t) := u_\varepsilon(x{+}\nu(B_t{-}B^\varepsilon_t),t).
\end{align*}
Second, since the introduction of this 
shift in turn changes the domain on which the equation for $u^\leftarrow_\varepsilon$ is posed 
it is necessary to create a boundary layer in order to apply the doubling of variables
method up to the boundary. This is done by means of an additional truncation as follows.   

Let $\kappa>0$ and $\delta\in (0,\frac{\kappa}{2})$ be fixed. We then 
choose a smooth and convex map $\zeta\colon\R\to [0,\infty)$
such that $\zeta(r)=0$ for $r\leq 0$, $\zeta(r)=r-1$ for
$r\geq 2$ and $\zeta(r)\leq r\vee 0$ for all $r\in\R$. 
Define $\zeta_\delta(r):=\delta\zeta(\frac{r}{\delta})$.
There is a constant $C>0$ independent of $\delta$ such that
\begin{align}
\label{eq:auxUniformBound1}
&\sup_{r\in\R} |(\zeta_\delta)'(r)| + |r|(\zeta_\delta)''(r) \leq C,
\\\label{eq:auxConvergence1}
&(\zeta_\delta)'(r) \to \mathrm{sign}_+(r) \text{ and }
\zeta_\delta(r) \nearrow r_+ := \max\{r,0\} \text{ as } \delta\to 0,
\\\label{auxConvergence21}
&|\zeta_\delta(r){-}r_+|\leq C\delta\text{ for all } r\in\R,\text{ and }
\zeta_\delta(r) = r-\delta\text{ for all } r\geq 2\delta.
\end{align}

Let $\zeta^\delta_\kappa(r):= \kappa + \zeta_\delta(r{-}\kappa)$.
The idea then is to study basically $L^1$ contraction for the sequence $\zeta^\varepsilon_\kappa\circ u^{\leftarrow}_\varepsilon$
for $\varepsilon\leq\frac{\kappa}{2}$. Note that for each time slice $t\in [0,T^*]$ the function 
$\zeta^\varepsilon_\kappa\circ u^{\leftarrow}_\varepsilon(\cdot,t)$ is $C^\infty$ in the 
open domain $D-\nu(B_t{-}B_t^\varepsilon)$, and for
$\varepsilon\leq\frac{\kappa}{2}$ it is actually constant in a neighborhood of $\partial D-\nu(B_t{-}B_t^\varepsilon)$.
In particular, by introducing such a boundary layer there is no jump of the Neumann data for 
$\zeta^\varepsilon_\kappa\circ u^{\leftarrow}_\varepsilon$ across the boundary
$\partial D-\nu(B_t{-}B_t^\varepsilon)$. This turns out to be absolutely essential in order to 
apply the doubling of variables technique up to the boundary.

\begin{proposition}[Contraction principle up to time-dependent shift for truncated
Wong--Zakai type approximations]
\label{prop:L1contraction}
For each $\varepsilon > 0$ let $u_\varepsilon$ denote the unique classical solution to 
\emph{\eqref{eq:densityPME2}--\eqref{eq:densityPMElateral2}} in the sense of 
Lemma~\ref{lem:classicalSolution}. We extend $u_\varepsilon$ to a function defined on $\R^d\times [0,T^*]$
by setting it equal to $\varepsilon$ outside of $D\times [0,T^*]$. Denoting this
extension again by $u_\varepsilon$ we then define for all $\varepsilon >0$ the shifted densities
\begin{align}\label{eq:shift}
u^\leftarrow_\varepsilon(x,t) := u_\varepsilon(x{+}\nu(B_t{-}B^\varepsilon_t),t),
\quad (x,t)\in \R^d\times [0,T^*].
\end{align}

Then for all $\kappa\in (0,T^*\wedge 1)$ and all compact sets $K\subset D$ 
there exists a constant $\bar C>0$ independent of $\kappa$ and $K$, some $\vartheta>0$ 
and some $\varepsilon_0=\varepsilon_0(\kappa,K)$ such that for all 
$\varepsilon,\hat\varepsilon\leq\varepsilon_0$ it holds
\begin{equation}
\begin{aligned}
\label{eq:L1contraction}
&\sup_{T\in [\kappa,T^*]} \E\int_K 
|\kappa\vee u^\leftarrow_\varepsilon(x,T){-}\kappa\vee u^\leftarrow_{\hat\varepsilon}(x,T)|\dx
\\&
\leq \bar C(\varepsilon\vee\hat\varepsilon)^\vartheta + \bar C\kappa
+\E\int_D |\kappa\vee(u_0(x){+}\varepsilon){-}\kappa\vee(u_0(x){+}\hat\varepsilon)|\dx.
\end{aligned}
\end{equation}
\end{proposition}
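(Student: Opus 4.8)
The plan is to establish \eqref{eq:L1contraction} by Kru\v{z}kov's doubling of variables applied to the truncated, shifted densities $w_\varepsilon := \zeta^\varepsilon_\kappa\circ u^\leftarrow_\varepsilon$ (with the final passage $\delta\to 0$ recovering $\kappa\vee u^\leftarrow_\varepsilon$ by \eqref{eq:auxConvergence1}). First I would derive the PDE satisfied by $w_\varepsilon$: since $u_\varepsilon$ solves the classical PME with drift $\nu\nabla u_\varepsilon\cdot\dot B^\varepsilon_t$ and $u^\leftarrow_\varepsilon(x,t)=u_\varepsilon(x+\nu(B_t-B^\varepsilon_t),t)$, the chain rule shows the $\dot B^\varepsilon$ transport term is exactly cancelled and $u^\leftarrow_\varepsilon$ picks up instead a transport term $\nu\dot B_t\cdot\nabla u^\leftarrow_\varepsilon$ — but $\dot B_t$ does not exist, so this must be kept in Stratonovich/It\^o form. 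Concretely $u^\leftarrow_\varepsilon$ solves $\mathrm{d}u^\leftarrow_\varepsilon = \Delta (u^\leftarrow_\varepsilon)^m\,\mathrm{d}t - \frac12\nu^2\Delta u^\leftarrow_\varepsilon\,\mathrm{d}t + \nu\nabla u^\leftarrow_\varepsilon\cdot\mathrm{d}B_t$ on the moving domain $D-\nu(B_t-B^\varepsilon_t)$ (on $K$, for $\varepsilon_0$ small the shift is $\le\delta$ by \eqref{eq:supportAfterShift} so $K$ stays inside this domain), and by convexity of $\zeta^\varepsilon_\kappa$ together with the structure of the porous-medium term, $w_\varepsilon$ is a \emph{subsolution}: testing against $\phi\ge 0$,
\begin{align*}
\mathrm{d}\!\int w_\varepsilon\phi \le \int \Big((\text{flux})\cdot\nabla\phi + (\text{It\^o correction})\phi\Big)\,\mathrm{d}t + \nu\int w_\varepsilon\nabla\phi\cdot\mathrm{d}B_t + (\text{boundary-layer error}),
\end{align*}
where the boundary-layer error is controlled because $w_\varepsilon$ is constant near $\partial D-\nu(B_t-B^\varepsilon_t)$ for $\varepsilon\le\kappa/2$, so no Neumann-data jump crosses the boundary.

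Next I would double the variables in space: work with $w_\varepsilon(x,t)$ and $w_{\hat\varepsilon}(y,t)$, test the difference of their equations against $\varrho_\eta(x-y)\psi(x)$ for a spatial mollifier $\varrho_\eta$ and a cutoff $\psi$ supported in a slightly larger compact set than $K$, use the entropy flux $q(a,b)=\mathrm{sign}_+(a-b)\big(a^m\vee\text{(truncated)}-b^m\big)$-type pairing adapted to the $\kappa$-truncation, and exploit that the second-order PME term contributes a non-positive parabolic-defect term after the Kru\v{z}kov pairing (this is where the truncations $\kappa$ and $\zeta_\delta$ pay off: they keep the nonlinearity Lipschitz away from $0$, so the entropy dissipation estimate of \cite{Dareiotis:2018} goes through). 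The transport terms $\nu\nabla w_\varepsilon\cdot\mathrm{d}B_t$ and $\nu\nabla w_{\hat\varepsilon}\cdot\mathrm{d}B_t$ are driven by the \emph{same} Brownian motion $B$, so in the pairing they combine into $\nu(\nabla_x+\nabla_y)$ acting on $\varrho_\eta(x-y)$, which vanishes — this is the whole point of shifting both densities back by $\nu(B_t - B^{\bullet}_t)$, and it removes the stochastic integral entirely, leaving a genuinely deterministic-looking inequality after taking $\E$. The $\frac12\nu^2\Delta$ It\^o corrections similarly combine and can be absorbed into the parabolic defect or estimated by the same mechanism.

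Then I would send $\eta\to 0$ (variable-doubling limit), then $\delta\to 0$ (recovering $\kappa\vee(\cdot)$ from $\zeta^\bullet_\kappa\circ(\cdot)$ via \eqref{eq:auxUniformBound1}--\eqref{auxConvergence21}; the $C\delta$-errors are harmless and ultimately hidden in $\bar C\kappa$ or the $(\varepsilon\vee\hat\varepsilon)^\vartheta$ term since $\delta=\delta(\varepsilon,\hat\varepsilon)$ can be taken to vanish as $\varepsilon,\hat\varepsilon\to 0$), and finally integrate the resulting differential inequality in time from $0$ to $T$. The initial-data term produces $\E\int_D|\kappa\vee(u_0+\varepsilon)-\kappa\vee(u_0+\hat\varepsilon)|\dx$; the remaining error terms are: (i) the discrepancy between $u^\leftarrow_\varepsilon$ and $u^\leftarrow_{\hat\varepsilon}$ being solutions on \emph{different} moving domains $D-\nu(B_t-B^\varepsilon_t)$ vs.\ $D-\nu(B_t-B^{\hat\varepsilon}_t)$, whose symmetric difference has width $\le\nu|B^\varepsilon_t-B^{\hat\varepsilon}_t|\le\nu(|B-B^\varepsilon|+|B-B^{\hat\varepsilon}|)\le C(\varepsilon\vee\hat\varepsilon)^{\vartheta}$ on $\{\mathcal C_\alpha\le M\}$ by \eqref{eq:supportAfterShift} (here $\vartheta$ comes from how $\varepsilon'(M,\delta)$ in \eqref{eq:supportAfterShift} scales, i.e.\ from the $\alpha$-H\"older modulus, roughly $\vartheta\sim\alpha$), and on the complement one uses the $L^\infty$ bound \eqref{boundMaximumPrinciple} together with a tail estimate on $\mathcal C_\alpha$ (square-integrable) with $M$ chosen appropriately in terms of $\varepsilon$; (ii) the boundary-layer errors of size $O(\kappa)$ coming from the region where the truncation is active, giving the $\bar C\kappa$ term; (iii) Gr\"onwall-type contributions from the non-Lipschitz behaviour of $a\mapsto a^m$ near the truncation level, again $O(\kappa)$.

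The main obstacle I expect is step (i): making the comparison of two solutions living on two different time-dependent domains rigorous within the doubling-of-variables computation, i.e.\ controlling the flux and entropy terms near the two distinct moving lateral boundaries and showing the mismatch is genuinely of order $(\varepsilon\vee\hat\varepsilon)^\vartheta$ uniformly in $T\in[\kappa,T^*]$. This is precisely what forces both the time-dependent shift \emph{and} the $\zeta^\varepsilon_\kappa$-boundary layer into the construction — without the layer, the Neumann data of $u^\leftarrow_\varepsilon$ jumps across $\partial D-\nu(B_t-B^\varepsilon_t)$ and the up-to-the-boundary Kru\v{z}kov argument breaks; with it, one only ever pairs functions that are locally constant near the respective boundaries, and the boundary contributions collapse to the controllable $O(\kappa)$ and $O((\varepsilon\vee\hat\varepsilon)^\vartheta)$ errors. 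A secondary technical point is bookkeeping the interlocking smallness parameters $\eta\to 0$, $\delta\to 0$, $\varepsilon_0(\kappa,K)\to 0$ and the choice of $M=M(\varepsilon)$ so that the exceptional-set contribution is absorbed — this is routine but needs care to ensure $\bar C$ and $\vartheta$ end up independent of $\kappa$ and $K$ as claimed.
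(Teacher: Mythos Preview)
Your overall plan---Kru\v{z}kov doubling applied to the truncated shifted densities, exploiting that both are driven by the same Brownian motion---matches the paper's strategy. But there are two concrete places where your sketch diverges from what the paper actually does, and in each case your version is missing an ingredient.

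First, you double only the spatial variables and work with $w_\varepsilon(x,t)$ and $w_{\hat\varepsilon}(y,t)$ at the \emph{same} time $t$, claiming the noise terms ``combine into $\nu(\nabla_x+\nabla_y)$ acting on $\varrho_\eta(x-y)$, which vanishes.'' The paper instead doubles \emph{both} space and time, pairing $v_\varepsilon(x,t)$ with $v_{\hat\varepsilon}(y,s)$ through a forward-supported mollifier $\rho_\tau(t-s)$; this guarantees $v_{\hat\varepsilon}(y,s)$ is $\mathcal{F}_s$-measurable for $s<t$, so that after averaging in $s$ and taking expectation the $\mathrm{d}B_t$-integrals have zero mean (Lemma~\ref{lemma:limitTimeVariable}). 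Your same-time route \emph{can} be made to work, but it forces you to apply It\^o's formula to $\eta_\delta(w_\varepsilon(x,t)-w_{\hat\varepsilon}(y,t))$ jointly, which produces an extra cross-variation term $\tfrac{1}{2}\nu^2\eta_\delta''|\nabla_x w_\varepsilon-\nabla_y w_{\hat\varepsilon}|^2\,\mathrm{d}t$ that you never mention. Moreover, your equation for $u^\leftarrow_\varepsilon$ has the wrong sign on the It\^o correction: it is $+\tfrac{1}{2}\nu^2\Delta u^\leftarrow_\varepsilon$, not $-\tfrac{1}{2}\nu^2\Delta u^\leftarrow_\varepsilon$ (see \eqref{eq:weakFormShift5}). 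With the correct sign, the drift corrections and the cross-variation \emph{do} combine into the manageable structure $R_{\mathrm{corr}}$ of the paper, but with your sign they would not, so this is not a harmless slip.

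Second, the paper does not use a generic cutoff $\psi$ supported on a neighbourhood of $K$. It uses a carefully constructed boundary-layer function $\bar\xi=\bar\xi_h\in H^1_0(D)$ satisfying $\int_D\nabla\phi\cdot\nabla\bar\xi\,\mathrm{d}x\geq 0$ for every $\phi\in H^1_0(D;[0,\infty))$ (the subharmonic-type condition from \cite{Andreianov:2007}). This sign condition is exactly what allows one, after undoing the spatial doubling, to integrate by parts and \emph{discard} the porous-medium and correction contributions near $\partial D$ (Lemma~\ref{lemma:estimateDiffusionCorrection}): one gets $-\int\nabla|v_\varepsilon^m-v_{\hat\varepsilon}^m|\cdot\nabla\bar\xi\leq 0$. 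A generic cutoff gives no such sign, and without it you have no way to close the estimate up to the boundary---the $O(\kappa)$ control you sketch in item (ii) does not come for free. The remaining error bookkeeping (your items (i) and (iii)) is handled in the paper not through a tail estimate on $\mathcal{C}_\alpha$, but by fine-tuning the mollification scales $\theta,l$ and a power $q$ in $\zeta^{\varepsilon^q}_\kappa$ as explicit powers of $\varepsilon\vee\hat\varepsilon$; this is how the exponent $\vartheta$ and the $\bar C\kappa$ term actually emerge (see \eqref{eq:intermediateSummary11} and the closing paragraph of Subsection~\ref{subsec:proofContraction}).
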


We may lift this quantitative stability estimate for the truncated and shifted densities 
$\kappa\vee u^\leftarrow_\varepsilon$ to qualitative $L^1$ convergence of the
shifted densities $u^\leftarrow_\varepsilon$.

\begin{corollary}
\label{cor:L1convergence}
Let the assumptions and notation of Proposition~\ref{prop:L1contraction} be in place.
Then the sequence of shifted densities $u_\varepsilon^\leftarrow$
is Cauchy in $C([\tau,T^*];L^1(\Omega{\times}D,\Prob\otimes\mathcal{L}^d))$
for all positive times $\tau \in (0,T^*)$. The sequence of shifted densities is 
moreover Cauchy in $L^1([0,T^*];L^1(\Omega{\times}D,\Prob\otimes\mathcal{L}^d))$.
Let $u\in L^1([0,T^*];L^1(\Omega{\times}D,\Prob\otimes\mathcal{L}^d))$ 
denote the corresponding limit in this space. Then, $u$ is also the weak limit
of the Wong--Zakai type approximations $u_\varepsilon$ from Lemma~\ref{lem:classicalSolution}
in the space $L^{m+1}(\Omega_{T^*},\mathcal{P}_{T^*};L^{m+1}(D))$
\end{corollary}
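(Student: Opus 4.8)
The plan is to prove Corollary~\ref{cor:L1convergence} in three stages: first upgrade the quantitative estimate \eqref{eq:L1contraction} into a genuine Cauchy property for $u^\leftarrow_\varepsilon$ in $C([\tau,T^*];L^1(\Omega{\times}D))$; second, extend this to an $L^1$-in-time Cauchy property on all of $[0,T^*]$; and third, identify the resulting $L^1$-limit with the weak $L^{m+1}$-limit of the unshifted $u_\varepsilon$.

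Stage one. Fix $\tau\in(0,T^*)$ and an arbitrary $\eta>0$. I would choose $\kappa\in(0,T^*\wedge 1)$ small enough that $\bar C\kappa<\eta/3$ and also $\kappa<\tau$. Exhausting $D$ by compact sets $K$, first handle the tail: since $u^\leftarrow_\varepsilon$ is uniformly bounded (by \eqref{boundMaximumPrinciple}, after the shift) and $D$ has finite measure, the contribution of $D\setminus K$ to the $L^1$ norm can be made $<\eta/3$ uniformly in $\varepsilon$ by taking $K$ large. On $K$, apply \eqref{eq:L1contraction}: for $\varepsilon,\hat\varepsilon\le\varepsilon_0(\kappa,K)$ the right-hand side is $\bar C(\varepsilon\vee\hat\varepsilon)^\vartheta+\bar C\kappa+\E\int_D|\kappa\vee(u_0{+}\varepsilon)-\kappa\vee(u_0{+}\hat\varepsilon)|\dx$. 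The last term is $\le\mathcal{L}^d(D)|\varepsilon-\hat\varepsilon|$ since $r\mapsto\kappa\vee r$ is $1$-Lipschitz, hence $\to 0$. So for $\varepsilon,\hat\varepsilon$ small this term plus $\bar C(\varepsilon\vee\hat\varepsilon)^\vartheta$ is $<\eta/3$. To pass from $\kappa\vee u^\leftarrow_\varepsilon$ back to $u^\leftarrow_\varepsilon$ itself I use the pointwise inequality $|u^\leftarrow_\varepsilon-u^\leftarrow_{\hat\varepsilon}|\le|\kappa\vee u^\leftarrow_\varepsilon-\kappa\vee u^\leftarrow_{\hat\varepsilon}|+2\kappa$, valid because the two functions can differ from their $\kappa$-truncations only on the set where one of them is below $\kappa$. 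Integrating and taking the supremum over $T\in[\tau,T^*]$, and absorbing the extra $2\kappa\mathcal{L}^d(D)$ into the choice of $\kappa$ at the outset, gives $\sup_{T\in[\tau,T^*]}\E\int_D|u^\leftarrow_\varepsilon(\cdot,T)-u^\leftarrow_{\hat\varepsilon}(\cdot,T)|\dx<\eta$. Continuity in $T$ of each $u^\leftarrow_\varepsilon$ as an $L^1(\Omega{\times}D)$-valued map follows from the classical regularity in Lemma~\ref{lem:classicalSolution} together with continuity of $t\mapsto B_t-B^\varepsilon_t$, so the sequence is Cauchy in the Banach space $C([\tau,T^*];L^1(\Omega{\times}D))$.

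Stage two. For the $L^1([0,T^*];L^1(\Omega{\times}D))$ statement, split $\int_0^{T^*}=\int_0^\tau+\int_\tau^{T^*}$. On $[\tau,T^*]$ use stage one. On $[0,\tau]$ the uniform $L^\infty$ bound \eqref{boundMaximumPrinciple} gives $\int_0^\tau\E\int_D|u^\leftarrow_\varepsilon-u^\leftarrow_{\hat\varepsilon}|\dx\dt\le\tau\,\mathcal{L}^d(D)(2\|u_0\|_{L^\infty}+\varepsilon+\hat\varepsilon)$, which is small for small $\tau$. A standard $\varepsilon/2$ argument (choose $\tau$ first, then $\varepsilon_0$) yields the Cauchy property on the full interval, and we let $u\in L^1([0,T^*];L^1(\Omega{\times}D))$ be the limit.

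Stage three — identifying the limit. This is the step I expect to be the main obstacle, because the $L^1$-limit is of the \emph{shifted} densities while the weak $L^{m+1}$-limit is of the \emph{unshifted} ones. The a priori estimate \eqref{eq:energyInequalityApprox} bounds $u_\varepsilon$ uniformly in $L^2(\Omega_{T^*};H^1_0(D))$ (after subtracting $\varepsilon$), hence also in $L^{m+1}(\Omega_{T^*};L^{m+1}(D))$ once combined with the $L^\infty$ bound; so along a subsequence $u_\varepsilon\rightharpoonup \tilde u$ weakly in $L^{m+1}(\Omega_{T^*};L^{m+1}(D))$. I would then undo the shift: since $u^\leftarrow_\varepsilon(x,t)=u_\varepsilon(x{+}\nu(B_t{-}B^\varepsilon_t),t)$ and, by \eqref{eq:uniformConvergenceDyadicApproxiamtion}, the shift amount $\nu(B_t-B^\varepsilon_t)\to 0$ uniformly on $[0,T^*]$ almost surely, continuity of translation in $L^1$ together with the uniform bound gives that $u^\leftarrow_\varepsilon$ and $u_\varepsilon$ have the same $L^1$-limit; more precisely, for any test function $\phi\in C^\infty_{\mathrm{cpt}}(D\times(0,T^*))$ one has $\E\int_0^{T^*}\!\int_D(u^\leftarrow_\varepsilon-u_\varepsilon)\phi\,\dx\dt\to 0$ by writing it as $\E\int\!\int u_\varepsilon\,(\phi(x{-}\nu(B_t{-}B^\varepsilon_t),t)-\phi(x,t))\dx\dt$ and using uniform continuity of $\phi$ and the uniform bound on $u_\varepsilon$. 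Hence $\tilde u=u$ as distributions, so the full sequence $u_\varepsilon$ converges weakly in $L^{m+1}(\Omega_{T^*};L^{m+1}(D))$ to $u$ (the limit being independent of the subsequence). One mild care point: the test functions for the $L^1$ convergence of $u^\leftarrow_\varepsilon$ must be taken compactly supported in $D\times(0,T^*)$ so that the perturbed argument $x-\nu(B_t-B^\varepsilon_t)$ stays inside $D$ for $\varepsilon$ small, which is exactly the regime where the shifted densities are the genuine (extended-by-$\varepsilon$) solutions; this is harmless since such functions are dense enough to identify an $L^{m+1}$ weak limit against the uniform bound.
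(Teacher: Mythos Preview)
Your argument is correct and follows essentially the same three-stage route as the paper: the decomposition via the $\kappa$-truncation and compact exhaustion in stage one, the $[0,\tau]$ versus $[\tau,T^*]$ split in stage two, and the ``undo the shift by testing against smooth $\phi$'' computation in stage three all match the paper's proof. One small point to tighten in stage three: to conclude $\tilde u=u$ as elements of $L^{m+1}(\Omega_{T^*},\mathcal{P}_{T^*};L^{m+1}(D))$ you must test not only against deterministic $\phi\in C^\infty_{\mathrm{cpt}}(D\times(0,T^*))$ but also against random factors $\chi_A$ with $A\in\mathcal{F}_{T^*}$ (deterministic test functions alone are not dense in the dual of a space of random functions); the paper does this explicitly, and your change-of-variables estimate extends verbatim since $\chi_A$ is just a bounded random multiplier.
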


The final issue concerning the Wong--Zakai type approximations $u_\varepsilon$
from Lemma~\ref{lem:classicalSolution} is the identification of the limit object 
$u$ in Corollary~\ref{cor:L1convergence} as the unique weak solution to the Cauchy--Dirichlet 
problem \eqref{eq:SPME}--\eqref{eq:lateralBC} of the stochastic porous medium equation.

\begin{proposition}
\label{prop:weakSolutionByWongZakaiApproximation}
Let $u\in L^{m+1}(\Omega_{T^*},\mathcal{P}_{T^*};L^{m+1}(D))$ 
be the limit of the Wong--Zakai approximations $u_\varepsilon$ in the sense of Corollary~\ref{cor:L1convergence}.
Then it holds that $u\in\Hsp^{-1}_{m+1}(D)$, and $u$ is the unique weak solution 
to the Cauchy--Dirichlet problem \emph{\eqref{eq:SPME}--\eqref{eq:lateralBC}} 
with initial density $u_0\in C^\infty_{\mathrm{cpt}}(D;[0,\infty))$
in the sense of Definition~\ref{def:weakSolutions}. Moreover, $u$ satisfies the bounds
$0 \leq u \leq \|u_0\|_{L^\infty(D)}$.
\end{proposition}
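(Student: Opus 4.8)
The plan is to pass to the limit $\varepsilon\to 0$ in the classical (hence, after testing, weak) formulation of the regularized problem \eqref{eq:densityPME2}--\eqref{eq:densityPMElateral2} and to identify the limit with the unique weak solution of \eqref{eq:SPME}--\eqref{eq:lateralBC}. First I would record that, by Lemma~\ref{lem:classicalSolution}, each $u_\varepsilon$ is a classical solution, so testing \eqref{eq:densityPME2} against an arbitrary $\phi\in C^\infty_{\mathrm{cpt}}(D)$ and integrating by parts twice in space gives, pathwise and for every $T\in(0,T^*)$,
\begin{equation*}
\int_D u_\varepsilon(x,T)\phi(x)\dx - \int_D (u_0(x){+}\varepsilon)\phi(x)\dx
= \int_0^T\!\!\int_D u_\varepsilon^m\Delta\phi \dx\dt
+ \nu\int_0^T\!\!\int_D u_\varepsilon\nabla\phi \dx\cdot\mathrm{d}B^\varepsilon_t,
\end{equation*}
where the last term is a genuine Riemann--Stieltjes integral against the smooth path $B^\varepsilon$; rewriting $\mathrm{d}B^\varepsilon_t = \dot B^\varepsilon_t\dt$ and integrating by parts in time converts it into a form amenable to a Wong--Zakai passage. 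The bound \eqref{boundMaximumPrinciple} gives $\varepsilon\le u_\varepsilon\le\varepsilon+\|u_0\|_{L^\infty(D)}$ uniformly, hence $(u_\varepsilon)$ is bounded in $L^\infty(\Omega_{T^*};L^\infty(D))$ and in particular in $L^{m+1}(\Omega_{T^*},\mathcal{P}_{T^*};L^{m+1}(D))$; by Corollary~\ref{cor:L1convergence} the shifted densities $u_\varepsilon^\leftarrow$ converge in $L^1([0,T^*];L^1(\Omega\times D))$ to some $u$, and the same corollary asserts $u$ is the weak $L^{m+1}$ limit of the unshifted $u_\varepsilon$. The uniform $L^\infty$ bound is inherited by $u$, giving $0\le u\le\|u_0\|_{L^\infty(D)}$.

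The core of the argument is the limit passage in the testing identity. For the deterministic terms this is routine: $\int_D (u_0{+}\varepsilon)\phi\to\int_D u_0\phi$, and because $u_\varepsilon\to u$ in $L^1(\Omega\times D\times[0,T^*])$ with a uniform $L^\infty$ bound, interpolation gives strong $L^p$ convergence for every $p<\infty$, hence $u_\varepsilon^m\to u^m$ in $L^1(\Omega\times D\times[0,T^*])$, so $\int_0^T\!\!\int_D u_\varepsilon^m\Delta\phi\to\int_0^T\!\!\int_D u^m\Delta\phi$ after taking expectations (and, if needed, along a subsequence and then by uniqueness of the limit, for the full sequence). The key difficulty is the stochastic integral: one must show
\begin{equation*}
\nu\int_0^T\!\!\int_D u_\varepsilon\nabla\phi\dx\cdot\mathrm{d}B^\varepsilon_t
\;\longrightarrow\;
\nu\int_0^T\!\!\int_D u\nabla\phi\dx\cdot\mathrm{d}B_t
+ \tfrac{1}{2}\nu^2\int_0^T\!\!\int_D u\,\Delta\phi\dx\dt,
\end{equation*}
i.e.\ that the Wong--Zakai correction produces exactly the Stratonovich-to-Itô term $\tfrac12\nu^2\int u\Delta\phi$ appearing in Definition~\ref{def:weakSolutions}. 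I would handle this by the standard route: set $F_\varepsilon(t):=\int_D u_\varepsilon(x,t)\nabla\phi(x)\dx$, integrate by parts in time to write $\int_0^T F_\varepsilon\cdot\dot B^\varepsilon_t\dt = F_\varepsilon(T)\cdot B^\varepsilon_T - \int_0^T B^\varepsilon_t\cdot \mathrm{d}F_\varepsilon(t)$, use the equation to express $\mathrm{d}F_\varepsilon$ in terms of $\Delta u_\varepsilon^m$ and $\dot B^\varepsilon$, and then exploit the a priori bounds \eqref{eq:energyInequalityApprox}--\eqref{eq:boundLaplacianApprox} together with \eqref{eq:uniformConvergenceDyadicApproxiamtion} and the strong $L^2$ convergence of $u_\varepsilon$. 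Alternatively — and this is likely cleaner given the structure already set up — I would avoid reproving a Wong--Zakai theorem from scratch: since \cite[Theorem 3.3]{Dareiotis:2017} gives existence of a unique weak solution $\tilde u$ to \eqref{eq:SPME}--\eqref{eq:lateralBC}, and since the identification of Wong--Zakai limits for this class of transport-noise SPDEs is exactly the content of the contraction machinery (Proposition~\ref{prop:L1contraction} and Corollary~\ref{cor:L1convergence} were engineered so that the limit does not depend on the mollifier), I would instead run the contraction estimate of Proposition~\ref{prop:L1contraction} with one of the two approximations replaced by a Wong--Zakai approximation of the \emph{known} weak solution $\tilde u$ (which is itself recoverable by such approximations via the stability theory in \cite{Dareiotis:2017}), concluding $u=\tilde u$ a.e.

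Concretely, the step I expect to be the main obstacle is establishing that the limit identity holds with the correct Stratonovich correction, because the convergence of $\int u_\varepsilon\nabla\phi\cdot\mathrm{d}B^\varepsilon$ is not a consequence of $L^1$ convergence of $u_\varepsilon$ alone — it requires control of the joint oscillations of $u_\varepsilon$ and $B^\varepsilon$. The clean way through is: (i) use the energy estimate \eqref{eq:energyInequalityApprox} to get a uniform bound on $u_\varepsilon$ in $L^2(\Omega;L^\infty_t L^2_x)\cap L^2(\Omega;L^2_tH^1_x)$-type norms (via $mu_\varepsilon^{m-1}\ge m\varepsilon^{m-1}$ this is only an $\varepsilon$-dependent $H^1$ bound, so instead one uses it directly on $u_\varepsilon$ and on $u_\varepsilon^m$ through \eqref{eq:boundTimeDerivativeApprox}); (ii) deduce, via the equation, an equicontinuity-in-time estimate for $t\mapsto F_\varepsilon(t)$ in expectation, uniformly in $\varepsilon$ up to the $\varepsilon^{-\beta}$ weights which are harmless once localized away from $t=0$ and then extended by the $L^\infty$ bound near $t=0$; (iii) combine with \eqref{eq:uniformConvergenceDyadicApproxiamtion} and a Wong--Zakai lemma for Young/Stratonovich integrals (e.g.\ of Ikeda--Watanabe type) to get the limit. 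I would present the argument in the form: pass to the limit in the weak formulation along an arbitrary $\phi\in C^\infty_{\mathrm{cpt}}(D)$, obtain that $u$ satisfies \eqref{eq:weakSol} for a.e.\ $T$, upgrade to all $T$ by continuity of $T\mapsto\int_D u(\cdot,T)\phi$ (which follows since $u\in\Hsp^{-1}_{m+1}(D)$, the latter membership being verified from the weak formulation itself by a standard argument bounding the $H^{-1}$-increments), and finally invoke uniqueness from \cite[Theorem 3.3]{Dareiotis:2017} to conclude $u$ is \emph{the} weak solution.
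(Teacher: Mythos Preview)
Your proposal misses the key structural trick that makes the limit passage work, and the route you sketch has a real gap. You propose to pass to the limit in the testing identity for the \emph{unshifted} $u_\varepsilon$, which forces you to prove a genuine Wong--Zakai convergence
\[
\nu\int_0^T\!\!\int_D u_\varepsilon\nabla\phi\dx\cdot\mathrm{d}B^\varepsilon_t
\;\longrightarrow\;
\nu\int_0^T\!\!\int_D u\nabla\phi\dx\cdot\mathrm{d}B_t
+\tfrac12\nu^2\int_0^T\!\!\int_D u\,\Delta\phi\dx\dt.
\]
The estimates you plan to use for this, namely \eqref{eq:boundTimeDerivativeApprox} and \eqref{eq:boundLaplacianApprox}, carry a factor $\varepsilon^{-\beta}$ and therefore give no uniform-in-$\varepsilon$ control on the time regularity of $F_\varepsilon(t)=\int_D u_\varepsilon\nabla\phi\dx$; your step (ii) does not close. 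Your alternative of running Proposition~\ref{prop:L1contraction} with one leg replaced by an approximation of the known weak solution $\tilde u$ is circular in the present setting: that proposition is proved for the \emph{classical} approximants $u_\varepsilon$, and you would need precisely the identification you are trying to establish to know that $\tilde u$ is recovered by such approximants.

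The paper avoids the Wong--Zakai limit altogether. It works with the \emph{shifted} densities $u_\varepsilon^\leftarrow(x,t)=u_\varepsilon(x+\nu(B_t-B^\varepsilon_t),t)$. Applying It\^o's formula to $\int_D u_\varepsilon(x,t)\phi(x-\nu(B_t-B^\varepsilon_t))\dx$ (equivalently, deriving the equation for $u_\varepsilon^\leftarrow$ as in the first step of the proof of Lemma~\ref{lemma:equations}) produces an identity in which the stochastic integral is already a true It\^o integral against $\mathrm{d}B_t$ and the Stratonovich correction $\tfrac12\nu^2\Delta$ is already present---the shift has absorbed the difference between $B^\varepsilon$ and $B$. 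After localizing to $\{\mathcal{C}_\alpha\le M\}$ so that the shifted test function $\phi_\varepsilon^\leftarrow$ stays compactly supported in $D$ (which permits integration by parts without boundary terms), one arrives at an identity of the exact form \eqref{eq:weakSol} but with $u_\varepsilon^\leftarrow$ in place of $u$. Now the limit $\varepsilon\to 0$ is straightforward: Corollary~\ref{cor:L1convergence} gives $u_\varepsilon^\leftarrow\to u$ in $C([\tau,T^*];L^1(\Omega\times D))$ and in $L^1([0,T^*];L^1(\Omega\times D))$, and the uniform bound \eqref{boundMaximumPrinciple} upgrades this to $L^q$ for every $q<\infty$, which handles the nonlinear term $u^m$ and (via Burkholder--Davis--Gundy) the It\^o integral. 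Letting $M\to\infty$ removes the localization. No Wong--Zakai theorem is invoked; the whole point of introducing the shift in Proposition~\ref{prop:L1contraction} was precisely to put the approximants into a form where the noise is already It\^o.
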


From now on we will restrict ourselves to the one-dimensional setting $d=1$. 
We still have to make rigorous the outlined comparison argument.
The key ingredient for this will be provided in Section~\ref{sec:maximalSubsolution}.
It consists of the construction of a \textit{maximal subsolution} $\bar p_{\mathrm{max}}$ (in the sense of 
viscosity theory~\cite{Crandall:1992}, precise definitions will follow in Section~\ref{sec:maximalSubsolution}) 
for the Cauchy--Dirichlet problem of the porous medium equation after stochastic flow transformation
\begin{align}
\label{eq:pressPMErough}
\partial_t \bar p &= (m{-}1)\bar p\partial_{xx} \bar p + |\partial_x \bar p|^2,
\quad (x,t) \in \bigcup_{t\in (0,T^*)}(\nu B_t{+}I)\times\{t\}, 
\\
\label{eq:pressPMEic} 
\bar p(x,0) &= \bar p_0(x),
\quad\hspace{2.98cm} x\in \bar{I}, 
\\
\label{eq:pressPMElateral} 
\bar p(x,t) &= 0, 
\quad\hspace{3.06cm} (x,t)\in \bigcup_{t\in (0,T^*]}(\nu B_t{+}\partial I)\times\{t\}.
\end{align}
The \textit{pressure variable} $p$ is obtained from the density variable $u$ via the transformation
$g(u)$ with $g\colon [0,\infty)\to [0,\infty)$ given by $g(r):=\frac{m}{m-1}r^{m-1}$. To the best
of the author's knowledge, the present work seems to be the first instance to make use of the pressure
formulation to study the stochastic porous medium equation after stochastic flow transformation in the
setting of viscosity theory.

The main difficulty for solving \eqref{eq:pressPMErough}--\eqref{eq:pressPMElateral}  
comes from the fact that the lateral boundary consists of two translates of a 
Brownian trajectory. To overcome the lack of regularity of the lateral boundary,
we first consider the approximate initial-boundary value problem (see Lemma~\ref{lem:subsolutionPressureApprox})
\begin{align}
\label{eq:pressPMErough2}
\partial_t \bar p_\varepsilon &= (m{-}1)\bar p_\varepsilon\partial_{xx} \bar p_\varepsilon 
+ |\partial_{x} \bar p_\varepsilon|^2,
\quad (x,t) \in \bigcup_{t\in (0,T^*)}(\nu B^\varepsilon_t{+}I)\times\{t\}, 
\\
\label{eq:pressPMEic2} 
\bar p_\varepsilon(x,0) &= \bar p_{0,\varepsilon}(x),
\quad\hspace{3.21cm} x\in\bar{I}, 
\\
\label{eq:pressPMElateral2} 
\bar p_\varepsilon(x,t) &= \frac{m}{m-1}\varepsilon^{m-1}, 
\quad\hspace{1.90cm} (x,t)\in  \bigcup_{t\in (0,T^*]}(\nu B^\varepsilon_t{+}\partial I)\times\{t\},
\end{align}
and then pass to the limit $\varepsilon\to 0$ by means of the technique of
semi-relaxed limits, see for instance~\cite[Section~6]{Crandall:1992}. 
In this way we obtain a maximal subsolution to the problem \eqref{eq:pressPMErough}--\eqref{eq:pressPMElateral} 
in the sense of viscosity theory~\cite{Crandall:1992}, see Proposition~\ref{prop:maxSubsolution}. 
The main motivation for working in the framework of viscosity theory is the
non-monotonicity of the underlying space-time domains $\bigcup_{t\in (0,T^*)}(\nu B^\varepsilon_t{+}I)\times\{t\}$
as $\varepsilon\to 0$ which necessitates the usage of a relaxed limit. In this way, however,
the interpretation of the lateral boundary condition \eqref{eq:pressPMElateral} in
a strong sense is lost in the limit. It is well-known that boundary regularity for solutions to 
stochastic PDEs with gradient type noise and given Dirichlet data proves to be a delicate issue. This
already shows up in the linear case, see, for instance, the works by Krylov \cite{Krylov:2003} and \cite{Krylov:2003a}.
For a recent work in the semilinear regime, we refer the reader to \cite{Gerencser:2019}.
On the other side, ``continuity up to the lateral boundary'' is inessential for our purposes and anyway not expected, 
if at all, to be obtained by the methods in this work.

Note that the maximal subsolution $\bar p_{\max}$ of course depends on the realization of Brownian motion, and
is therefore random. However, since the construction of $\bar p_{\mathrm{max}}$ is ultimately a purely 
deterministic consequence of the probabilistic facts 
\eqref{eq:uniformBoundDyadicApproximation}--\eqref{eq:supportAfterShift}, 
we obtain the maximal subsolution in a pathwise sense on a set of full probability.

The proof of Theorem~\ref{theorem:mainResult}, which is the content of 
Section~\ref{sec:proofMainResult}, then roughly speaking proceeds as follows.
Denoting by $p$ the map which we obtain from the unique weak solution $u$ of 
\eqref{eq:SPME}--\eqref{eq:lateralBC} in the sense of Definition~\ref{def:weakSolutions}
by first applying a stochastic flow transformation and then a density-to-pressure transformation,
we have the estimate $p\leq \bar p_{\mathrm{max}}$, see Proposition~\ref{prop:comparisonTransformedWeakSol}. 
This bound is essentially a combination of the following facts:
\begin{itemize}[leftmargin=0.7cm]
\item[i)] The solution to \eqref{eq:pressPMErough2}--\eqref{eq:pressPMElateral2} 
					may in fact be obtained from the Wong--Zakai type approximations $u_\varepsilon$ 
					of \eqref{eq:densityPME2}--\eqref{eq:densityPMElateral2}  
					by first applying a stochastic flow transformation and then a 
					density-to-pressure transformation.
\item[ii)] The Wong--Zakai approximations $u_\varepsilon$ (or more precisely, their
           shifted counterparts \eqref{eq:shift}) converge on each positive time slice in $L^1$ to
					 the unique weak solution of the Cauchy--Dirichlet problem \eqref{eq:SPME}--\eqref{eq:lateralBC},
					 see Corollary~\ref{cor:L1convergence}.
\item[iii)] The maximal subsolution to \eqref{eq:pressPMErough}--\eqref{eq:pressPMElateral}
						dominates the upper semi-relaxed limit (with respect to parabolic
						space-time cylinders) of the transformed $u_\varepsilon$. However,
						taking a semi-relaxed limit allows to compare with the transformed density $u$
						by means of the previous two items.
\end{itemize} 

Finally, we compare the maximal subsolution $\bar p_{\max}$ to a Barenblatt profile (written in
the pressure variable) as outlined in the heuristic argument. We make use of the comparison principle
in the framework of viscosity solutions for the deterministic porous medium equation
as developed by Caffarelli and V\'{a}zquez~\cite{Caffarelli:1999} resp.\
V\'{a}zquez and Br\"{a}ndle~\cite{Vazquez:2005}. The remaining argument after comparing with the Barenblatt profile, 
in particular the derivation of \eqref{eq:boundExtinctionTime}, then works as already sketched before.

\section{Recovering weak solutions by Wong--Zakai type approximations}
\label{sec:solutionTheory}

\subsection{Proof of Lemma~\ref{lem:classicalSolution} {\normalfont (Wong--Zakai type approximation)}}
We make use of a usual trick of avoiding the degeneracy of the porous medium operator,
see for instance~\cite[Proof of Theorem 5.5]{Vazquez:2007}. Let $\varepsilon > 0$ be fixed
and choose a bounded smooth function $a_\varepsilon\colon\R\to [m(\frac{\varepsilon}{2})^{m-1},\infty)$ such that
it holds $a_\varepsilon(r)=mr^{m-1}$ for all $r\in [\varepsilon,\varepsilon{+}\|u_0\|_{L^\infty(D)}]$.
By the choice of $a_\varepsilon$ and since $B^\varepsilon_t$ as defined in \eqref{eq:approxBrownianMotion} is
smooth on $[0,T^*]$ almost surely, we can make use of standard quasilinear theory~\cite{Ladyzhenskaya:1968} 
to solve the Cauchy--Dirichlet problem \eqref{eq:densityPME2}--\eqref{eq:densityPMElateral2} 
in a classical sense, i.e., we obtain with probability one a classical 
solution $u_\varepsilon\in C^{2,1}_{x,t}(\bar{D}{\times}[0,T^*])\cap C^\infty(D{\times}(0,T^*))$ such that
the equations \emph{\eqref{eq:densityPME2}--\eqref{eq:densityPMElateral2}} are satisfied pointwise everywhere.

We can infer from the maximum principle that $\varepsilon\leq u_\varepsilon(x,t)\leq\varepsilon+\|u_0\|_{L^\infty(D)}$ 
holds true for all $(x,t)\in \bar{D}{\times}[0,T^*]$ as it is asserted in \eqref{boundMaximumPrinciple}.
The derivation of the energy estimate \eqref{eq:energyInequalityApprox} is standard: multiply the 
equation with $u_\varepsilon$, integrate over $D$ and use the regularity of $u_\varepsilon$ to 
integrate by parts in the spatial differential operators. Note in this respect that as a consequence 
of \eqref{boundMaximumPrinciple} and \eqref{eq:densityPMElateral2} it holds $\n_{\partial D}\cdot \nabla u_\varepsilon^m\leq 0$ 
on $\partial D$, where $\n_{\partial D}$ is the exterior unit normal vector field along the $C^2$ manifold $\partial D$. 
This is the only reason for the inequality sign in \eqref{eq:energyInequalityApprox} as we may compute
for the second term
\begin{align*}
\int_D \frac{\mathrm{d}}{\mathrm{d}t}B^\varepsilon_t\cdot u_\varepsilon\nabla u_\varepsilon \dx
&= \int_D \frac{\mathrm{d}}{\mathrm{d}t}B^\varepsilon_t\cdot u_\varepsilon\nabla (u_\varepsilon{-}\varepsilon) \dx
\\&
= -\int_D \frac{\mathrm{d}}{\mathrm{d}t}B^\varepsilon_t\cdot (u_\varepsilon{-}\varepsilon)\nabla u_\varepsilon 
= -\int_D \frac{\mathrm{d}}{\mathrm{d}t}B^\varepsilon_t\cdot 
\frac{1}{2}\nabla|u_\varepsilon{-}\varepsilon|^2 = 0.
\end{align*}    

We proceed with the bound \eqref{eq:boundTimeDerivativeApprox} for the time derivative.
Multiplying the equation~\eqref{eq:densityPME2} with $\partial_t u_\varepsilon^m$, integrating
over $D$, performing an integration by parts in the term with the porous medium operator
and estimating the transport term by H\"{o}lder's and Young's inequality yields for all $t\in (0,T^*)$ 
the estimate
\begin{align*}
&\int_D mu_\varepsilon^{m-1}(t)|\partial_t u_\varepsilon|^2(t) \dx
+ \frac{\mathrm{d}}{\mathrm{d}t}\int_D \frac{1}{2} |\nabla u_\varepsilon^m|^2(t) \dx
\\&
\leq \frac{1}{2}\int_D mu_\varepsilon^{m-1}(t)|\partial_t u_\varepsilon|^2(t) \dx
+\frac{\nu^2}{2}\int_D \Big|\frac{\mathrm{d}}{\mathrm{d}t} B^\varepsilon_t\Big|^2 
mu_\varepsilon^{m-1}(t)|\nabla u_\varepsilon|^2(t) \dx.
\end{align*}
Multiplying this bound with $t$ and integrating the resulting estimate over $(0,T)$
we may infer using also \eqref{eq:energyInequalityApprox} and \eqref{boundMaximumPrinciple}
\begin{align*}
&\int_0^T\int_D \frac{t}{2}mu_\varepsilon^{m-1}|\partial_t u_\varepsilon|^2 \dx \dt
+ \frac{T}{2}\int_D |\nabla u_\varepsilon^m|^2(T) \dx
\\&
\leq\frac{1}{2}\int_0^T\int_D |\nabla u_\varepsilon^m|^2 \dx \dt
+\sup_{0\leq t \leq T^*} \Big|\frac{\mathrm{d}}{\mathrm{d}t} B^\varepsilon_t\Big|^2
\frac{T^*\nu^2}{2}\int_0^T\int_D mu_\varepsilon^{m-1}|\nabla u_\varepsilon|^2 \dx \dt
\\&
\leq\Big(\frac{m}{2}(\varepsilon{+}\|u_0\|_{L^\infty})^{m-1}
{+}\frac{T^*\nu^2}{2}\sup_{0\leq t \leq T^*} 
\Big|\frac{\mathrm{d}}{\mathrm{d}t} B^\varepsilon_t\Big|^2\Big)
\int_D\frac{1}{2}|u_\varepsilon|^2(0) \dx
\end{align*}
for all $T\in (0,T^*)$. Moreover, it follows from \eqref{eq:approxBrownianMotion} and Doob's maximal inequality
that $\E\sup_{0\leq t \leq T^*}|\frac{\mathrm{d}}{\mathrm{d}t} B^\varepsilon_t|^2\leq C\varepsilon^{-\beta}T^*\E|B_{T^*}|^2$
for some absolute constant $C>0$. This establishes the estimate \eqref{eq:boundTimeDerivativeApprox}.
The bound \eqref{eq:boundLaplacianApprox} is now a consequence of plugging in the equation \eqref{eq:densityPME2},
then using the triangle inequality, estimating the term with the time derivative by means of \eqref{eq:boundTimeDerivativeApprox} 
and bounding the transport term similarly as at the end of the proof of \eqref{eq:boundTimeDerivativeApprox}.
This concludes the proof of Lemma~\ref{lem:classicalSolution}. \qed

\subsection{Proof of Proposition~\ref{prop:L1contraction} 
{\normalfont (Contraction principle up to time-dependent shift for truncated
Wong--Zakai type approximations)}}
\label{subsec:proofContraction}
Fix $\kappa>0$ and let $\zeta_\delta$ denote the smooth and convex approximation
to the positive part truncation $r\mapsto r_+:=r\vee 0$ on scale $\delta>0$ such that 
\eqref{eq:auxUniformBound1}--\eqref{auxConvergence21} hold true. Define
$\zeta^\delta_\kappa(r):=\kappa + \zeta_\delta(r{-}\kappa)$ which is a smooth and convex
approximation to the truncation $r\mapsto r\vee\kappa$. Finally, fix $\varepsilon,\hat\varepsilon\leq\frac{\kappa}{2}$
and abbreviate for what follows $v_\varepsilon:=\zeta^{\varepsilon^q}_\kappa\circ u_{\varepsilon}^\leftarrow$
resp.\ $v_{\hat\varepsilon}:=\zeta^{\hat\varepsilon^q}_\kappa\circ u_{\hat\varepsilon}^\leftarrow$,
where $q>1$ will be a large exponent to be specified later on in the proof. 
See \eqref{eq:shift} for the definition of the shifted densities. Finally, fix a compact
set $K\subset D$.

We aim to derive an estimate for $\sup_{T\in [0,T^*]}\E\|v_\varepsilon{-}v_{\hat\varepsilon}\|_{L^1(K)}$ 
of the same type as the asserted bound \eqref{eq:L1contraction}. The proof of this proceeds in several steps.
For the sake of better readability, let us occasionally break the proof into intermediate results.

\begin{lemma}
\label{lemma:equations}
Let the assumptions and notation of Subsection~\ref{subsec:proofContraction} be in place.
We next choose a smooth, even and convex map $\eta\colon\R\to [0,\infty)$
such that $\eta(r)=|r|-1$ for $|r|\geq 2$ and $\eta(r)\leq |r|$ for all $r\in\R$. 
Define $\eta_\delta(r):=\delta\eta(\frac{r}{\delta})$.
There is a constant $C>0$ independent of $\delta$ such that
\begin{align}
\label{eq:auxUniformBound}
&\sup_{r\in\R} |(\eta_\delta)'(r)| + |r|(\eta_\delta)''(r) \leq C,
\\\label{eq:auxConvergence}
&(\eta_\delta)'(r) \to \mathrm{sign}(r) \text{ and }
\eta_\delta(r) \nearrow |r| \text{ as } \delta\to 0,
\\\label{auxConvergence2}
&|\eta_\delta(r){-}|r||\leq C\delta\text{ for all } r\in\R,\text{ and }
\eta_\delta(r) = |r|-\delta\text{ for all } |r|\geq 2\delta.
\end{align}
Then the following ``entropy estimate'' holds true
\begin{align}
\nonumber
&-\int_0^{T^*}\int_{\R^d} \eta_\delta\big(v_\varepsilon(x,t){-}v_{\hat\varepsilon}(y,s)\big)
\partial_t\xi(x,t) \dx \dt
\\&\label{eq:weakFormShiftChainRule8}
\leq-\int_0^{T^*}\int_{\R^d}|\nabla v_\varepsilon^m(x,t)|^2
(\eta_\delta)''\big(v_\varepsilon^m(x,t){-}v_{\hat\varepsilon}^m(y,s)\big)
\xi(x,t) \dx \dt
\\&~~~\nonumber
+\int_0^{T^*}\int_{\R^d}\eta_\delta\big(v_\varepsilon^m(x,t){-}v_{\hat\varepsilon}^m(y,s)\big)
\Delta\xi(x,t) \dx \dt
\\&~~~\nonumber
-\int_0^{T^*}\int_{\R^d}\frac{1}{2}\nu^2|\nabla v_\varepsilon(x,t)|^2
(\eta_\delta)''\big(v_\varepsilon(x,t){-}v_{\hat\varepsilon}(y,s)\big)
\xi(x,t) \dx \dt
\\&~~~\nonumber
+\int_0^{T^*}\int_{\R^d}\frac{1}{2}\nu^2
\eta_\delta\big(v_\varepsilon(x,t){-}v_{\hat\varepsilon}(y,s)\big)
\Delta\xi(x,t) \dx \dt
\\&~~~\nonumber
-\int_0^{T^*}\int_{\R^d} \nu \eta_\delta\big(v_\varepsilon(x,t){-}v_{\hat\varepsilon}(y,s)\big)
\nabla\xi(x,t) \dx \,\mathrm{d}B_t
\\&~~~\nonumber
+\int_0^{T^*}\int_{\R^d} \Delta v_\varepsilon^m(x,t)\xi(x,t) 
\\&~~~~~~~~~~~~~~~~~~\nonumber
\times\big\{(\eta_\delta)'\big(v_\varepsilon(x,t){-}v_{\hat\varepsilon}(y,s)\big)
-(\eta_\delta)'\big(v_\varepsilon^m(x,t){-}v_{\hat\varepsilon}^m(y,s)\big)
\big\}\dx \dt
\\&~~~\nonumber
-\int_0^{T^*}\int_{\R^d}m\big((u_\varepsilon^\leftarrow)^{m-1}{-}v_\varepsilon^{m-1}\big)(x,t)
\\&~~~~~~~~~~~~~~~~~~\nonumber
\times\nabla v_\varepsilon(x,t)\cdot(\eta_\delta)'\big(v_\varepsilon(x,t){-}v_{\hat\varepsilon}(y,s)\big) 
\nabla\xi(x,t)\dx \dt
\end{align}
for all $\xi\in C^\infty_{\mathrm{cpt}}(\R^d\times (0,T^*);[0,\infty))$ and all $(y,s)\in \R^d{\times}(0,T^*)$.
A corresponding estimate holds true switching the roles of $v_\varepsilon$ and $v_{\hat\varepsilon}$,
see~\eqref{eq:weakFormShiftChainRule9} below.
\end{lemma}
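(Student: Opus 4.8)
The plan is to carry out the first step of Kru\v{z}kov's doubling-of-variables argument in its stochastic (It\^o--Kru\v{z}kov) form: derive the stochastic PDE solved by the truncated shifted density $v_\varepsilon=\zeta^{\varepsilon^q}_\kappa\circ u^\leftarrow_\varepsilon$, apply It\^o's formula to the convex entropy $\eta_\delta$ evaluated at $v_\varepsilon(x,t)-v_{\hat\varepsilon}(y,s)$ with the second point $(y,s)$ frozen, multiply by the test function $\xi\geq 0$, and integrate by parts in space and time.

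First I would identify the equation for $u^\leftarrow_\varepsilon$. For a fixed realization on which \eqref{eq:uniformBoundDyadicApproximation} holds, $u_\varepsilon$ is a classical $C^{2,1}$ solution of \eqref{eq:densityPME2}--\eqref{eq:densityPMElateral2} and $t\mapsto\nu(B_t-B^\varepsilon_t)$ is a continuous semimartingale with quadratic variation $\nu^2\,\mathrm{d}t$; It\^o's formula applied to $u_\varepsilon\big(x+\nu(B_t-B^\varepsilon_t),t\big)$ makes the smooth transport term $\nu\nabla u_\varepsilon\cdot\dot{B}^\varepsilon$ cancel and yields $\mathrm{d}u^\leftarrow_\varepsilon=\Delta(u^\leftarrow_\varepsilon)^m\,\mathrm{d}t+\tfrac12\nu^2\Delta u^\leftarrow_\varepsilon\,\mathrm{d}t+\nu\nabla u^\leftarrow_\varepsilon\cdot\mathrm{d}B_t$ on the moving domain $D-\nu(B_t-B^\varepsilon_t)$, with $u^\leftarrow_\varepsilon\equiv\varepsilon$ outside. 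Applying It\^o once more with the smooth convex $\zeta=\zeta^{\varepsilon^q}_\kappa$ gives the equation for $v_\varepsilon$; here I would rewrite the degenerate flux via the pointwise identity $\zeta'(u^\leftarrow_\varepsilon)\nabla(u^\leftarrow_\varepsilon)^m=\nabla v_\varepsilon^m+m\big((u^\leftarrow_\varepsilon)^{m-1}-v_\varepsilon^{m-1}\big)\nabla v_\varepsilon$, whose divergence produces the ``clean'' term $\Delta v_\varepsilon^m$, the correction divergence $\mathrm{div}\big(m((u^\leftarrow_\varepsilon)^{m-1}-v_\varepsilon^{m-1})\nabla v_\varepsilon\big)$, and a lower-order term carrying a $\zeta''$-factor. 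The choice $\varepsilon\leq\tfrac{\kappa}{2}$ forces $v_\varepsilon\equiv\kappa$ in a neighbourhood of $\partial D-\nu(B_t-B^\varepsilon_t)$, so the extension of $v_\varepsilon$ to $\R^d$ is spatially $C^2$ and constant outside a compact set --- which is precisely what makes the subsequent integrations by parts over $\R^d$ produce no boundary contributions.

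Next I would apply the It\^o--Kru\v{z}kov chain rule to $t\mapsto\eta_\delta\big(v_\varepsilon(x,t)-v_{\hat\varepsilon}(y,s)\big)$, multiply by $\xi\geq0$, integrate over $\R^d\times(0,T^*)$, and integrate by parts: in time to move $\partial_t$ onto $\xi$ (no endpoint terms, $\xi$ being compactly supported in $(0,T^*)$), and in space on each drift contribution. Two standard devices do the work. For the porous-medium term I add and subtract $(\eta_\delta)'(v_\varepsilon^m-v_{\hat\varepsilon}^m)$ inside the factor $(\eta_\delta)'(v_\varepsilon-v_{\hat\varepsilon})$: the matched part integrates by parts into the dissipation term $-\int|\nabla v_\varepsilon^m|^2(\eta_\delta)''(v_\varepsilon^m-v_{\hat\varepsilon}^m)\xi$ and the $\Delta\xi$-term with argument $v_\varepsilon^m-v_{\hat\varepsilon}^m$, while the mismatch becomes the term containing $\Delta v_\varepsilon^m$. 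The $\tfrac12\nu^2\Delta v_\varepsilon$ drift together with the It\^o correction of the noise gives, after integration by parts, the dissipation and $\Delta\xi$ terms with argument $v_\varepsilon-v_{\hat\varepsilon}$ and the stochastic term $-\int\nu\eta_\delta(v_\varepsilon-v_{\hat\varepsilon})\nabla\xi\,\mathrm{d}B_t$. The correction divergence integrates by parts into a $(\eta_\delta)''$-piece that is nonpositive --- on $\{\nabla v_\varepsilon\neq0\}$ one has $u^\leftarrow_\varepsilon>\kappa$ and $v_\varepsilon\leq u^\leftarrow_\varepsilon$ since $\zeta_\delta(r)\leq r_+$, hence $(u^\leftarrow_\varepsilon)^{m-1}-v_\varepsilon^{m-1}\geq0$ because $m>1$, so with $(\eta_\delta)''\geq0$ and $\xi\geq0$ it may be discarded --- and into the $\nabla\xi$-piece, which is the last term of \eqref{eq:weakFormShiftChainRule8}; the residual $\zeta''$-terms are controlled using convexity of $\zeta$. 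The companion inequality \eqref{eq:weakFormShiftChainRule9} with $v_\varepsilon$ and $v_{\hat\varepsilon}$ interchanged follows by the same computation.

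I expect the main obstacle to be the bookkeeping of the correction terms. Because $\zeta$ does not commute with $r\mapsto r^m$, and because the entropy $\eta_\delta$ must be evaluated at $v_\varepsilon-v_{\hat\varepsilon}$ for the time-derivative term but at $v_\varepsilon^m-v_{\hat\varepsilon}^m$ for the nonlinear diffusion, the chain rule produces several extra terms, and the delicate point is to split them into those carrying a favourable sign (which are dropped) and those that must be retained exactly as displayed. The remaining, more routine, issue is the rigorous justification of the two It\^o applications and of all the spatial integrations by parts for these non-smooth compositions, via the quantitative regularity from Lemma~\ref{lem:classicalSolution} and the boundary-layer property.
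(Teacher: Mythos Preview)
Your proposal is correct and follows the same overall architecture as the paper: derive the equation for the shifted density, pass to $v_\varepsilon=\zeta^{\varepsilon^q}_\kappa\circ u^\leftarrow_\varepsilon$, then apply the entropy $\eta_\delta$ with the second point frozen, using the boundary-layer property of the $\kappa$-truncation to integrate by parts over $\R^d$ without boundary terms, the add--subtract of $(\eta_\delta)'(v_\varepsilon^m-v_{\hat\varepsilon}^m)$ to isolate the porous-medium dissipation, and the sign $(u^\leftarrow_\varepsilon)^{m-1}-v_\varepsilon^{m-1}\geq 0$ on $\{\nabla v_\varepsilon\neq 0\}$ to discard a term.

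The one noteworthy technical difference is how the two chain rules (for $\zeta$ and for $\eta_\delta$) are implemented. You apply It\^o's formula in the strong sense, exploiting that $u_\varepsilon\in C^{2,1}_{x,t}$ so that $t\mapsto u^\leftarrow_\varepsilon(x,t)$ is a genuine semimartingale, and then drop the nonnegative $\zeta''$-remainders by sign. The paper instead works in weak form throughout: it tests \eqref{eq:weakFormShift5} with $(\zeta^{\varepsilon^q}_\kappa)'(u^\leftarrow_\varepsilon)\xi$ and then \eqref{eq:weakFormShiftChainRule3} with $(\eta_\delta)'(v_\varepsilon-v_{\hat\varepsilon})\xi$, and because these test functions are only H\"older in time it regularises via Steklov averages and uses the convexity inequality $\zeta'(b)(a-b)\leq\zeta(a)-\zeta(b)$ (and likewise for $\eta_\delta$) to pass from the difference quotient of $u^\leftarrow_\varepsilon$ to that of $\zeta(u^\leftarrow_\varepsilon)$. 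Your route is somewhat more direct here because the approximations are classical; the paper's route is the standard device from the entropy-solution literature (Dareiotis--Gerencs\'er--Gess) and would survive with less regularity. Either way the same inequality \eqref{eq:weakFormShiftChainRule8} drops out, with the same retained error terms.
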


\begin{proof}
\textit{Step 1 (Equation for $u_{\varepsilon}^\leftarrow$ and $u_{\hat\varepsilon}^\leftarrow$):} 
The first step is to derive the equation for the shifted densities
$u_{\varepsilon}^\leftarrow$ and $u_{\hat\varepsilon}^\leftarrow$, respectively.
To this end, we aim to apply It\^{o}'s formula
with respect to $\int_{\R^d}  u_\varepsilon(x,t)\eta(x{-}\nu(B_t{-}B^\varepsilon_t),t)\dx$
for each test function $\eta\in C^\infty_{\mathrm{cpt}}(\R^d\times (0,T^*))$.
Note that $\partial_t u_\varepsilon\equiv 0$ on the lateral boundary $\partial D\times (0,T^*)$.
Hence, it holds $\partial_tu_\varepsilon\in C(\R^d\times(0,T^*))$ and we thus obtain from
an application of It\^{o}'s formula for each test function $\eta\in C^\infty_{\mathrm{cpt}}(D\times (0,T^*))$
with probability one
\begin{align}
\nonumber
&-\int_0^{T^*}\int_{\R^d} u_\varepsilon(x,t)\partial_t\eta(x{-}\nu(B_t{-}B^\varepsilon_t),t) \dx \dt
\\&\label{eq:weakFormShift1}
= \int_0^{T^*}\int_{\R^d} \partial_tu_\varepsilon(x,t)\eta(x{-}\nu(B_t{-}B^\varepsilon_t),t) \dx \dt
\\&~~~\nonumber
+\int_0^{T^*}\int_{\R^d} u_\varepsilon(x,t)\frac{\mathrm{d}}{\mathrm{d}t}B^\varepsilon_t\cdot
\nabla\eta(x{-}\nu(B_t{-}B^\varepsilon_t),t) \dx \dt
\\&~~~\nonumber
+\int_0^{T^*}\int_{\R^d} u_\varepsilon(x,t)\frac{1}{2}\nu^2
\Delta\eta(x{-}\nu(B_t{-}B^\varepsilon_t),t) \dx \dt
\\&~~~\nonumber
-\int_0^{T^*}\int_{\R^d} u_\varepsilon(x,t)\nu
\nabla\eta(x{-}\nu(B_t{-}B^\varepsilon_t),t) \dx \,\mathrm{d}B_t.
\end{align}
Since $u_\varepsilon$ solves the equation \eqref{eq:densityPME2} classically in $D\times (0,T^*)$,
and is by definition constant outside of it, we may compute
\begin{equation}
\begin{aligned}
\label{eq:weakFormShift2}
&\int_0^{T^*}\int_{\R^d} \partial_tu_\varepsilon(x,t)\eta(x{-}\nu(B_t{-}B^\varepsilon_t),t) \dx \dt
\\&
=\int_0^{T^*}\int_{\R^d\setminus\partial D} \Delta u_\varepsilon^m(x,t)\eta(x{-}\nu(B_t{-}B^\varepsilon_t),t) \dx \dt
\\&~~~
+\int_0^{T^*}\int_{\R^d\setminus\partial D} \eta(x{-}\nu(B_t{-}B^\varepsilon_t),t)
\frac{\mathrm{d}}{\mathrm{d}t}B^\varepsilon_t\cdot\nabla u_\varepsilon(x,t) \dx \dt.
\end{aligned}
\end{equation}
Integrating by parts in the second term on the right hand side of the latter identity
as well as performing a change of variables $x\mapsto x+\nu(B_t{-}B^\varepsilon_t)$ yields
\begin{equation}
\begin{aligned}
\label{eq:weakFormShift3}
&\int_0^{T^*}\int_{\R^d} \partial_tu_\varepsilon(x,t)\eta(x{-}\nu(B_t{-}B^\varepsilon_t),t) \dx \dt
\\&
+\int_0^{T^*}\int_{\R^d} u_\varepsilon(x,t)\frac{\mathrm{d}}{\mathrm{d}t}B^\varepsilon_t\cdot
\nabla\eta(x{-}\nu(B_t{-}B^\varepsilon_t),t) \dx \dt
\\&
=\int_0^{T^*}\int_{\R^d\setminus({-}\nu(B_t{-}B^\varepsilon_t)+\partial D)} 
\Delta (u_{\varepsilon}^\leftarrow)^m(x,t)\eta(x,t) \dx \dt.
\end{aligned}
\end{equation}
Note that there is no boundary integral appearing from the integration by parts
in the second term on the right hand side of \eqref{eq:weakFormShift2} since 
$u_\varepsilon\in C(\R^d\times(0,T^*))$. We compute analogously
\begin{equation}
\begin{aligned}
\label{eq:weakFormShift4}
&\int_0^{T^*}\int_{\R^d} u_\varepsilon(x,t)\frac{1}{2}\nu^2
\Delta\eta(x{-}\nu(B_t{-}B^\varepsilon_t),t) \dx \dt
\\&
=-\int_0^{T^*}\int_{\R^d\setminus({-}\nu(B_t{-}B^\varepsilon_t)+\partial D)} 
\frac{1}{2}\nu^2\nabla u_{\varepsilon}^\leftarrow(x,t)\cdot\nabla\eta(x,t) \dx \dt
\end{aligned}
\end{equation}
and
\begin{equation}
\begin{aligned}
\label{eq:weakFormShift100}
&-\int_0^{T^*}\int_{\R^d} u_\varepsilon(x,t)\nu
\nabla\eta(x{-}\nu(B_t{-}B^\varepsilon_t),t) \dx \,\mathrm{d}B_t.
\\&
=\int_0^{T^*}\int_{\R^d\setminus({-}\nu(B_t{-}B^\varepsilon_t)+\partial D)} 
\nu\nabla u_{\varepsilon}^\leftarrow(x,t) \eta(x,t) \dx \,\mathrm{d}B_t.
\end{aligned}
\end{equation}
From \eqref{eq:weakFormShift1}, \eqref{eq:weakFormShift3}, \eqref{eq:weakFormShift4} and \eqref{eq:weakFormShift100}
we infer that for each $\eta\in C^\infty_{\mathrm{cpt}}(\R^d\times (0,T^*))$ it holds with probability one 
\begin{equation}
\begin{aligned}
\label{eq:weakFormShift5}
&-\int_0^{T^*}\int_{\R^d} u_{\varepsilon}^\leftarrow(x,t)\partial_t\eta(x,t) \dx \dt
\\&
=\int_0^{T^*}\int_{\R^d\setminus({-}\nu(B_t{-}B^\varepsilon_t)+\partial D)} 
\Delta (u_{\varepsilon}^\leftarrow)^m(x,t)\eta(x,t) \dx \dt
\\&~~~
-\int_0^{T^*}\int_{\R^d\setminus({-}\nu(B_t{-}B^\varepsilon_t)+\partial D)} 
\frac{1}{2}\nu^2\nabla u_{\varepsilon}^\leftarrow(x,t)\cdot\nabla\eta(x,t) \dx \dt
\\&~~~
+\int_0^{T^*}\int_{\R^d} \nu \nabla u_{\varepsilon}^\leftarrow(x,t)\eta(x,t) \dx \,\mathrm{d}B_t.
\end{aligned}
\end{equation}
Analogously one derives the equation for $u_{\hat\varepsilon}^\leftarrow$.

\textit{Step 2 (Convex approximation to $r\mapsto r\vee\kappa$ as test function):}
In the next step we aim to derive the equation for $v_\varepsilon=\zeta^{\varepsilon^q}_\kappa\circ u_\varepsilon^\leftarrow$
based on the equation for the shifted density derived in \eqref{eq:weakFormShift5}. The idea
is to test the equation \eqref{eq:weakFormShift5} with the test function 
$\eta:=\big((\zeta_\kappa^{\varepsilon^q})'\circ u_\varepsilon^\leftarrow\big)\xi$, 
where $\xi\in C^\infty_{\mathrm{cpt}}(\R^d\times (0,T^*);[0,\infty))$
is arbitrary. However, since the shifted density $u_\varepsilon^\leftarrow$ is only H\"{o}lder continuous
in the time variable we have to regularize first. To this end,
we make use of the Steklov average $\eta_h(x,t):=\frac{1}{h}\int_{t}^{t+h}\eta(x,s)\,\mathrm{d}s$
which is an admissible test function for \eqref{eq:weakFormShift5} for all sufficiently small
$h>0$ (depending only on the support of $\xi$). Since $\partial_t\eta_h(x,t)=\frac{\eta(x,t+h){-}\eta(x,t)}{h}$
we obtain by a simple change of variables
\begin{equation}
\label{eq:aux1ChainRule}
\begin{aligned}
&-\int_0^{T^*}\int_{\R^d} u_\varepsilon^\leftarrow(x,t)\partial_t\eta_h(x,t) \dx \dt
\\&
=-\int_0^{T^*}\int_{\R^d} \frac{1}{h}
\big(u_\varepsilon^\leftarrow(x,t{-}h){-}u_\varepsilon^\leftarrow(x,t)\big)
(\zeta^{\varepsilon^q}_\kappa)'(u_\varepsilon^\leftarrow(x,t))\xi(x,t) \dx \dt
\end{aligned}
\end{equation}
for each $h>0$ and each $\xi\in C^\infty_{\mathrm{cpt}}(\R^d{\times} (0,T^*);[0,\infty))$ 
almost surely. We then deduce from the smoothness and convexity of $\zeta_\kappa^{\varepsilon^q}$
as well as by reverting the change of variables the bound
\begin{align*}
&-\int_0^{T^*}\int_{\R^d} u_\varepsilon^\leftarrow(x,t)\partial_t\eta_h(x,t) \dx \dt 
\\&
\geq -\int_0^{T^*}\int_{\R^d} \frac{1}{h}
\big(\zeta^{\varepsilon^q}_\kappa(u_\varepsilon^\leftarrow(x,t{-}h))
-\zeta^{\varepsilon^q}_\kappa(u_\varepsilon^\leftarrow(x,t))\big)
\xi(x,t) \dx \dt
\\&
= -\int_0^{T^*}\int_{\R^d} \frac{1}{h}
\big(\xi(x,t{+}h){-}\xi(x,t)\big)
\zeta^{\varepsilon^q}_\kappa(u_\varepsilon^\leftarrow(x,t)) \dx \dt
\end{align*}
for each $h>0$ and each $\xi\in C^\infty_{\mathrm{cpt}}(D{\times} (0,T^*);[0,\infty))$ 
almost surely. Hence, we may infer from this, \eqref{eq:weakFormShift5} and standard properties 
of the Steklov average after letting $h\to 0$ the estimate 
\begin{align}
\nonumber
&-\int_0^{T^*}\int_{\R^d} \zeta^{\varepsilon^q}_\kappa(u_\varepsilon^\leftarrow(x,t))\partial_t\xi(x,t) \dx \dt
\\&\label{eq:weakFormShiftChainRule}
\leq\int_0^{T^*}\int_{\R^d\setminus({-}\nu(B_t{-}B^\varepsilon_t)+\partial D)} 
\Delta (u_\varepsilon^\leftarrow)^m(x,t)(\zeta^{\varepsilon^q}_\kappa)'
(u_\varepsilon^\leftarrow(x,t))\xi(x,t) \dx \dt
\\&~~~\nonumber
-\int_0^{T^*}\int_{\R^d\setminus({-}\nu(B_t{-}B^\varepsilon_t)+\partial D)} 
\frac{1}{2}\nu^2\nabla u_\varepsilon^\leftarrow(x,t)\cdot\nabla
\big((\zeta^{\varepsilon^q}_\kappa)'(u_\varepsilon^\leftarrow(x,t))\xi(x,t)\big) \dx \dt
\\&~~~\nonumber
+\int_0^{T^*}\int_{\R^d} \nu\nabla u_\varepsilon^\leftarrow(x,t)
(\zeta^{\varepsilon^q}_\kappa)'(u_\varepsilon^\leftarrow(x,t))\xi(x,t) \dx \,\mathrm{d}B_t,
\end{align}
which is valid for each $\xi\in C^\infty_{\mathrm{cpt}}(\R^d{\times} (0,T^*);[0,\infty))$ 
on a set with probability one. Note that on $\R^d\setminus({-}\nu(B_t{-}B^\varepsilon_t)+\partial D)$
we may apply the chain rule to compute that $\nabla u_\varepsilon^\leftarrow(x,t)
(\zeta^{\varepsilon^q}_\kappa)'(u_\varepsilon^\leftarrow(x,t))
=\nabla(\zeta^{\varepsilon^q}_\kappa\circ u_\varepsilon^\leftarrow)(x,t)$. Furthermore, note that
$(\zeta^{\varepsilon^q}_\kappa)'(r)=0$ holds true for all $r\leq\kappa$. In particular, because of 
$u_\varepsilon^\leftarrow\in C(\R^d{\times}[0,T^*])$ and the choice $\varepsilon\leq\frac{\kappa}{2}$
we have for all $t\in [0,T^*]$ that 
$(\zeta^{\varepsilon^q}_\kappa)'\circ u_\varepsilon^\leftarrow(\cdot,t) \equiv 0$ in a
neighborhood of the interface $({-}\nu(B_t{-}B^\varepsilon_t)+\partial D)$.
This in turn means that we can integrate by parts in the first term
on the right hand side of \eqref{eq:weakFormShiftChainRule} without producing an additional surface integral.
Taking all of these information together yields the bound
\begin{align}
\nonumber
&-\int_0^{T^*}\int_{\R^d} (\zeta^{\varepsilon^q}_\kappa\circ u_\varepsilon^\leftarrow)\partial_t\xi \dx \dt
\\&\label{eq:weakFormShiftChainRule2}
\leq-\int_0^{T^*}\int_{\R^d}m(u_\varepsilon^\leftarrow)^{m-1}|\nabla u_\varepsilon^\leftarrow|^2
\big((\zeta^{\varepsilon^q}_\kappa)''\circ u_\varepsilon^\leftarrow\big)\xi \dx \dt
\\&~~~\nonumber
-\int_0^{T^*}\int_{\R^d}m(u_\varepsilon^\leftarrow)^{m-1}
\nabla(\zeta^{\varepsilon^q}_\kappa\circ u_\varepsilon^\leftarrow)
\cdot\nabla\xi \dx \dt
\\&~~~\nonumber
-\int_0^{T^*}\int_{\R^d}\frac{1}{2}\nu^2|\nabla u_\varepsilon^\leftarrow|^2
\big((\zeta^{\varepsilon^q}_\kappa)''\circ u_\varepsilon^\leftarrow\big)\xi \dx \dt
\\&~~~\nonumber
-\int_0^{T^*}\int_{\R^d}\frac{1}{2}\nu^2\nabla(\zeta^{\varepsilon^q}_\kappa\circ u_\varepsilon^\leftarrow)
\cdot\nabla\xi \dx \dt
\\&~~~\nonumber
+\int_0^{T^*}\int_{\R^d} \nu
\nabla(\zeta^{\varepsilon^q}_\kappa\circ u_\varepsilon^\leftarrow) 
\xi \dx \,\mathrm{d}B_t
\end{align}
for each $\xi\in C^\infty_{\mathrm{cpt}}(\R^d{\times} (0,T^*);[0,\infty))$ almost surely.
Exploiting the sign $(\zeta^{\varepsilon^q}_\kappa)''\geq 0$ and making use of the abbreviation
$v_\varepsilon=\zeta^{\varepsilon^q}_\kappa\circ u_\varepsilon^\leftarrow$ we arrive at the estimate
\begin{align}
\nonumber
&-\int_0^{T^*}\int_{\R^d} v_\varepsilon(x,t)\partial_t\xi(x,t) \dx \dt
\\&\label{eq:weakFormShiftChainRule3}
\leq-\int_0^{T^*}\int_{\R^d}\nabla v_\varepsilon^m(x,t) \cdot\nabla\xi(x,t) \dx \dt
\\&~~~\nonumber
-\int_0^{T^*}\int_{\R^d}\frac{1}{2}\nu^2\nabla v_\varepsilon(x,t) \cdot\nabla\xi(x,t) \dx \dt
\\&~~~\nonumber
+\int_0^{T^*}\int_{\R^d} \nu \nabla v_\varepsilon(x,t)\xi(x,t) \dx \,\mathrm{d}B_t
\\&~~~\nonumber
-\int_0^{T^*}\int_{\R^d}m\big((u_\varepsilon^\leftarrow)^{m-1}{-}v_\varepsilon^{m-1}\big)(x,t)
\nabla v_\varepsilon(x,t)\cdot\nabla\xi(x,t) \dx \dt
\end{align}
which is valid for all $\xi\in C^\infty_{\mathrm{cpt}}(\R^d{\times} (0,T^*);[0,\infty))$ almost surely.
An analogous estimate also holds true for the pair $(u_{\hat\varepsilon}^\leftarrow,
v_{\hat\varepsilon}=\zeta^{\hat\varepsilon^q}_\kappa\circ u_{\hat\varepsilon}^\leftarrow)$, i.e.,
\begin{align}
\nonumber
&-\int_0^{T^*}\int_{\R^d} v_{\hat\varepsilon}(y,s)\partial_t\tilde\xi(y,s) \dy \ds
\\&\label{eq:weakFormShiftChainRule4}
\leq-\int_0^{T^*}\int_{\R^d}\nabla v_{\hat\varepsilon}^m(y,s) \cdot\nabla\tilde\xi(y,s) \dy \ds
\\&~~~\nonumber
-\int_0^{T^*}\int_{\R^d}\frac{1}{2}\nu^2\nabla v_{\hat\varepsilon}(y,s) \cdot\nabla\tilde\xi(y,s) \dy \ds
\\&~~~\nonumber
+\int_0^{T^*}\int_{\R^d} \nu \nabla v_{\hat\varepsilon}(y,s)\tilde\xi(y,s) \dy \,\mathrm{d}B_s
\\&~~~\nonumber
-\int_0^{T^*}\int_{\R^d}m\big((u_{\hat\varepsilon}^\leftarrow)^{m-1}{-}v_{\hat\varepsilon}^{m-1}\big)(y,s)
\nabla v_{\hat\varepsilon}(y,s)\cdot\nabla\tilde\xi(y,s) \dy \ds
\end{align}
for all $\tilde\xi\in C^\infty_{\mathrm{cpt}}(D{\times} (0,T^*);[0,\infty))$ almost surely.

\textit{Step 3 (Convex approximation $\eta_\delta$ to $r\mapsto |r|$ as test function):}
We proceed by testing the inequality \eqref{eq:weakFormShiftChainRule3} with
$(\eta_\delta)'(v_\varepsilon(x,t){-}v_{\hat\varepsilon}(y,s))\xi(x,t)$ 
where the test function $\xi\in C^\infty_{\mathrm{cpt}}(\R^d\times (0,T^*);[0,\infty))$ 
and $(y,s)\in \R^d{\times}(0,T^*)$ are arbitrary. This again incorporates
several integration by parts which we do not want to produce any additional
surface integrals. We reiterate that this will indeed not be the case
since neither the Dirichlet data nor the Neumann data for $v_\varepsilon$
jump across the interfaces ${-}\nu(B_t{-}B^\varepsilon_t)+\partial D$ for all $t\in [0,T^*]$.
Hence, arguing similar to the previous step using in particular the Steklov average 
and the convexity of $\eta_\delta$ we obtain the estimate
\begin{align}
\nonumber
&-\int_0^{T^*}\int_{\R^d} \eta_\delta\big(v_\varepsilon(x,t){-}v_{\hat\varepsilon}(y,s)\big)
\partial_t\xi(x,t) \dx \dt
\\&\label{eq:weakFormShiftChainRule5}
\leq-\int_0^{T^*}\int_{\R^d}\nabla v_\varepsilon^m(x,t) \cdot\nabla
\big((\eta_\delta)'\big(v_\varepsilon(x,t){-}v_{\hat\varepsilon}(y,s)\big)\xi(x,t)\big)  \dx \dt
\\&~~~\nonumber
-\int_0^{T^*}\int_{\R^d}\frac{1}{2}\nu^2\nabla v_\varepsilon(x,t) \cdot\nabla
\big((\eta_\delta)'(v_\varepsilon(x,t){-}v_{\hat\varepsilon}(y,s))\xi(x,t)\big) \dx \dt
\\&~~~\nonumber
-\int_0^{T^*}\int_{\R^d} \nu \eta_\delta\big(v_\varepsilon(x,t){-}v_{\hat\varepsilon}(y,s)\big)
\nabla\xi(x,t) \dx \,\mathrm{d}B_t
\\&~~~\nonumber
-\int_0^{T^*}\int_{\R^d}m\big((u_\varepsilon^\leftarrow)^{m-1}{-}v_\varepsilon^{m-1}\big)(x,t)
\\&~~~~~~~~~~~~~~~~~~\nonumber
\times|\nabla v_\varepsilon(x,t)|^2(\eta_\delta)''
\big(v_\varepsilon(x,t){-}v_{\hat\varepsilon}(y,s)\big)\xi(x,t)\dx \dt
\\&~~~\nonumber
-\int_0^{T^*}\int_{\R^d}m\big((u_\varepsilon^\leftarrow)^{m-1}{-}v_\varepsilon^{m-1}\big)(x,t)
\\&~~~~~~~~~~~~~~~~~~\nonumber
\times\nabla v_\varepsilon(x,t)\cdot(\eta_\delta)'\big(v_\varepsilon(x,t){-}v_{\hat\varepsilon}(y,s)\big) 
\nabla\xi(x,t)\dx \dt
\end{align}
for all $\xi\in C^\infty_{\mathrm{cpt}}(\R^d\times (0,T^*);[0,\infty))$ 
and all $(y,s)\in \R^d{\times}(0,T^*)$. As a preparation for what follows,
we postprocess the right hand side of the latter inequality. Integrating
by parts, adding zero and using the chain rule we may rewrite the non-linear 
diffusion term as follows
\begin{align}
\nonumber
&-\int_0^{T^*}\int_{\R^d}\nabla v_\varepsilon^m(x,t) \cdot\nabla
\big((\eta_\delta)'\big(v_\varepsilon(x,t){-}v_{\hat\varepsilon}(y,s)\big)\xi(x,t)\big)  \dx \dt
\\&\label{eq:weakFormShiftChainRule6}
=-\int_0^{T^*}\int_{\R^d}|\nabla v_\varepsilon^m(x,t)|^2
(\eta_\delta)''\big(v_\varepsilon^m(x,t){-}v_{\hat\varepsilon}^m(y,s)\big)
\xi(x,t) \dx \dt
\\&~~~\nonumber
+\int_0^{T^*}\int_{\R^d}\eta_\delta\big(v_\varepsilon^m(x,t){-}v_{\hat\varepsilon}^m(y,s)\big)
\Delta\xi(x,t) \dx \dt
\\&~~~\nonumber
+\int_0^{T^*}\int_{\R^d} \Delta v_\varepsilon^m(x,t)\xi(x,t)
\\&~~~~~~~~~~~~~~~~~~\nonumber
\times
\big\{(\eta_\delta)'\big(v_\varepsilon(x,t){-}v_{\hat\varepsilon}(y,s)\big)
-(\eta_\delta)'\big(v_\varepsilon^m(x,t){-}v_{\hat\varepsilon}^m(y,s)\big)
\big\} \dx \dt.
\end{align}
Analogously one obtains for the Stratonovich correction term
\begin{align}
\nonumber
&-\int_0^{T^*}\int_{\R^d}\frac{1}{2}\nu^2\nabla v_\varepsilon(x,t) \cdot\nabla
\big((\eta_\delta)'(v_\varepsilon(x,t){-}v_{\hat\varepsilon}(y,s))\xi(x,t)\big) \dx \dt
\\&\label{eq:weakFormShiftChainRule7}
=-\int_0^{T^*}\int_{\R^d}\frac{1}{2}\nu^2|\nabla v_\varepsilon(x,t)|^2
(\eta_\delta)''\big(v_\varepsilon(x,t){-}v_{\hat\varepsilon}(y,s)\big)
\xi(x,t) \dx \dt
\\&~~~\nonumber
+\int_0^{T^*}\int_{\R^d}\frac{1}{2}\nu^2
\eta_\delta\big(v_\varepsilon(x,t){-}v_{\hat\varepsilon}(y,s)\big)
\Delta\xi(x,t)\dx \dt.
\end{align}

Since $\chi_{\supp\nabla v_\varepsilon}v_\varepsilon\leq \chi_{\supp\nabla v_\varepsilon}u_\varepsilon^\leftarrow$ 
by the definition of $v_\varepsilon=\zeta^{\varepsilon^q}_\kappa\circ u_\varepsilon^\leftarrow$ 
and the truncation $\zeta^{\varepsilon^q}_\kappa$ we observe that the penultimate term in \eqref{eq:weakFormShiftChainRule5}
has a favorable sign. Together with the two identities \eqref{eq:weakFormShiftChainRule6} and
\eqref{eq:weakFormShiftChainRule7} the bound \eqref{eq:weakFormShiftChainRule5} thus yields
the asserted estimate~\eqref{eq:weakFormShiftChainRule8}.

Testing the inequality \eqref{eq:weakFormShiftChainRule4} with
$(\eta_\delta)'(v_{\hat\varepsilon}(y,s){-}v_{\varepsilon}(x,t))\tilde\xi(y,s)$, 
where the test function $\tilde\xi\in C^\infty_{\mathrm{cpt}}(\R^d\times (0,T^*);[0,\infty))$ 
and $(x,t)\in \R^d{\times}(0,T^*)$ are arbitrary, yields along the same lines the bound
\begin{align}
\nonumber
&-\int_0^{T^*}\int_{\R^d} \eta_\delta\big(v_{\hat\varepsilon}(y,s){-}v_{\varepsilon}(x,t)\big)
\partial_t\tilde\xi(y,s) \dy \ds
\\&\label{eq:weakFormShiftChainRule9}
\leq-\int_0^{T^*}\int_{\R^d}|\nabla v_{\hat\varepsilon}^m(y,s)|^2
(\eta_\delta)''\big(v_{\hat\varepsilon}^m(y,s){-}v_{\varepsilon}^m(x,t)\big)
\tilde\xi(y,s) \dy \ds
\\&~~~\nonumber
+\int_0^{T^*}\int_{\R^d}\eta_\delta\big(v_{\hat\varepsilon}^m(y,s){-}v_{\varepsilon}^m(x,t)\big)
\Delta\tilde\xi(y,s) \dy \ds
\\&~~~\nonumber
-\int_0^{T^*}\int_{\R^d}\frac{1}{2}\nu^2|\nabla v_{\hat\varepsilon}(y,s)|^2
(\eta_\delta)''\big(v_{\hat\varepsilon}(y,s){-}v_{\varepsilon}(x,t)\big)
\tilde\xi(y,s) \dy \ds
\\&~~~\nonumber
+\int_0^{T^*}\int_{\R^d}\frac{1}{2}\nu^2
\eta_\delta\big(v_{\hat\varepsilon}(y,s){-}v_{\varepsilon}(x,t)\big)
\Delta\tilde\xi(y,s) \dy \ds
\\&~~~\nonumber
-\int_0^{T^*}\int_{\R^d} \nu \eta_\delta\big(v_{\hat\varepsilon}(y,s){-}v_{\varepsilon}(x,t)\big)
\nabla\tilde\xi(y,s) \dy \,\mathrm{d}B_s
\\&~~~\nonumber
+\int_0^{T^*}\int_{\R^d} \Delta v_{\hat\varepsilon}^m(y,s)\tilde\xi(y,s)
\\&~~~~~~~~~~~~~~~~~~\nonumber
\times\big\{(\eta_\delta)'\big(v_{\hat\varepsilon}(y,s){-}v_{\varepsilon}(x,t)\big)
-(\eta_\delta)'\big(v_{\hat\varepsilon}^m(y,s){-}v_{\varepsilon}^m(x,t)\big)
\big\} \dy \ds
\\&~~~\nonumber
-\int_0^{T^*}\int_{\R^d}m\big((u_{\hat\varepsilon}^\leftarrow)^{m-1}{-}v_{\hat\varepsilon}^{m-1}\big)(y,s)
\\&~~~~~~~~~~~~~~~~~~\nonumber
\times\nabla v_{\hat\varepsilon}(y,s)\cdot(\eta_\delta)'\big(v_{\hat\varepsilon}(y,s){-}v_{\varepsilon}(x,t)\big) 
\nabla\tilde\xi(y,s)\dy \ds.
\end{align}
This concludes the proof of Lemma~\ref{lemma:equations}. 
\end{proof}

We continue with the proof of Proposition~\ref{prop:L1contraction}.
The next step takes care of the proper choice of test functions $\xi(x,t)$ resp.\ $\tilde\xi(y,s)$ 
in the latter two estimates. After that, we start merging them by integration over the respective 
independent variables (and the probability space) and summing the two resulting inequalities.

Consider the mollifier $\rho\in C^\infty_{\mathrm{cpt}}((0,1);[0,\infty))$ 
with $\int_\R\rho(r)\,\mathrm{d}r$ already used in \eqref{eq:approxBrownianMotion},
and define for $\tau>0$ the scaled kernel $\rho_\tau:=\frac{1}{\tau}\rho(\frac{\cdot}{\tau})$.
Let $\varphi\in C^\infty_{\mathrm{cpt}}((0,T^*);[0,1])$ and fix another even mollifier
$\gamma\in C^\infty_{\mathrm{cpt}}(B_1;[0,\infty))$ such that $\int_{B_1}\gamma(x)\dx=1$.
For $\theta>0$ let $\gamma_\theta:=\frac{1}{\theta^d}\gamma(\frac{\cdot}{\theta})$.
Now, since $K\subset D$ is compact we can find a scale $s_c\in (0,1)$ such that $K$ is contained
in $D_{s_c}:=\{x\in D\colon \mathrm{dist}(x,\partial D)>s_c\}$. Moreover, because $D$ has a $C^2$ boundary $\partial D$
there exists (cf.\ ~\cite[Lemma 5.4]{Andreianov:2007}) a sequence $(\bar\xi_h)_h$ and a constant $C=C(D)$ such that
\begin{itemize}
\item[i)] $\bar\xi_h\in H^1_0(D;[0,1])$, $\bar\xi_h=\chi_{D}$ on 
				  $\{x\in D\colon\mathrm{dist}(x,\partial D)\geq h\}$,
\item[ii)] it holds $\int_{D}\nabla\phi\cdot\nabla\bar\xi_h\dx\geq 0$ for all $\phi \in H^1_0(D;[0,\infty))$,
\item[iii)] $\supp\nabla\bar\xi_h\subset\{x\in D\colon\mathrm{dist}(x,\partial D)< h\}$, and we have the bounds
\begin{align}
\label{eq:BoundSubharmonic}
C^{-1}\leq\int_{D}|\nabla\bar\xi_h|\dx\leq C,\quad\int_{D}|\nabla\bar\xi_h|^2\dx\leq Ch^{-1}.
\end{align}
\end{itemize}
We then fix once and for all a scale $h\in (0,s_c)$, and set $\bar\xi:=\bar\xi_h$. Note that
$\bar\xi=\chi_K$ on $K$ by the choice of $s_c$ and $h$. For purely technical reasons, we actually consider
in the following a mollified version of $\bar\xi$. Let $\bar\xi_l:=\gamma_{l}*\bar\xi$ for $l>0$.

Let now $y\in \R^d$ and $s\in (0,T^*)$ be fixed. We then define the test function
\begin{align}
\label{eq:defTestFunction}
\xi(x,t,y,s) := \rho_\tau(t{-}s)\varphi\Big(\frac{t{+}s}{2}\Big)
\gamma_\theta(x{-}y)\bar\xi_l\Big(\frac{x{+}y}{2}\Big),
\quad (x,t)\in \R^d{\times}(0,T^*).
\end{align}
For this to be an admissible choice in \eqref{eq:weakFormShiftChainRule8}
we need to restrict the range of the various parameters. Assuming that
\begin{align}
\label{eq:choiceScales}
2\tau<\min\{\inf\supp\varphi,T^*{-}\sup\supp\varphi\} \text{ and }
\theta\vee l<\frac{s_c}{4} 
\end{align}
we observe $\xi(\cdot,\cdot,y,s)\in 
C^\infty_{\mathrm{cpt}}(\R^d{\times}(0,T^*);[0,\infty))$ and is thus admissible
for \eqref{eq:weakFormShiftChainRule8}. Moreover, for every $x\in \R^d$ and
$t\in (0,T^*)$ the test function
\begin{align}
\label{eq:defTestFunction2}
\tilde\xi(y,s,x,t) := \xi(x,t,y,s),\quad (y,s)\in \R^d{\times}(0,T^*)
\end{align}
then also represents an admissible choice for \eqref{eq:weakFormShiftChainRule9} under the
same restrictions \eqref{eq:choiceScales} on the parameters. We have everything in place
to merge \eqref{eq:weakFormShiftChainRule8} and \eqref{eq:weakFormShiftChainRule9}.

Testing \eqref{eq:weakFormShiftChainRule8} with the admissible test functions $\xi(\cdot,\cdot,y,s)$
from \eqref{eq:defTestFunction} for every $(y,s)\in \R^d{\times} (0,T^*)$, taking expectation and
integrating over $(y,s)\in \R^d{\times} (0,T^*)$, then repeating everything with \eqref{eq:weakFormShiftChainRule9}
based on the admissible test functions $\tilde\xi(\cdot,\cdot,x,t)$ from \eqref{eq:defTestFunction2} 
for every $(x,t)\in \R^d{\times} (0,T^*)$ and finally summing the two resulting inequalities
(using in particular that $\eta_\delta$ is even) yields an estimate of the form
\begin{align}
\label{eq:intermediateSummary}
&\E R_{\mathrm{dt}} \leq \E R_{\mathrm{porMed}} + \E R_{\mathrm{corr}} 
+ \E R_{\mathrm{noise}} + \E R_{\mathrm{error}}
\end{align}
for all $(\tau,\theta,l)$ subject to \eqref{eq:choiceScales}, all $\delta>0$ and all 
$\varepsilon,\hat\varepsilon\leq\frac{\kappa}{2}$. Here, we introduced for convenience
the abbreviations 
\begin{align*}
&R_{\mathrm{dt}} := 
-\int_0^{T^*}\int_{\R^d}\int_0^{T^*}\int_{\R^d} 
\eta_\delta\big(v_\varepsilon(x,t){-}v_{\hat\varepsilon}(y,s)\big)
(\partial_t{+}\partial_s)\xi(x,t,y,s) \dy \ds \dx \dt,
\\&
R_{\mathrm{porMed}}
\\&
:=-\int_0^{T^*}\int_{\R^d}\int_0^{T^*}\int_{\R^d}
|\nabla v_\varepsilon^m(x,t)|^2
(\eta_\delta)''\big(v_\varepsilon^m(x,t){-}v_{\hat\varepsilon}^m(y,s)\big)
\\&~~~~~~~~~~~~~~~~~~~~~~~~~~~~~~~~~~~~~~~~~~~~~~~~~~~\nonumber
\times\xi(x,t,y,s) \dx \dt \dy \ds
\\&~~~~
+\int_0^{T^*}\int_{\R^d}\int_0^{T^*}\int_{\R^d}
\eta_\delta\big(v_\varepsilon^m(x,t){-}v_{\hat\varepsilon}^m(y,s)\big)
\Delta_x\xi(x,t,y,s) \dx \dt \dy \ds
\\&~~~~
-\int_0^{T^*}\int_{\R^d}\int_0^{T^*}\int_{\R^d}
|\nabla v_{\hat\varepsilon}^m(x,t)|^2
(\eta_\delta)''\big(v_\varepsilon^m(x,t){-}v_{\hat\varepsilon}^m(y,s)\big)
\\&~~~~~~~~~~~~~~~~~~~~~~~~~~~~~~~~~~~~~~~~~~~~~~~~~~~\nonumber
\times\xi(x,t,y,s) \dy \ds \dx \dt
\\&~~~~
+\int_0^{T^*}\int_{\R^d}\int_0^{T^*}\int_{\R^d}
\eta_\delta\big(v_\varepsilon^m(x,t){-}v_{\hat\varepsilon}^m(y,s)\big)
\Delta_y\xi(x,t,y,s) \dy \ds \dx \dt,
\\&
R_{\mathrm{corr}}
:=-\int_0^{T^*}\int_{\R^d}\int_0^{T^*}\int_{\R^d} 
\frac{1}{2}\nu^2|\nabla v_\varepsilon(x,t)|^2
(\eta_\delta)''\big(v_\varepsilon(x,t){-}v_{\hat\varepsilon}(y,s)\big)
\\&~~~~~~~~~~~~~~~~~~~~~~~~~~~~~~~~~~~~~~~~~~~~~~~~~~~~~~~~~~~~~\nonumber
\times\xi(x,t,y,s) \dx \dt \dy \ds
\\&~~~~
+\int_0^{T^*}\int_{\R^d}\int_0^{T^*}\int_{\R^d}
\frac{1}{2}\nu^2\eta_\delta\big(v_\varepsilon(x,t){-}v_{\hat\varepsilon}(y,s)\big)
\Delta_x\xi(x,t,y,s) \dx \dt \dy \ds
\\&~~~~
-\int_0^{T^*}\int_{\R^d}\int_0^{T^*}\int_{\R^d} 
\frac{1}{2}\nu^2|\nabla v_{\hat\varepsilon}(x,t)|^2
(\eta_\delta)''\big(v_\varepsilon(x,t){-}v_{\hat\varepsilon}(y,s)\big)
\\&~~~~~~~~~~~~~~~~~~~~~~~~~~~~~~~~~~~~~~~~~~~~~~~~~~~\nonumber
\times\xi(x,t,y,s) \dy \ds \dx \dt
\\&~~~~
+\int_0^{T^*}\int_{\R^d}\int_0^{T^*}\int_{\R^d}
\frac{1}{2}\nu^2\eta_\delta\big(v_\varepsilon(x,t){-}v_{\hat\varepsilon}(y,s)\big)
\Delta_y\xi(x,t,y,s) \dy \ds \dx \dt,
\\&
R_{\mathrm{noise}} :=
-\int_0^{T^*}\int_{\R^d} \int_0^{T^*}\int_{\R^d} 
\nu \eta_\delta\big(v_\varepsilon(x,t){-}v_{\hat\varepsilon}(y,s)\big)
\nabla_x\xi(x,t,y,s) \dx \,\mathrm{d}B_t \dy \ds
\\&~~~~~~~~~~~~
-\int_0^{T^*}\int_{\R^d} \int_0^{T^*}\int_{\R^d} 
\nu \eta_\delta\big(v_\varepsilon(x,t){-}v_{\hat\varepsilon}(y,s)\big)
\nabla_y\xi(x,t,y,s) \dy \,\mathrm{d}B_s \dx \dt,
\end{align*}
as well as
\begin{align*}
&R_{\mathrm{error}}
\\&
:=\int_0^{T^*}\int_{\R^d} \int_0^{T^*}\int_{\R^d} 
\big\{(\eta_\delta)'\big(v_\varepsilon(x,t){-}v_{\hat\varepsilon}(y,s)\big)
{-}(\eta_\delta)'\big(v_\varepsilon^m(x,t){-}v_{\hat\varepsilon}^m(y,s)\big)\big\} 
\\&~~~~~~~~~~~~~~~~~~~~~~~~~~~~~~~~~~~~~~~~~~~~~~~~~~~~~~\nonumber
\times(\Delta_x v_\varepsilon^m)(x,t)\xi(x,t,y,s)
\dx \dt \dy \ds
\\&~~~~
+\int_0^{T^*}\int_{\R^d} \int_0^{T^*}\int_{\R^d}
\big\{(\eta_\delta)'\big(v_{\hat\varepsilon}(y,s){-}v_{\varepsilon}(x,t)\big)
-(\eta_\delta)'\big(v_{\hat\varepsilon}^m(y,s){-}v_{\varepsilon}^m(x,t)\big)\big\} 
\\&~~~~~~~~~~~~~~~~~~~~~~~~~~~~~~~~~~~~~~~~~~~~~~~~~~~~~~\nonumber
\times(\Delta_y v_{\hat\varepsilon}^m)(y,s)\tilde\xi(y,s,x,t) \dy \ds \dx \dt
\\&~~~~
-\int_0^{T^*}\int_{\R^d}\int_0^{T^*}\int_{\R^d}
m\big((u_\varepsilon^\leftarrow)^{m-1}{-}v_\varepsilon^{m-1}\big)(x,t)\nabla_x\xi(x,t,y,s)
\\&~~~~~~~~~~~~~~~~~~~~~~~~~~~~~~~~~\nonumber
\cdot\nabla v_\varepsilon(x,t)
(\eta_\delta)'\big(v_\varepsilon(x,t){-}v_{\hat\varepsilon}(y,s)\big) 
\dx \dt \dy \ds
\\&~~~~
-\int_0^{T^*}\int_{\R^d}\int_0^{T^*}\int_{\R^d}
m\big((u_{\hat\varepsilon}^\leftarrow)^{m-1}{-}v_{\hat\varepsilon}^{m-1}\big)(y,s)\nabla_y\tilde\xi(y,s,x,t)
\\&~~~~~~~~~~~~~~~~~~~~~~~~~~~~~~~~~\nonumber
\cdot\nabla v_{\hat\varepsilon}(y,s)
(\eta_\delta)'\big(v_{\hat\varepsilon}(y,s){-}v_{\varepsilon}(x,t)\big) 
\dy \ds \dx \dt.
\end{align*}
Before we move on with removing the doubling in the time variable by studying 
the limit $\tau\to 0$ let us first perform some computations on the non-linear
diffusion term $R_{\mathrm{porMed}}$ and the correction term $R_{\mathrm{corr}}$.
Exploiting that $(\eta_\delta)''\geq 0$ and $\xi\geq 0$, completing the square
$|\nabla v_{\varepsilon}^m|^2{+}|\nabla v_{\hat\varepsilon}^m|^2
=|\nabla v_{\varepsilon}^m{-}\nabla v_{\hat\varepsilon}^m|^2
+2\nabla v_{\varepsilon}^m\cdot\nabla v_{\hat\varepsilon}^m$ and
integrating by parts entails that
\begin{align*}
&-\int_{\R^d}\int_{\R^d} |\nabla v_\varepsilon^m(x,t)|^2
(\eta_\delta)''\big(v_\varepsilon^m(x,t){-}v_{\hat\varepsilon}^m(y,s)\big)
\xi \dx \dy 
\\&
-\int_{\R^d}\int_{\R^d}|\nabla v_{\hat\varepsilon}^m(x,t)|^2
(\eta_\delta)''\big(v_\varepsilon^m(x,t){-}v_{\hat\varepsilon}^m(y,s)\big)
\xi \dx \dy 
\\&
\leq \int_{\R^d}\int_{\R^d} 2\xi (\nabla_y\cdot\nabla_x)
\eta_\delta\big(v_\varepsilon^m(x,t){-}v_{\hat\varepsilon}^m(y,s)\big) \dx \dy
\\&
= \int_{\R^d}\int_{\R^d} \eta_\delta\big(v_\varepsilon^m(x,t){-}v_{\hat\varepsilon}^m(y,s)\big)
2(\nabla_x\cdot\nabla_y)\xi \dx \dy.
\end{align*}
Since $(\Delta_x{+}\Delta_y)(\gamma_\theta(x{-}y)\bar\xi_l(\frac{x{+}y}{2}))
=\gamma_\theta(x{-}y)\frac{1}{2}\Delta\bar\xi_l(\frac{x{+}y}{2})
{+}\bar\xi_l(\frac{x{+}y}{2})2\Delta\gamma_\theta(x{-}y)$ and
$(2\nabla_x\cdot\nabla_y)(\gamma_\theta(x{-}y)\bar\xi_l(\frac{x{+}y}{2}))
=\gamma_\theta(x{-}y)\frac{1}{2}\Delta\bar\xi_l(\frac{x{+}y}{2})
{-}\bar\xi_l(\frac{x{+}y}{2})2\Delta\gamma_\theta(x{-}y)$, we thus obtain the estimate
\begin{align*}
R_{\mathrm{porMed}}
&\leq \int_0^{T^*}\int_{\R^d}\int_0^{T^*}\int_{\R^d}
\eta_\delta\big(v_\varepsilon^m(x,t){-}v_{\hat\varepsilon}^m(y,s)\big)
\\&~~~~~~~~~~~~~~~~~~~~~~~~~~~~\nonumber
\times\rho_\tau(t{-}s)\varphi\Big(\frac{t{+}s}{2}\Big)
\gamma_\theta(x{-}y)\Delta\bar\xi_l\Big(\frac{x{+}y}{2}\Big) \dx \dt \dy \ds
\\&
=: R^{(1)}_{\mathrm{porMed}}.
\end{align*}

The idea eventually is---after letting $\tau\to 0$, $\delta\to 0$ and removing
the doubling in the spatial variables (the latter by fine-tuning the scales $\theta>0$ and
$l>0$ as suitably chosen powers of $\varepsilon\vee\hat\varepsilon$)---to integrate by
parts and to use the sign in condition~\textit{ii)} for the spatial test function $\bar\xi$.
We will make this precise together with all the required error estimates in a later 
stage of the proof. For the moment, we only wish to mention that the same argument
also applies to the term $R_{\mathrm{corr}}$; one simply substitutes $v_{\varepsilon}$
for $v_{\varepsilon}^m$ and $v_{\hat\varepsilon}$ for $v_{\hat\varepsilon}^m$, respectively.
This shows that
\begin{align*}
R_{\mathrm{corr}}
&\leq \int_0^{T^*}\int_{\R^d}\int_0^{T^*}\int_{\R^d}  \frac{1}{2}\nu^2
\eta_\delta\big(v_\varepsilon(x,t){-}v_{\hat\varepsilon}(y,s)\big)
\\&~~~~~~~~~~~~~~~~~~~~~~~~~~~~\nonumber
\times\rho_\tau(t{-}s)\varphi\Big(\frac{t{+}s}{2}\Big)
\gamma_\theta(x{-}y)\Delta\bar\xi_l\Big(\frac{x{+}y}{2}\Big) \dx \dt \dy \ds
\\&
=: R^{(1)}_{\mathrm{corr}}.
\end{align*}
The task therefore reduces to post-process the bound
\begin{align}
\label{eq:intermediateSummary2}
&\E R_{\mathrm{dt}} \leq \E R_{\mathrm{porMed}}^{(1)} + \E R_{\mathrm{corr}}^{(1)} 
+ \E R_{\mathrm{noise}} + \E R_{\mathrm{error}}
\end{align}
with the remaining three terms left unchanged from the estimate \eqref{eq:intermediateSummary}.
In a first step we aim to remove the doubling in the time variable by letting $\tau\to 0$.

\begin{lemma}
\label{lemma:limitTimeVariable}
Let the assumptions and notation of Subsection~\ref{subsec:proofContraction} 
until this point be in place. Define the quantities
\begin{align}
\label{eq:vanishingThetaRdt}
&R_{\mathrm{dt}}^{(1)}:=-\int_0^{T^*}\int_{\R^d}\int_{\R^d}
\eta_\delta\big(v_\varepsilon(x,t){-}v_{\hat\varepsilon}(y,t)\big)
\gamma_\theta(x{-}y)\bar\xi_l\Big(\frac{x{+}y}{2}\Big)\ddt\varphi(t)
\dy \dx \dt,
\\&\label{eq:vanishingThetaRporMed}
R_{\mathrm{porMed}}^{(2)}:=\int_0^{T^*}\int_{\R^d}\int_{\R^d} 
\eta_\delta\big(v_\varepsilon^m(x,t){-}v_{\hat\varepsilon}^m(y,t)\big)
\varphi(t)\gamma_\theta(x{-}y)\Delta\bar\xi_l\Big(\frac{x{+}y}{2}\Big) \dy \dx \dt,
\\&\label{eq:vanishingThetaRcorr}
R_{\mathrm{corr}}^{(2)}:=\int_0^{T^*}\int_{\R^d}\int_{\R^d} 
\frac{1}{2}\nu^2\eta_\delta\big(v_\varepsilon(x,t){-}v_{\hat\varepsilon}(y,t)\big)
\varphi(t)\gamma_\theta(x{-}y)\Delta\bar\xi_l\Big(\frac{x{+}y}{2}\Big) \dy \dx \dt,
\\&\label{eq:vanishingThetaRerror}
R_{\mathrm{error}}^{(1)}:=
\int_0^{T^*}\int_{\R^d}\int_{\R^d}
\big\{(\eta_\delta)'\big(v_\varepsilon(x,t){-}v_{\hat\varepsilon}(y,t)\big)
{-}(\eta_\delta)'\big(v_\varepsilon^m(x,t){-}v_{\hat\varepsilon}^m(y,t)\big)\big\} 
\\&~~~~~~~~~~~~~~~~~~~~~~~~~~~~~~~~~~~~~\nonumber
\times(\Delta_x v_\varepsilon^m)(x,t)\gamma_\theta(x{-}y)
\bar\xi_l\Big(\frac{x{+}y}{2}\Big)\varphi(t)\dy \dx \dt
\\&~~~~\nonumber
+\int_0^{T^*}\int_{\R^d}\int_{\R^d}
\big\{(\eta_\delta)'\big(v_{\hat\varepsilon}(y,t){-}v_{\varepsilon}(x,t)\big)
-(\eta_\delta)'\big(v_{\hat\varepsilon}^m(y,t){-}v_{\varepsilon}^m(x,t)\big)\big\} 
\\&~~~~~~~~~~~~~~~~~~~~~~~~~~~~~~~~~~~~~\nonumber
\times(\Delta_y v_{\hat\varepsilon}^m)(y,t)\gamma_\theta(x{-}y)
\bar\xi_l\Big(\frac{x{+}y}{2}\Big)\varphi(t)\dy \dx \dt
\\&~~~~\nonumber
-\int_0^{T^*}\int_{\R^d}\int_{\R^d}
m\big((u_\varepsilon^\leftarrow)^{m-1}{-}v_\varepsilon^{m-1}\big)(x,t)
\nabla_x\Big(\gamma_\theta(x{-}y)
\bar\xi_{l}\Big(\frac{x{+}y}{2}\Big)\Big)\varphi(t)
\\&~~~~~~~~~~~~~~~~~~~~~~~~~~~~~~~~~~~~~~~~~~~\nonumber
\cdot\nabla v_\varepsilon(x,t)
(\eta_\delta)'\big(v_\varepsilon(x,t){-}v_{\hat\varepsilon}(y,t)\big) 
\dy \dx \dt
\\&~~~~\nonumber
-\int_0^{T^*}\int_{\R^d}\int_{\R^d}
m\big((u_{\hat\varepsilon}^\leftarrow)^{m-1}{-}v_{\hat\varepsilon}^{m-1}\big)(y,t)
\nabla_y\Big(\gamma_\theta(x{-}y)
\bar\xi_{l}\Big(\frac{x{+}y}{2}\Big)\Big)\varphi(t)
\\&~~~~~~~~~~~~~~~~~~~~~~~~~~~~~~~~~~~~~~~~~~~\nonumber
\cdot\nabla v_{\hat\varepsilon}(y,t)
(\eta_\delta)'\big(v_{\hat\varepsilon}(y,t){-}v_{\varepsilon}(x,t)\big) 
\dy \dx \dt.
\end{align}
Then the estimate
\begin{align}
\label{eq:intermediateSummary3}
&\E R_{\mathrm{dt}}^{(1)} \leq \E R_{\mathrm{porMed}}^{(2)} 
+ \E R_{\mathrm{corr}}^{(2)} + \E R_{\mathrm{error}}^{(1)}
\end{align}
holds true for all $(\theta,l)$ subject to \eqref{eq:choiceScales}, all $\delta>0$ and all 
$\varepsilon,\hat\varepsilon\leq\frac{\kappa}{2}$.
\end{lemma}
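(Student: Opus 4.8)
The plan is to let $\tau\to 0$ in the estimate \eqref{eq:intermediateSummary2}, exploiting that $\rho_\tau(t{-}s)=\tfrac{1}{\tau}\rho(\tfrac{t-s}{\tau})$ is an approximate identity in the variable $t{-}s$ which, crucially, is supported in $\{0<t{-}s<\tau\}$. For the four ``deterministic'' contributions $R_{\mathrm{dt}},R_{\mathrm{porMed}}^{(1)},R_{\mathrm{corr}}^{(1)},R_{\mathrm{error}}$ this is a routine passage to the limit. For fixed $\varepsilon,\hat\varepsilon\leq\tfrac{\kappa}{2}$, Lemma~\ref{lem:classicalSolution}, the smoothness of $\zeta$, $\eta_\delta$ and $\gamma$, and the restriction \eqref{eq:choiceScales} guarantee that $(x,t)\mapsto v_\varepsilon(x,t)$, $(y,s)\mapsto v_{\hat\varepsilon}(y,s)$, together with the relevant powers, gradients and --- because the truncation $\zeta^{\varepsilon^q}_\kappa$ forces $v_\varepsilon^m$ to be \emph{constant} in a neighborhood of the moving interface ${-}\nu(B_t{-}B^\varepsilon_t){+}\partial D$ --- the Laplacians $\Delta_x v_\varepsilon^m$, are continuous in time with spatial profiles bounded uniformly in $\tau$ on the compact regions cut out by $\gamma_\theta$ and $\bar\xi_l$ (for the error term one may also invoke the integrability in \eqref{eq:boundLaplacianApprox}). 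Since in addition $(\partial_t{+}\partial_s)\rho_\tau(t{-}s)=0$, $(\partial_t{+}\partial_s)\varphi(\tfrac{t+s}{2})=\varphi'(\tfrac{t+s}{2})$, and $\Delta_x\xi{+}\Delta_y\xi$, $\nabla_x\xi{+}\nabla_y\xi$ collapse exactly as recorded immediately above the statement of Lemma~\ref{lemma:limitTimeVariable}, dominated convergence (also in $\omega$) shows that each of these four terms converges, as $\tau\to0$, to its counterpart $R_{\mathrm{dt}}^{(1)},R_{\mathrm{porMed}}^{(2)},R_{\mathrm{corr}}^{(2)},R_{\mathrm{error}}^{(1)}$, obtained by setting $s=t$ and replacing $\varphi(\tfrac{t+s}{2})$ by $\varphi(t)$.

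The one genuine point is that $\E R_{\mathrm{noise}}\to 0$ as $\tau\to 0$, and here the one-sidedness of $\rho$ enters decisively. In the $\mathrm{d}B_t$-part of $R_{\mathrm{noise}}$ the factor $\rho_\tau(t{-}s)$ forces $s<t$, so for each fixed $(y,s)$ the integrand $\nu\eta_\delta\big(v_\varepsilon(x,t){-}v_{\hat\varepsilon}(y,s)\big)\nabla_x\xi(x,t,y,s)$ is adapted --- it is $\F_t$-measurable at each $t$, since $v_{\hat\varepsilon}(y,s)$ is $\F_s$-measurable ($B^{\hat\varepsilon}$, hence $u_{\hat\varepsilon}$, being $\Filt$-adapted) --- as well as bounded and compactly supported; hence the stochastic integral in $t$ has vanishing expectation, and (stochastic) Fubini in $(y,s)$ shows this part contributes nothing, already for each $\tau>0$. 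In the $\mathrm{d}B_s$-part one again has $s<t$, so now $v_\varepsilon(x,t)$ anticipates $B$ at time $s$; to deal with this we split $\eta_\delta\big(v_\varepsilon(x,t){-}v_{\hat\varepsilon}(y,s)\big)$ into the $\F_s$-measurable part $\eta_\delta\big(v_\varepsilon(x,s){-}v_{\hat\varepsilon}(y,s)\big)$ and the remainder $\eta_\delta\big(v_\varepsilon(x,t){-}v_{\hat\varepsilon}(y,s)\big){-}\eta_\delta\big(v_\varepsilon(x,s){-}v_{\hat\varepsilon}(y,s)\big)$. The first contributes an adapted integrand, hence a stochastic integral of zero expectation; the remainder is, by $|(\eta_\delta)'|\leq 1$ from \eqref{eq:auxUniformBound}, bounded in modulus by $\sup_{|t-s|\leq\tau}\|v_\varepsilon(\cdot,t){-}v_\varepsilon(\cdot,s)\|_{L^\infty(\R^d)}$, which tends to $0$ as $\tau\to0$ for fixed $\varepsilon$ by the uniform continuity in time of $v_\varepsilon$ --- a consequence of $u_\varepsilon\in C^{2,1}_{x,t}(\bar D{\times}[0,T^*])$ and of the H\"{o}lder-in-time continuity \eqref{eq:HoelderContBM} of $B$, which governs the shift $x\mapsto x{+}\nu(B_t{-}B^\varepsilon_t)$. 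Combining this with It\^{o}'s isometry applied to the adapted piece, Cauchy--Schwarz, and the uniform bounds $\int_{\R^d}\gamma_\theta=1$, $\int_{\R^d}|\nabla\bar\xi_l|<\infty$, $0\leq\varphi\leq1$ bounds $\E$ of the $\mathrm{d}B_s$-part by a constant --- depending on $\varepsilon,\hat\varepsilon,\delta,\theta,l$ but not on $\tau$ --- times a quantity that vanishes as $\tau\to0$.

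Putting the two previous paragraphs together and passing to the limit $\tau\to0$ in \eqref{eq:intermediateSummary2} yields exactly \eqref{eq:intermediateSummary3}. The main obstacle is precisely the treatment of $R_{\mathrm{noise}}$: one has to recognize that the one-sidedness of $\rho$ turns the $\mathrm{d}B_t$-integral into a genuine martingale while leaving the $\mathrm{d}B_s$-integral anticipating only over a time window of length $\tau$, whose contribution is then killed in the limit by the quantitative time-regularity of the (fixed-$\varepsilon$) densities $v_\varepsilon$. Everything else is bookkeeping: keeping track of the ($\tau$-uniform, for frozen $\varepsilon,\hat\varepsilon,\delta,\theta,l$) $L^1$- and $L^2$-bounds needed to invoke dominated convergence, and checking that the truncation keeps all the relevant quantities away from the moving lateral boundary, so that no spurious boundary terms reappear once the doubling in the time variable is removed.
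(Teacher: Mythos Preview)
Your proposal is correct and follows essentially the same route as the paper. For the four deterministic blocks the argument is identical (dominated convergence, using that $(\partial_t{+}\partial_s)\rho_\tau(t{-}s)=0$), and for $R_{\mathrm{noise}}$ both proofs exploit the one-sided support of $\rho$ to make the $\mathrm{d}B_t$-part a genuine martingale, then kill the anticipating $\mathrm{d}B_s$-part by freezing $v_\varepsilon$ at a past time. The only differences are cosmetic: the paper freezes at $t{-}\tau$ rather than at $s$, and it inserts an auxiliary mollifier $\bar\rho_\lambda$ in the value variable (writing $\eta_\delta(v_\varepsilon{-}v_{\hat\varepsilon})$ as a convolution in an extra parameter) before applying Burkholder--Davis--Gundy and dominated convergence; this is the standard device from the entropy-solution literature to make the stochastic-Fubini step rigorous when the frozen quantity is itself random. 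Your direct splitting achieves the same end and is arguably cleaner, though you should be aware that the ``non-adapted remainder'' is only meaningful as the difference of two well-defined objects (the original noise term minus the adapted piece), not as a stochastic integral in its own right.
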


\begin{proof}
It follows from \eqref{eq:defTestFunction} that $(\partial_t{+}\partial_s)\xi
=\varphi'(\frac{t{+}s}{2})\rho_\tau(t{-}s)\gamma_\theta(x{-}y)\bar\xi_l(\frac{x{+}y}{2})$.
In particular, the singular terms as $\tau\to 0$ cancel. Hence, it follows by
Lebesgue's dominated convergence based on the regularity and the bounds for the Wong--Zakai approximation
$u_\varepsilon$ from Lemma~\ref{lem:classicalSolution} and definition~\eqref{eq:shift} of the
shifted densities~$u_\varepsilon^\leftarrow$ that
\begin{align}
\label{eq:vanishingThetaRdt2}
\E R_{\mathrm{dt}} \to \E R_{\mathrm{dt}}^{(1)}
\text{ as } \tau\to 0.
\end{align}
Relying again on Lebesgue's dominated convergence due to the regularity and the 
a priori estimates for the Wong--Zakai approximation
$u_\varepsilon$ from Lemma~\ref{lem:classicalSolution} and the definition \eqref{eq:shift} of the
shifted densities $u_\varepsilon^\leftarrow$, 
we may also easily pass to the limit $\tau\to 0$ in all the terms on the right hand side of \eqref{eq:intermediateSummary2}
except for the noise term~$R_{\mathrm{noise}}$. More precisely, we obtain
\begin{align}
\label{eq:vanishingThetaRporMed2}
\E R_{\mathrm{porMed}}^{(1)} &\to \E R_{\mathrm{porMed}}^{(2)} &&
\text{ as } \tau\to 0,
\\\label{eq:vanishingThetaRcorr2}
\E R_{\mathrm{corr}}^{(1)} &\to \E R_{\mathrm{corr}}^{(2)} &&
\text{ as } \tau\to 0,
\\\label{eq:vanishingThetaRerror2}
\E R_{\mathrm{error}} &\to \E R_{\mathrm{error}}^{(1)} &&
\text{ as } \tau\to 0.
\end{align} 

We proceed with the discussion of the noise term $R_{\mathrm{noise}}$. To this end, we define
for $\lambda>0$ the cut-off $\bar\rho_\lambda:=\frac{1}{\lambda}\bar\rho(\frac{\cdot}{\lambda})$
by means of a standard even cut-off function $\bar\rho\in C^\infty_{\mathrm{cpt}}((-1,1);[0,\infty))$
such that $\int_\R\bar\rho(r)\,\mathrm{d}r=1$. Using that the mollifier $\rho$ is supported on the 
positive real axis together with adding zero yields
for all $\tau>0$ subject to \eqref{eq:choiceScales}
\begin{align*}
&\E R_{\mathrm{noise}}
\\&
=-\E\int_0^{T^*}\int_{\R^d} \int_s^{s+\tau}\int_{\R^d}\int_\R 
\nu\bar\rho_\lambda(v_{\hat\varepsilon}(y,s){-}a) 
\eta_\delta\big(v_\varepsilon(x,t){-}a\big)
\nabla_x\xi \da \dx \,\mathrm{d}B_t \dy \ds
\\&~~~
-\E\int_0^{T^*}\int_{\R^d} \int_{t-\tau}^{t}\int_{\R^d}\int_\R 
\nu\bar\rho_\lambda(b{-}v_{\varepsilon}(x,t)) 
\eta_\delta\big(b{-}v_{\hat\varepsilon}(y,s)\big)
\nabla_y\xi \db \dy \,\mathrm{d}B_s \dx \dt
\\&
=: \E R_{\mathrm{noise}}^{(1)} + \E R_{\mathrm{noise}}^{(2)}. 
\end{align*}
Observe that $\E R_{\mathrm{noise}}^{(1)}=0$ since the bounded random variable $v_{\hat\varepsilon}(y,s)$
is measurable with respect to $\mathcal{F}_s$. Similarly, due to the fact that $v_{\varepsilon}(x,t{-}\tau)$
is bounded and measurable with respect to $\mathcal{F}_{t-\tau}$ we may actually write
\begin{align*}
\E R_{\mathrm{noise}}^{(2)}
&=-\E\int_0^{T^*}\int_{\R^d} \int_{t-\tau}^{t}\int_{\R^d}\int_\R 
\nu\big\{\bar\rho_\lambda(b{-}v_{\varepsilon}(x,t)) 
{-}\bar\rho_\lambda(b{-}v_{\varepsilon}(x,t{-}\tau)) \big\}
\\&~~~~~~~~~~~~~~~~~~~~~~~~~~~~~~~~~~~~~~~
\times\eta_\delta\big(b{-}v_{\hat\varepsilon}(y,s)\big)
\nabla_y\xi \db \dy \,\mathrm{d}B_s \dx \dt.
\end{align*}
Hence, it follows by an application of the Burkholder--Davis--Gundy inequality and then
Lebesgue's dominated convergence relying again on the regularity and bounds for the Wong--Zakai approximation
$u_\varepsilon$ from Lemma~\ref{lem:classicalSolution} and the definition \eqref{eq:shift} of the
shifted densities $u_\varepsilon^\leftarrow$ 
that
\begin{align*}
\E R_{\mathrm{noise}}^{(2)} \to 0 \text{ as } \tau \to 0.
\end{align*}

In summary, we obtain from this together with \eqref{eq:intermediateSummary2}, \eqref{eq:vanishingThetaRdt2},
\eqref{eq:vanishingThetaRporMed2}, \eqref{eq:vanishingThetaRcorr2} and
\eqref{eq:vanishingThetaRerror2} in the limit $\tau\to 0$ the desired estimate \eqref{eq:intermediateSummary3}. 
This concludes the proof of Lemma~\ref{lemma:limitTimeVariable}.
\end{proof}

We continue with the proof of Proposition~\ref{prop:L1contraction}
taking care in the next step of the error terms. More precisely,
we may derive the following bound.

\begin{lemma}
\label{lemma:boundErrorTerm}
Let the assumptions and notation of Subsection~\ref{subsec:proofContraction} 
until this point be in place. In particular, recall the definition of the
error term $R^{(1)}_{\mathrm{error}}$ from \eqref{eq:vanishingThetaRerror}. 
We then have the estimate
\begin{align}
\label{eq:boundRerror}
\E R^{(1)}_{\mathrm{error}} \leq C(\kappa)l^{-1}\theta^{-1}(\varepsilon\vee\hat\varepsilon)^q + O_{\delta\to 0}(1)
\end{align}
for all $(\theta,l)$ subject to \eqref{eq:choiceScales}, all $\delta>0$ and all 
$\varepsilon,\hat\varepsilon\leq\frac{\kappa}{2}$.
\end{lemma}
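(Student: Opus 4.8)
The plan is to split $R^{(1)}_{\mathrm{error}}$ into its four constituent integrals from \eqref{eq:vanishingThetaRerror}: the two ``diffusion--Laplacian'' terms carrying the factor $\Delta_x v_\varepsilon^m$ resp.\ $\Delta_y v_{\hat\varepsilon}^m$ (call them $I_1,I_2$), and the two ``cut--off gradient'' terms carrying $\nabla_x(\gamma_\theta\bar\xi_l)\cdot\nabla v_\varepsilon$ resp.\ $\nabla_y(\gamma_\theta\bar\xi_l)\cdot\nabla v_{\hat\varepsilon}$ (call them $I_3,I_4$). I will show $\E I_1,\E I_2 = O_{\delta\to 0}(1)$ and $\E I_3,\E I_4\leq C(\kappa)l^{-1}\theta^{-1}(\varepsilon\vee\hat\varepsilon)^q$; by the $\varepsilon\leftrightarrow\hat\varepsilon$ symmetry it suffices to treat $I_1$ and $I_3$. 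Throughout I use that $v_\varepsilon = \zeta^{\varepsilon^q}_\kappa\circ u_\varepsilon^\leftarrow$ with $u_\varepsilon^\leftarrow$ enjoying, for fixed $\varepsilon$, the interior regularity and a priori bounds of Lemma~\ref{lem:classicalSolution} (these are unaffected by the time-dependent spatial shift, which only moves the domain), that $v_\varepsilon\geq\kappa$, and that $\varepsilon\leq\tfrac{\kappa}{2}<\kappa$ so that near the moving interface $\partial D-\nu(B_t{-}B_t^\varepsilon)$ (where $u_\varepsilon^\leftarrow=\varepsilon$) the function $v_\varepsilon$, and hence also $v_\varepsilon^m$, is constant.

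For $I_1$ the decisive observation is the monotonicity identity $\mathrm{sign}(a-b)=\mathrm{sign}(a^m-b^m)$ for $a,b\geq 0$. Consequently, for every $(x,y,t)$ the bracket $(\eta_\delta)'\big(v_\varepsilon(x,t){-}v_{\hat\varepsilon}(y,t)\big)-(\eta_\delta)'\big(v_\varepsilon^m(x,t){-}v_{\hat\varepsilon}^m(y,t)\big)$ tends to $0$ as $\delta\to 0$: on $\{v_\varepsilon(x,t)=v_{\hat\varepsilon}(y,t)\}$ both terms vanish identically (since $\eta_\delta$ is even, $(\eta_\delta)'(0)=0$), and on the complement \eqref{eq:auxConvergence} forces both to converge to the same sign. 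Since $|(\eta_\delta)'|\leq C$ uniformly in $\delta$ by \eqref{eq:auxUniformBound}, the integrand is dominated, for fixed $\varepsilon,\theta,l$, by $C\,|\Delta_x v_\varepsilon^m(x,t)|\,\gamma_\theta(x{-}y)\,\varphi(t)$, whose integral over $\Omega\times\R^d_x\times\R^d_y\times(0,T^*)$ equals $C\,\E\int_0^{T^*}\varphi(t)\int_{\R^d}|\Delta_x v_\varepsilon^m(x,t)|\dx\dt$ after integrating out $y$. This is finite for each fixed $\varepsilon$: writing $v_\varepsilon^m=\psi_\varepsilon\circ u_\varepsilon^\leftarrow$ with $\psi_\varepsilon:=(\zeta^{\varepsilon^q}_\kappa(\cdot))^m$ smooth with bounded derivatives on the range $[\varepsilon,\varepsilon{+}\|u_0\|_{L^\infty(D)}]$ of $u_\varepsilon^\leftarrow$ and constant near the interface, one has $\Delta_x v_\varepsilon^m=\psi_\varepsilon''(u_\varepsilon^\leftarrow)|\nabla u_\varepsilon^\leftarrow|^2+\psi_\varepsilon'(u_\varepsilon^\leftarrow)\Delta u_\varepsilon^\leftarrow$, and both $|\nabla u_\varepsilon^\leftarrow|^2$ and $|\Delta u_\varepsilon^\leftarrow|$ are, after undoing the shift and using $u_\varepsilon\geq\varepsilon$ together with $\Delta u_\varepsilon^m=mu_\varepsilon^{m-1}\Delta u_\varepsilon+m(m{-}1)u_\varepsilon^{m-2}|\nabla u_\varepsilon|^2$, integrable on $\Omega\times D\times\supp\varphi$ by \eqref{eq:energyInequalityApprox} and \eqref{eq:boundLaplacianApprox} (the $t$-weight in \eqref{eq:boundLaplacianApprox} is harmless since $\supp\varphi\subset\subset(0,T^*)$). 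Dominated convergence then yields $\E I_1=O_{\delta\to 0}(1)$, and likewise $\E I_2=O_{\delta\to 0}(1)$.

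For $I_3$ I exploit the truncation scale $\varepsilon^q$. The prefactor $m\big((u_\varepsilon^\leftarrow)^{m-1}{-}v_\varepsilon^{m-1}\big)$ is multiplied by $\nabla v_\varepsilon=(\zeta^{\varepsilon^q}_\kappa)'(u_\varepsilon^\leftarrow)\nabla u_\varepsilon^\leftarrow$, supported in $\{u_\varepsilon^\leftarrow>\kappa\}$; there $u_\varepsilon^\leftarrow\vee\kappa=u_\varepsilon^\leftarrow$, and \eqref{auxConvergence21} applied on scale $\varepsilon^q$ gives $|u_\varepsilon^\leftarrow-v_\varepsilon|=|u_\varepsilon^\leftarrow\vee\kappa-\zeta^{\varepsilon^q}_\kappa(u_\varepsilon^\leftarrow)|\leq C\varepsilon^q$, so that by the local Lipschitz bound of $r\mapsto r^{m-1}$ on $[\kappa,\infty)$ one obtains $|m((u_\varepsilon^\leftarrow)^{m-1}{-}v_\varepsilon^{m-1})|\leq C(\kappa)\varepsilon^q$ on $\supp\nabla v_\varepsilon$. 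The remaining factors are controlled by $|(\eta_\delta)'|\leq C$, $\varphi\leq 1$, $\int_{\R^d}|\nabla_x(\gamma_\theta(x{-}y)\bar\xi_l(\tfrac{x+y}{2}))|\dy\leq\|\nabla\gamma_\theta\|_{L^1}+\tfrac12\|\nabla\bar\xi_l\|_{L^\infty}\leq C\theta^{-1}+Cl^{-1}$, and $\int_{\R^d}|\nabla v_\varepsilon(x,t)|\dx\leq|D|^{1/2}\|\nabla v_\varepsilon(\cdot,t)\|_{L^2(\R^d)}$ (using that $\nabla v_\varepsilon(\cdot,t)$ is supported in the translate $D-\nu(B_t{-}B_t^\varepsilon)$ of volume $|D|$). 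Taking expectation, applying Cauchy--Schwarz in $t$, and using $|\nabla v_\varepsilon|^2\leq\tfrac{1}{\kappa^{m-1}}(u_\varepsilon^\leftarrow)^{m-1}|\nabla u_\varepsilon^\leftarrow|^2$ on $\{u_\varepsilon^\leftarrow>\kappa\}$ reduces the space-time integral to $\E\int_0^{T^*}\!\int_D m u_\varepsilon^{m-1}|\nabla u_\varepsilon|^2\dx\dt\leq\tfrac12\|u_0{+}\varepsilon\|_{L^2(D)}^2\leq C(u_0)$ from \eqref{eq:energyInequalityApprox}, uniformly in $\varepsilon\leq 1$. Hence $\E|I_3|\leq C(\kappa)(\theta^{-1}{+}l^{-1})\varepsilon^q\leq C(\kappa)l^{-1}\theta^{-1}(\varepsilon\vee\hat\varepsilon)^q$, and analogously for $\E|I_4|$ with $\hat\varepsilon$; summing the four contributions gives \eqref{eq:boundRerror}.

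The genuinely delicate step is the $\delta\to 0$ passage in $I_1,I_2$: besides the elementary sign cancellation $\mathrm{sign}(a-b)=\mathrm{sign}(a^m-b^m)$, one must certify that the random, $\varepsilon$-dependent function $\Delta_x v_\varepsilon^m$ is integrable over $\Omega\times\R^d\times\supp\varphi$ so that dominated convergence applies, which is exactly where the interior regularity from Lemma~\ref{lem:classicalSolution}, the Laplacian estimate \eqref{eq:boundLaplacianApprox}, and the fact that $\zeta^{\varepsilon^q}_\kappa$ is constant near the moving interface (ruling out boundary terms and any loss of regularity) all come in. By contrast, the bound on $I_3,I_4$ is a routine estimate once the smallness $\lesssim_\kappa(\varepsilon\vee\hat\varepsilon)^q$ of the prefactor on $\supp\nabla v_\varepsilon$ has been read off from the choice of the cut-off scale $\varepsilon^q$ and the energy inequality \eqref{eq:energyInequalityApprox} has been invoked.
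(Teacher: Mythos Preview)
Your proof is correct and follows essentially the same approach as the paper: the sign identity $\mathrm{sign}(a{-}b)=\mathrm{sign}(a^m{-}b^m)$ together with dominated convergence (the paper inserts a H\"older step before invoking DCT, you apply DCT directly to the full integrand---both work) handles $I_1,I_2$, while the $\varepsilon^q$-smallness of $(u_\varepsilon^\leftarrow)^{m-1}-v_\varepsilon^{m-1}$ on $\supp\nabla v_\varepsilon$ combined with the energy estimate \eqref{eq:energyInequalityApprox} handles $I_3,I_4$. Your observation that $\theta^{-1}+l^{-1}\leq 2\,l^{-1}\theta^{-1}$ (since $\theta,l<1$ by \eqref{eq:choiceScales}) correctly recovers the stated form of the bound.
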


\begin{proof} 
We start estimating by H\"{o}lder's inequality
\begin{align*}
&\E\bigg|\int_0^{T^*}\int_{\R^d}\int_{\R^d} (\Delta_x v_\varepsilon^m)(x,t)
\gamma_\theta(x{-}y)\bar\xi_l\Big(\frac{x{+}y}{2}\Big)\varphi(t)
\\&~~~~~~~~~~~~~~~~~~~~~
\times\big\{(\eta_\delta)'\big(v_\varepsilon(x,t){-}v_{\hat\varepsilon}(y,t)\big)
{-}(\eta_\delta)'\big(v_\varepsilon^m(x,t){-}v_{\hat\varepsilon}^m(y,t)\big)\big\}
\dy \dx \dt\bigg|
\\&
\leq \bigg(\E\int_{0}^{T^*}\int_{\R^d}\int_{\R^d}
\big|(\eta_\delta)'\big(v_\varepsilon(x,t){-}v_{\hat\varepsilon}(y,t)\big)
{-}(\eta_\delta)'\big(v_\varepsilon^m(x,t){-}v_{\hat\varepsilon}^m(y,t)\big)\big|^2
\dy \dx \dt\bigg)^\frac{1}{2}
\\&~~~~
\times\frac{\|\varphi\|_{L^\infty(0,T^*)}}{(\inf\supp\varphi)^\frac{1}{2}}
\bigg(\E\int_0^{T^*}\int_{\R^d} t|(\Delta_x v_\varepsilon^m)(x,t)|^2 \dx\dt\bigg)^\frac{1}{2}
\|\bar\xi_l\|_{L^\infty(\R^d)}\|\gamma_\theta\|_{L^\infty(\R^d)}.
\end{align*}
However, by the bounds \eqref{eq:boundLaplacianApprox} and \eqref{eq:auxUniformBound}, 
the convergence \eqref{eq:auxConvergence} and the fact that
$\mathrm{sign}(a{-}b)=\mathrm{sign}(a^m{-}b^m)$ due to the monotonicity of
$r\mapsto r^m$ we infer by an application of Lebesgue's dominated convergence theorem
based on the regularity of the Wong--Zakai approximation $u_\varepsilon$ from
Lemma~\ref{lem:classicalSolution} that the term on the right hand side of the
latter bound vanishes as $\delta\to 0$.

Recall that $\kappa\leq v_\varepsilon\leq u_\varepsilon^\leftarrow$ on $\supp\nabla v_\varepsilon$.
Hence, we can estimate by means of \eqref{auxConvergence21}
\begin{align*}
&\chi_{\supp\nabla v_\varepsilon}
\big|(u_\varepsilon^\leftarrow)^{m-1}(x,t){-}v_\varepsilon^{m-1}(x,t)\big|
\\&
\leq \chi_{\supp\nabla v_\varepsilon}
\sup_{r\in [v_\varepsilon(x,t),u^\leftarrow_\varepsilon(x,t)]} 
 (m{-}1)r^{m-2} |v_\varepsilon(x,t){-}u_\varepsilon^\leftarrow(x,t)|
\\&
\leq C(\kappa)\chi_{\supp\nabla v_\varepsilon}\varepsilon^q.
\end{align*}
We may then estimate using also \eqref{eq:auxUniformBound1}, the definition \eqref{eq:shift}
of the shifted densities $u^\leftarrow_\varepsilon$, $\|\nabla\bar\xi_l\|_{L^\infty(\R^d)}\leq
\|\nabla\gamma_l\|_{L^1(\R^d)}\|\bar\xi\|_{L^\infty(\R^d)}\leq Cl^{-1}$ (which follows from Young's inequality and 
condition~\textit{i)} for the spatial cut-off $\bar\xi$),
$\|\varphi\|_{L^\infty(0,T^*)}\leq 1$, a change of variables $x\mapsto x{-}\nu(B_t{-}B^\varepsilon_t)$,
the fact that $u_\varepsilon^\leftarrow\geq \kappa$ on $\supp\nabla v_\varepsilon$,
and finally the a priori estimate \eqref{eq:energyInequalityApprox}
\begin{align*}
&\E\bigg|\int_{0}^{T^*}\int_{\R^d}\int_{\R^d} 
\big((u_\varepsilon^\leftarrow)^{m-1}(x,t){-}v_\varepsilon^{m-1}(x,t)\big)
\\&~~~~~~~~~~~~~~~~~~~~~
\times\nabla v_\varepsilon(x,t)\cdot\nabla_x\Big(\gamma_\theta(x{-}y)
\bar\xi_{l}\Big(\frac{x{+}y}{2}\Big)\Big)\varphi(t)\dy \dx \dt\bigg|
\\&
\leq Cl^{-1}\theta^{-1}\E\int_0^{T^*}\int_{\R^d}\chi_{\supp\nabla v_\varepsilon}
\big|(u_\varepsilon^\leftarrow)^{m-1}(x,t){-}v_\varepsilon^{m-1}(x,t)\big|
|\nabla u_\varepsilon^\leftarrow(x,t)| \dx \dt
\\&
\leq C(\kappa)l^{-1}\theta^{-1}\varepsilon^q\bigg(
\E\int_0^{T^*}\int_{D} mu_\varepsilon^{m-1}(x,t)|\nabla u_\varepsilon(x,t)|^2 \dx\dt\bigg)^\frac{1}{2}
\\&
\leq C(\kappa)l^{-1}\theta^{-1}\varepsilon^q.
\end{align*}

Since the other two terms of $R^{(1)}_{\mathrm{error}}$ can be treated along the same lines, 
we obtain in total the asserted estimate \eqref{eq:boundRerror}.
This concludes the proof of Lemma~\ref{lemma:boundErrorTerm}.
\end{proof}

We continue with the proof of Proposition~\ref{prop:L1contraction}.
We have by now everything in place to let $\delta\to 0$ in \eqref{eq:intermediateSummary3}.
A straightforward application of Lebesgue's dominated
convergence theorem based on the convergence in \eqref{eq:auxConvergence} and the regularity and bounds
for the Wong--Zakai approximations $u_\varepsilon$ from Lemma~\ref{lem:classicalSolution} shows that
by letting $\delta\to 0$ in \eqref{eq:intermediateSummary3} and using \eqref{eq:boundRerror} it holds
\begin{align}
\label{eq:intermediateSummary4}
&\E R_{\mathrm{dt}}^{(2)} \leq \E R_{\mathrm{porMed}}^{(3)} 
+ \E R_{\mathrm{corr}}^{(3)} + C(\kappa)l^{-1}\theta^{-1}(\varepsilon\vee\hat\varepsilon)^q
\end{align}
for all $(\theta,l)$ subject to \eqref{eq:choiceScales}, and all 
$\varepsilon,\hat\varepsilon\leq\frac{\kappa}{2}$. The updated terms in this inequality 
are given by
\begin{align*}
R_{\mathrm{dt}}^{(2)} &:= -\int_0^{T^*}\int_{\R^d}\int_{\R^d}
\big|v_\varepsilon(x,t){-}v_{\hat\varepsilon}(y,t)\big|
\gamma_\theta(x{-}y)\bar\xi_l\Big(\frac{x{+}y}{2}\Big)\ddt\varphi(t)
\dy \dx \dt,
\\
R_{\mathrm{porMed}}^{(3)} &:= \int_0^{T^*}\int_{\R^d}\int_{\R^d}
\big|v_\varepsilon^m(x,t){-}v_{\hat\varepsilon}^m(y,t)\big|
\varphi(t)\gamma_\theta(x{-}y)\Delta\bar\xi_l\Big(\frac{x{+}y}{2}\Big) \dy \dx \dt,
\\
R_{\mathrm{corr}}^{(3)} &:= \int_0^{T^*}\int_{\R^d}\int_{\R^d} 
\frac{1}{2}\nu^2\big|v_\varepsilon(x,t){-}v_{\hat\varepsilon}(y,t)\big|
\varphi(t)\gamma_\theta(x{-}y)\Delta\bar\xi_l\Big(\frac{x{+}y}{2}\Big) \dy \dx \dt,
\end{align*}
respectively. We may proceed with the next step which consists of removing 
the doubling in the spatial variables. More precisely, the following holds true.

\begin{lemma}
\label{lemma:removeDoublingSpace}
Let the assumptions and notation of Subsection~\ref{subsec:proofContraction} 
until this point be in place. Define the quantities
\begin{align*}
R_{\mathrm{dt}}^{(3)} &:= -\int_0^{T^*}\int_{\R^d}\int_{\R^d}
\big|v_\varepsilon(x,t){-}v_{\hat\varepsilon}(y,t)\big|
\gamma_\theta(x{-}y)\bar\xi_l(x)\ddt\varphi(t)
\dy \dx \dt,
\\
R_{\mathrm{porMed}}^{(5)} &:= \int_0^{T^*}\varphi(t)\int_{\R^d} 
\big|v_\varepsilon^m(x,t){-}v_{\hat\varepsilon}^m(x,t)\big|
\Delta\bar\xi_l(x) \dx \dt,
\\
R_{\mathrm{corr}}^{(5)} &:= \int_0^{T^*}\varphi(t)\int_{\R^d}\frac{1}{2}\nu^2
\big|v_\varepsilon(x,t){-}v_{\hat\varepsilon}(x,t)\big|
\Delta\bar\xi_l(x) \dx \dt.
\end{align*}
We then have the estimate
\begin{align}
\label{eq:intermediateSummary7}
\E R_{\mathrm{dt}}^{(3)} &\leq \E R_{\mathrm{porMed}}^{(5)} 
+ \E R_{\mathrm{corr}}^{(5)} + C(\kappa)l^{-1}\theta^{-1}(\varepsilon\vee\hat\varepsilon)^q
+ C(\kappa)l^{-3}\theta
\end{align}
for all $\varphi\in C^\infty_{\mathrm{cpt}}((0,T^*);[0,1])$ 
with $\|\varphi\|_{W^{1,1}(0,T^*)}\leq \bar C$, all $(\theta,l)$ subject to \eqref{eq:choiceScales} 
and all $\varepsilon,\hat\varepsilon\leq\frac{\kappa}{2}$.
\end{lemma}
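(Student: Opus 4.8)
The plan is to descend from the bound \eqref{eq:intermediateSummary4}, whose spatial weight is $\gamma_\theta(x{-}y)\bar\xi_l(\frac{x+y}{2})$, to the single-variable estimate \eqref{eq:intermediateSummary7} in two elementary moves: first replacing the center argument $\frac{x+y}{2}$ of $\bar\xi_l$ (respectively of $\Delta\bar\xi_l$) by $x$, and then collapsing the remaining $y$-integration on the right-hand side by undoing the mollification $\gamma_\theta$. Throughout I will use the uniform bounds $\kappa \leq v_\varepsilon \leq \kappa\vee(\varepsilon{+}\|u_0\|_{L^\infty(D)})=:C(\kappa,u_0)$ coming from \eqref{boundMaximumPrinciple} and the definition $v_\varepsilon = \zeta^{\varepsilon^q}_\kappa\circ u^\leftarrow_\varepsilon$ (and analogously for $v_{\hat\varepsilon}$), so that every integrand $|v_\varepsilon{-}v_{\hat\varepsilon}|$, $|v_\varepsilon^m{-}v_{\hat\varepsilon}^m|$ entering \eqref{eq:intermediateSummary4} is bounded by $C(\kappa,u_0)$; moreover, writing $\bar\xi_l=\gamma_l*\bar\xi$ and combining properties \textit{i)} and \textit{iii)} of $\bar\xi$ with the mollifier scaling gives $\|\nabla\bar\xi_l\|_{L^\infty}\leq Cl^{-1}$, $\|\Delta\bar\xi_l\|_{L^\infty}\leq Cl^{-2}$ and $\|\nabla\Delta\bar\xi_l\|_{L^\infty}\leq Cl^{-3}$, while the hypothesis $\|\varphi\|_{W^{1,1}(0,T^*)}\leq\bar C$ keeps $\|\ddt\varphi\|_{L^1}$ and $\|\varphi\|_{L^1}$ under control.

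For the first move I would replace $\bar\xi_l(\frac{x+y}{2})$ by $\bar\xi_l(x)$ in $R^{(2)}_{\mathrm{dt}}$ (this produces $R^{(3)}_{\mathrm{dt}}$, which still carries $\gamma_\theta(x{-}y)$ and the $y$-integral), and $\Delta\bar\xi_l(\frac{x+y}{2})$ by $\Delta\bar\xi_l(x)$ in $R^{(3)}_{\mathrm{porMed}}$ and $R^{(3)}_{\mathrm{corr}}$. Since $|\frac{x+y}{2}-x|\leq\frac{\theta}{2}$ on $\supp\gamma_\theta(x{-}y)$, a first-order Taylor expansion bounds the discrepancies by $\frac{\theta}{2}\|\nabla\bar\xi_l\|_{L^\infty}$ and $\frac{\theta}{2}\|\nabla\Delta\bar\xi_l\|_{L^\infty}$ respectively; together with the pointwise bounds on the integrands, $\int_{\R^d}\gamma_\theta(x{-}y)\dy=1$, the bounded time support, and the $L^1$-bounds on $\varphi$ and $\ddt\varphi$, each replacement costs at most $C(\kappa,u_0)l^{-3}\theta$ (the exponent $-3$ being the worst case, from the two second-derivative terms; the left-hand side error is only $O(l^{-1}\theta)$ and is absorbed). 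Hence \eqref{eq:intermediateSummary4} turns into $\E R^{(3)}_{\mathrm{dt}} \leq \E R^{(4)}_{\mathrm{porMed}} + \E R^{(4)}_{\mathrm{corr}} + C(\kappa)l^{-1}\theta^{-1}(\varepsilon\vee\hat\varepsilon)^q + C(\kappa)l^{-3}\theta$, where $R^{(4)}_{\mathrm{porMed}}$ and $R^{(4)}_{\mathrm{corr}}$ agree with $R^{(5)}_{\mathrm{porMed}}$ and $R^{(5)}_{\mathrm{corr}}$ except that they still contain $\int_{\R^d}\gamma_\theta(x{-}y)(\,\cdot\,)\dy$ with $v^m_{\hat\varepsilon}(y,t)$ (resp.\ $v_{\hat\varepsilon}(y,t)$) in place of $v^m_{\hat\varepsilon}(x,t)$ (resp.\ $v_{\hat\varepsilon}(x,t)$).

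The second move collapses this last $y$-integration on the right. In $R^{(4)}_{\mathrm{porMed}}$ I would split $|v^m_\varepsilon(x,t){-}v^m_{\hat\varepsilon}(y,t)| = |v^m_\varepsilon(x,t){-}v^m_{\hat\varepsilon}(x,t)| + r(x,y,t)$ with $|r|\leq|v^m_{\hat\varepsilon}(x,t){-}v^m_{\hat\varepsilon}(y,t)|$; the first summand integrates, using $\int\gamma_\theta=1$, exactly to $R^{(5)}_{\mathrm{porMed}}$, and the remainder, estimated via $|x{-}y|\leq\theta$ on $\supp\gamma_\theta(x{-}y)$ and the fundamental theorem of calculus by $\theta\int\gamma_\theta(x{-}y)\int_0^1|\nabla v^m_{\hat\varepsilon}(x{+}s(y{-}x),t)|\ds\dy$, contributes after taking $\E$, multiplying by $\varphi(t)|\Delta\bar\xi_l(x)|$ (using $\varphi\leq1$), and applying Fubini and a change of variables that transfers the $\gamma_\theta$-average onto $\nabla v^m_{\hat\varepsilon}$, at most $C\theta\|\Delta\bar\xi_l\|_{L^\infty}\,\E\int_0^{T^*}\!\int_{\R^d}|\nabla v^m_{\hat\varepsilon}|\dx\dt$. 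The crucial observation is that this last expectation is bounded by $C(\kappa,u_0,T^*,D)$ uniformly in $\hat\varepsilon$: because $\nabla v^m_{\hat\varepsilon} = m v^{m-1}_{\hat\varepsilon}(\zeta^{\hat\varepsilon^q}_\kappa)'(u^\leftarrow_{\hat\varepsilon})\nabla u^\leftarrow_{\hat\varepsilon}$ is supported on $\{u^\leftarrow_{\hat\varepsilon}\geq\kappa\}$, where $\kappa\leq v_{\hat\varepsilon}\leq u^\leftarrow_{\hat\varepsilon}\leq\hat\varepsilon{+}\|u_0\|_{L^\infty}$ and $|(\zeta^{\hat\varepsilon^q}_\kappa)'|\leq C$ by \eqref{eq:auxUniformBound1}, undoing the shift \eqref{eq:shift} by a change of variables, estimating $|\nabla u_{\hat\varepsilon}|\leq(m\kappa^{m-1})^{-1/2}(m u^{m-1}_{\hat\varepsilon}|\nabla u_{\hat\varepsilon}|^2)^{1/2}$ on $\{u_{\hat\varepsilon}\geq\kappa\}$, and applying Cauchy--Schwarz over the fixed bounded domain $D$ in both space and time reduces it to the deterministic energy estimate \eqref{eq:energyInequalityApprox}, whose right-hand side $\int_D\frac12|u_{\hat\varepsilon}(0)|^2\dx$ depends only on $u_0$. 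Thus the remainder is $\leq C(\kappa,u_0)l^{-2}\theta$, which is absorbed into the present $C(\kappa)l^{-3}\theta$; the term $R^{(4)}_{\mathrm{corr}}$ is handled the same way with $v^m$ replaced by $v$ throughout. Combining the two moves yields \eqref{eq:intermediateSummary7}.

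The one step I expect to be the genuine obstacle is this uniform-in-$\hat\varepsilon$ control of $\E\int\!\int|\nabla v^m_{\hat\varepsilon}|$: the energy estimate \eqref{eq:energyInequalityApprox} only bounds the degenerate quantity $\int m u^{m-1}_{\hat\varepsilon}|\nabla u_{\hat\varepsilon}|^2$, which by itself does not dominate $\int|\nabla u_{\hat\varepsilon}|$ as $\hat\varepsilon\to0$. It is precisely the $\kappa$-truncation $\zeta^{\hat\varepsilon^q}_\kappa$ --- which forces $\supp\nabla v_{\hat\varepsilon}$ into the region $\{u^\leftarrow_{\hat\varepsilon}\geq\kappa\}$, on which the weight $u^{m-1}_{\hat\varepsilon}$ is bounded below by $\kappa^{m-1}>0$ --- that removes the degeneracy and lets the $y$-collapse go through with a clean power of $\theta$; this is the structural reason the truncation was built in. Everything else is bookkeeping of the scales $l^{-1},l^{-2},l^{-3}$ and routine Fubini and dominated-convergence arguments based on the regularity from Lemma~\ref{lem:classicalSolution}.
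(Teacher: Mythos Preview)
Your proposal is correct and follows essentially the same two-step route as the paper: first replace $\bar\xi_l\big(\tfrac{x+y}{2}\big)$ (resp.\ $\Delta\bar\xi_l\big(\tfrac{x+y}{2}\big)$) by $\bar\xi_l(x)$ (resp.\ $\Delta\bar\xi_l(x)$) at the cost $C l^{-3}\theta\|\varphi\|_{W^{1,1}}$, then collapse the remaining $y$-mollification at cost $C(\kappa)l^{-2}\theta$ via the fundamental theorem of calculus and the energy bound \eqref{eq:energyInequalityApprox}. Your identification of the $\kappa$-truncation as the mechanism that restores a uniform lower bound $u^{m-1}_{\hat\varepsilon}\geq\kappa^{m-1}$ on $\supp\nabla v_{\hat\varepsilon}$, and hence lets Cauchy--Schwarz reduce $\E\int|\nabla v_{\hat\varepsilon}^m|$ to \eqref{eq:energyInequalityApprox}, is exactly the point the paper exploits.
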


\begin{proof}
By a straightforward estimate, we may replace in all three terms $\bar\xi_l(\frac{\cdot{-}y}{2})$ 
by $\bar\xi_l(\cdot)$ and therefore update the estimate \eqref{eq:intermediateSummary4} to
\begin{align}
\label{eq:intermediateSummary5}
&\E R_{\mathrm{dt}}^{(3)} \leq \E R_{\mathrm{porMed}}^{(4)} 
+ \E R_{\mathrm{corr}}^{(4)} + C(\kappa)l^{-1}\theta^{-1}(\varepsilon\vee\hat\varepsilon)^q
+ Cl^{-3}\theta\|\varphi\|_{W^{1,1,}(0,T^*)}
\end{align}
for all $(\theta,l)$ subject to \eqref{eq:choiceScales} and all 
$\varepsilon,\hat\varepsilon\leq\frac{\kappa}{2}$, where 
\begin{align*}
R_{\mathrm{porMed}}^{(4)} &:= \int_0^{T^*}\int_{\R^d}\int_{\R^d}
\big|v_\varepsilon^m(x,t){-}v_{\hat\varepsilon}^m(y,t)\big|
\varphi(t)\gamma_\theta(x{-}y)\Delta\bar\xi_l(x) \dy \dx \dt,
\\
R_{\mathrm{corr}}^{(4)} &:= \int_0^{T^*}\int_{\R^d}\int_{\R^d} 
\frac{1}{2}\nu^2\big|v_\varepsilon(x,t){-}v_{\hat\varepsilon}(y,t)\big|
\varphi(t)\gamma_\theta(x{-}y)\Delta\bar\xi_l(x) \dy \dx \dt,
\end{align*}
respectively. We estimate using $\|\Delta\bar\xi_l\|_{L^\infty(\R^d)}\leq
\|\nabla^2\gamma_l\|_{L^1(\R^d)}\|\bar\xi\|_{L^\infty(\R^d)}\leq Cl^{-2}$ (which follows from Young's inequality and 
condition~\textit{i)} for the spatial cut-off $\bar\xi$),
a change of variables $y\mapsto y{+}x$, $u_\varepsilon^\leftarrow\geq \kappa$ on $\supp\nabla v_\varepsilon$,
and the bounds \eqref{eq:auxUniformBound1} resp.\ \eqref{eq:energyInequalityApprox}
\begin{align*}
&\bigg|\E R_{\mathrm{corr}}^{(4)} - \E\int_0^{T^*}\int_{\R^d}
\frac{1}{2}\nu^2\big|v_\varepsilon(x,t){-}v_{\hat\varepsilon}(x,t)\big|
\varphi(t)\Delta\bar\xi_l(x)\dx \dt \bigg|
\\&
\leq C\E\int_0^{T^*}\int_{\R^d}\int_{\R^d}\big|v_{\hat\varepsilon}(y,t){-}v_{\hat\varepsilon}(x,t)\big| 
\gamma_\theta(x{-}y)|\Delta\bar\xi_l(x)|\varphi(t)\dy \dx \dt
\\&
\leq Cl^{-2}\theta\E\int_0^{T^*}\int_{\R^d}\int_{\R^d} \gamma_\theta(x{-}y)
\int_0^1 |\nabla v_{\hat\varepsilon}(rx{+}(1{-}r)y,t)| \,\mathrm{d}r \dy \dx \dt
\\&
\leq C(\kappa)l^{-2}\theta \E \int_0^1\int_0^{T^*}\int_{\R^d}\int_{\R^d} \gamma_\theta(y)
(u_{\hat\varepsilon}^\leftarrow)^\frac{m-1}{2}(x{-}ry,t)
|\nabla u_{\hat\varepsilon}^\leftarrow(x{-}ry,t)| \dx \dy \dt  \,\mathrm{d}r
\\&
\leq C(\kappa)l^{-2}\theta \bigg(\E \int_0^{T^*}\int_D
m u_{\hat\varepsilon}^{m-1}(x,t)|\nabla u_{\hat\varepsilon}(x,t)|^2 \dx \dt\bigg)^\frac{1}{2} 
\\&
\leq C(\kappa)l^{-2}\theta
\end{align*}
for all $(\theta,l)$ subject to \eqref{eq:choiceScales} and all 
$\varepsilon,\hat\varepsilon\leq\frac{\kappa}{2}$. Arguing
along the same lines we also get the estimate
\begin{align*}
\bigg|\E R_{\mathrm{porMed}}^{(4)} - \E\int_0^{T^*}\int_{\R^d} 
\big|v_\varepsilon^m(x,t){-}v_{\hat\varepsilon}^m(x,t)\big|
\varphi(t)\Delta\bar\xi_l(x) \dx \dt\bigg|
\leq C(\kappa)l^{-2}\theta
\end{align*}
for all $(\theta,l)$ subject to \eqref{eq:choiceScales} and all 
$\varepsilon,\hat\varepsilon\leq\frac{\kappa}{2}$.
Summarizing we then obtain the desired inequality \eqref{eq:intermediateSummary7}.
This concludes the proof of Lemma~\ref{lemma:removeDoublingSpace}.
\end{proof}

We continue with the proof of Proposition~\ref{prop:L1contraction}.
The next step takes care of estimating the non-linear diffusion term and the correction term.

\begin{lemma}
\label{lemma:estimateDiffusionCorrection}
Let the assumptions and notation of Subsection~\ref{subsec:proofContraction} 
until this point be in place. In particular, recall from the statement of Lemma~\ref{lemma:removeDoublingSpace}
the definition of the quantities $R_{\mathrm{porMed}}^{(5)}$ and $R_{\mathrm{corr}}^{(5)}$.
Then there exists some $l(\bar\xi)>0$ small enough and some absolute constant $\bar C>0$ 
such that we have the estimate
\begin{align}
\label{eq:intermediateSummary80}
\E R_{\mathrm{porMed}}^{(5)} + \E R_{\mathrm{corr}}^{(5)}
\leq \bar{C}\kappa + C(\kappa)l^{-2}(\varepsilon\vee\hat\varepsilon)^\alpha
\end{align}
for all $(\theta,l)$ subject to \eqref{eq:choiceScales}
resp.\ $l<l(\bar\xi)$, and all $\varepsilon,\hat\varepsilon\leq\frac{\kappa}{2}$. 
\end{lemma}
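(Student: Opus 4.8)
The plan is to exploit the sign of $\Delta\bar\xi_l$ away from $\partial D$ and to localise the only uncontrolled part to a thin collar of $\partial D$. Condition~\textit{ii)} for $\bar\xi{=}\bar\xi_h$, tested against $C^\infty_{\mathrm{cpt}}(D;[0,\infty))$, says that $-\Delta\bar\xi_h\geq 0$ as a distribution in $D$; extending $\bar\xi_h$ by $0$ to $\R^d$ only adds a surface contribution along $\partial D$ to $-\Delta\bar\xi_h$, and convolution with $\gamma_l$ spreads that contribution within distance $l$ of $\partial D$ only, so that $\Delta\bar\xi_l\leq 0$ on $\{x\colon\mathrm{dist}(x,\partial D)>l\}$ while $\Delta\bar\xi_l\equiv 0$ on $\{\mathrm{dist}(\cdot,\partial D)\geq h{+}l\}$. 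Since $\varphi\geq 0$ with $\|\varphi\|_{L^1(0,T^*)}\leq\bar C$, and since $|v_\varepsilon^m{-}v_{\hat\varepsilon}^m|$ and $|v_\varepsilon{-}v_{\hat\varepsilon}|$ are non-negative, the part of $\E R^{(5)}_{\mathrm{porMed}}{+}\E R^{(5)}_{\mathrm{corr}}$ coming from the region $\{\mathrm{dist}(x,\partial D)>l\}$ is non-positive. Using $\|\Delta\bar\xi_l\|_{L^\infty(\R^d)}\leq\|\nabla^2\gamma_l\|_{L^1(\R^d)}\|\bar\xi\|_{L^\infty}\leq Cl^{-2}$ and $|a^m{-}b^m|\leq(a^m{-}\kappa^m){+}(b^m{-}\kappa^m)$ for $a,b\geq\kappa$, the assertion \eqref{eq:intermediateSummary80} thus reduces to
\begin{equation*}
Cl^{-2}\,\E\int_0^{T^*}\!\!\int_{\{\mathrm{dist}(x,\partial D)\leq l\}}\!\big((v_\varepsilon^m{-}\kappa^m)(x,t){+}(v_{\hat\varepsilon}^m{-}\kappa^m)(x,t)\big)\dx\dt\ \leq\ \bar C\kappa+C(\kappa)l^{-2}(\varepsilon{\vee}\hat\varepsilon)^\alpha
\end{equation*}
(and to the analogous statement with $v^m{-}\kappa^m$ replaced by $v{-}\kappa$, which governs $R^{(5)}_{\mathrm{corr}}$).

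I would bound this collar term in three steps. Replacing the smooth truncation $v_\varepsilon{=}\zeta^{\varepsilon^q}_\kappa\circ u^\leftarrow_\varepsilon$ by $\kappa{\vee}u^\leftarrow_\varepsilon$ costs, by \eqref{auxConvergence21}, only $C(\kappa)(\varepsilon{\vee}\hat\varepsilon)^q$ pointwise, hence $C(\kappa)l^{-1}(\varepsilon{\vee}\hat\varepsilon)^q\leq C(\kappa)l^{-2}(\varepsilon{\vee}\hat\varepsilon)^\alpha$ over the $O(l)$-collar, provided the exponent $q{>}1$ from Subsection~\ref{subsec:proofContraction} is now fixed large enough. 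Next, since $(\kappa{\vee}u^\leftarrow_\varepsilon)^m{-}\kappa^m{=}\big((u^\leftarrow_\varepsilon)^m{-}\kappa^m\big)_+$ vanishes where $u^\leftarrow_\varepsilon\leq\kappa$, undoing the shift $x\mapsto x{-}\nu(B_t{-}B^\varepsilon_t)$ and invoking $\nu\sup_t|B_t{-}B^\varepsilon_t|\leq\nu\mathcal{C}_\alpha(\varepsilon{\vee}\hat\varepsilon)^\alpha$ (from \eqref{eq:HoelderContBM}, with $\mathcal{C}_\alpha$ square integrable) confines the remaining mass to $\{\mathrm{dist}(x,\partial D)\leq l{+}\nu\mathcal{C}_\alpha(\varepsilon\vee\hat\varepsilon)^\alpha\}\cap\{u_\varepsilon{>}\kappa\}$; on the sub-layer of width $\nu\mathcal{C}_\alpha(\varepsilon{\vee}\hat\varepsilon)^\alpha$ adjacent to $\partial D$, the crude bound $(u_\varepsilon^m{-}\kappa^m)_+\leq(\kappa{+}\|u_0\|_{L^\infty})^m{=}C(\kappa)$ and $|\{\mathrm{dist}(\cdot,\partial D)\leq w\}|\leq Cw$ (valid for a $C^2$ boundary) give, after taking expectations, exactly the term $C(\kappa)l^{-2}(\varepsilon{\vee}\hat\varepsilon)^\alpha$.

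The remaining step---the contribution of $(u_\varepsilon^m{-}\kappa^m)_+$ over the width-$l$ collar of $\partial D$---is the heart of the matter and, I expect, the main obstacle; this is precisely where the $\kappa$-truncation (the extra device suggested by F.~Cornalba) and the energy estimate \eqref{eq:energyInequalityApprox} come in. After the stochastic flow transformation $u_\varepsilon$ solves a genuine deterministic porous-medium equation on a rigidly translated copy of $D$; since $u_0$ is compactly supported in $D$ and the translations $\nu B^\varepsilon_t$, $\nu(B_t{-}B^\varepsilon_t)$ are uniformly small on $[0,T^*]$, the plan is to combine finite speed of propagation with a boundary barrier for this equation, uniform in $\varepsilon\leq\varepsilon_0(\kappa)$, to keep $u^\leftarrow_\varepsilon(\cdot,t)$ below the truncation level $\kappa$ throughout a fixed neighbourhood of $\partial D$; together with the energy bound $\int_0^{T^*}\int_{\{u_\varepsilon>\kappa\}}|\nabla u_\varepsilon|^2\leq C(\kappa)$ (immediate from \eqref{eq:energyInequalityApprox}) and a Poincar\'e inequality on the collar, this keeps the present contribution at the level $\bar C\kappa$. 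The genuinely delicate point is to make all of this uniform in $l$ and in $\varepsilon$ as $\varepsilon\to 0$, since boundary regularity for the transformed equation degenerates in that limit; granted this, choosing $l<l(\bar\xi)$ small enough to accommodate the barrier's neighbourhood and collecting the three contributions yields \eqref{eq:intermediateSummary80}.
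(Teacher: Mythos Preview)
Your decomposition through the sign of $\Delta\bar\xi_l$ on $\{\mathrm{dist}(\cdot,\partial D)>l\}$ is sound, and the first two reduction steps (passing from $v_\varepsilon$ to $\kappa\vee u^\leftarrow_\varepsilon$ and then undoing the shift at cost $C(\kappa)l^{-2}(\varepsilon\vee\hat\varepsilon)^\alpha$) are fine. The genuine gap is the third step, and it is not a mere technicality: your proposed barrier argument does not go through as stated. First, the claim that $\nu B^\varepsilon_t$ is uniformly small on $[0,T^*]$ is false---$B^\varepsilon$ is a mollification of Brownian motion and has size $O(\sqrt{T^*})$, not $o(1)$; only the increment $\nu(B_t-B^\varepsilon_t)$ is small. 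Second, even granting an energy bound $\smash{\int\!\!\int_{\{u_\varepsilon>\kappa\}}|\nabla u_\varepsilon|^2\leq C(\kappa)}$ and a one-dimensional Poincar\'e inequality on the width-$l$ collar (using $u_\varepsilon=\varepsilon<\kappa$ on $\partial D$), the best this yields is $\int_{\mathrm{collar}}(u_\varepsilon^m-\kappa^m)_+\leq C(\kappa)l^{3/2}$, whence the collar contribution is $C(\kappa)l^{-2}\cdot l^{3/2}=C(\kappa)l^{-1/2}$, which diverges as $l\to 0$ and is in no sense $\bar C\kappa$. A barrier that forces $u_\varepsilon\leq\kappa$ in a \emph{fixed} neighbourhood of $\partial D$, uniformly in $\varepsilon$, would indeed close the argument (the collar term would then vanish for $l$ small), but constructing one requires controlling the transport term $\nu\dot B^\varepsilon_t\cdot\nabla u_\varepsilon$ whose coefficient blows up like $\varepsilon^{-1}$; you have not indicated how to do this.

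The paper sidesteps the collar entirely, and the mechanism reveals what the $\kappa$-truncation is really for. The point is not that each $v_\varepsilon$ individually stays below $\kappa$ near $\partial D$; it is that the \emph{difference} $(\zeta^{\varepsilon^q}_\kappa\circ u_\varepsilon)^m-(\zeta^{\hat\varepsilon^q}_\kappa\circ u_{\hat\varepsilon})^m$ lies in $H^1_0(D)$, because both truncated functions equal $\kappa$ on $\partial D$ (where $u_\varepsilon=\varepsilon\leq\kappa/2$ and $u_{\hat\varepsilon}=\hat\varepsilon\leq\kappa/2$). So after first undoing the shift at the same cost $C(\kappa)l^{-2}(\varepsilon\vee\hat\varepsilon)^\alpha$, one integrates by parts once to obtain $-\int\nabla|{\cdots}|\cdot\nabla\bar\xi_l$, splits $\nabla\bar\xi_l=\nabla\bar\xi+\nabla(\bar\xi_l-\bar\xi)$, drops the $\nabla\bar\xi$ piece by property~\textit{ii)} (now applicable because the integrand is a non-negative $H^1_0$ function), and bounds the remainder by $\|\nabla(\bar\xi_l-\bar\xi)\|_{L^2}$ times the energy; choosing $l<l(\bar\xi)$ so that $\|\bar\xi_l-\bar\xi\|_{H^1}\leq\kappa^{(m+1)/2}$ yields $\bar C\kappa$ directly. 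No barrier, no collar estimate, no boundary regularity is needed.
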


\begin{proof}
We first aim to replace $v_\varepsilon$ resp.\ $v_{\hat\varepsilon}$
by $\zeta^{\varepsilon^q}_\kappa\circ u_{\varepsilon}$ resp.\ 
$\zeta^{\hat\varepsilon^q}_\kappa\circ u_{\hat\varepsilon}$ in the two terms
$R_{\mathrm{porMed}}^{(5)}$ and $R_{\mathrm{corr}}^{(5)}$. This can be done
by an estimation similar to the proof of Lemma~\ref{lemma:removeDoublingSpace}, this time using in particular
\eqref{eq:approxBrownianMotion}, \eqref{eq:HoelderContBM} and $\E C_\alpha^2<\infty$,
which yields the bound
\begin{align*}
&\E R_{\mathrm{porMed}}^{(5)} + \E R_{\mathrm{corr}}^{(5)} 
\\&
\leq-\E\int_0^{T^*}\varphi(t)\int_{\R^d}
|(\zeta^{\varepsilon^q}_\kappa\circ u_{\varepsilon})^m(x,t){-}
(\zeta^{\hat\varepsilon^q}_\kappa\circ u_{\hat\varepsilon})^m(x,t)|
\Delta\bar\xi_l(x) \dx \dt
\\&~~~
-\E\int_0^{T^*}\varphi(t)\int_{\R^d}\frac{1}{2}\nu^2
|(\zeta^{\varepsilon^q}_\kappa\circ u_{\varepsilon})(x,t)
{-}(\zeta^{\hat\varepsilon^q}_\kappa\circ u_{\hat\varepsilon})(x,t)|
\Delta\bar\xi_l(x) \dx \dt
\\&~~~
+C(\kappa)l^{-2}(\varepsilon\vee\hat\varepsilon)^\alpha.
\end{align*}
However, it follows from $\varepsilon,\hat\varepsilon\leq\frac{\kappa}{2}$
that $(\zeta^{\varepsilon^q}_\kappa\circ u_{\varepsilon})^m{-}
(\zeta^{\hat\varepsilon^q}_\kappa\circ u_{\hat\varepsilon})^m\in H^1_0(D)$
as well as $(\zeta^{\varepsilon^q}_\kappa\circ u_{\varepsilon}){-}
(\zeta^{\hat\varepsilon^q}_\kappa\circ u_{\hat\varepsilon})\in H^1_0(D)$.
An integration by parts together with condition~\textit{ii)}
of the spatial cut-off function $\bar\xi$ then entails
\begin{align*}
&\E R_{\mathrm{porMed}}^{(5)} + \E R_{\mathrm{corr}}^{(5)} 
\\&
\leq-\E\int_0^{T^*}\varphi(t)\int_{\R^d}
\nabla|(\zeta^{\varepsilon^q}_\kappa\circ u_{\varepsilon})^m(x,t){-}
(\zeta^{\hat\varepsilon^q}_\kappa\circ u_{\hat\varepsilon})^m(x,t)|
\cdot\nabla\bar\xi_l(x) \dx \dt
\\&~~~
-\E\int_0^{T^*}\varphi(t)\int_{\R^d}\frac{1}{2}\nu^2
\nabla|(\zeta^{\varepsilon^q}_\kappa\circ u_{\varepsilon})(x,t)
{-}(\zeta^{\hat\varepsilon^q}_\kappa\circ u_{\hat\varepsilon})(x,t)|
\cdot\nabla\bar\xi_l(x) \dx \dt
\\&~~~
+C(\kappa)l^{-2}(\varepsilon\vee\hat\varepsilon)^\alpha
\\&
\leq -\E\int_0^{T^*}\varphi(t)\int_{\R^d} 
\nabla|(\zeta^{\varepsilon^q}_\kappa\circ u_{\varepsilon})^m(x,t){-}
(\zeta^{\hat\varepsilon^q}_\kappa\circ u_{\hat\varepsilon})^m(x,t)|
\cdot\nabla(\bar\xi_l{-}\bar\xi)(x) \dx \dt
\\&~~~
-\E\int_0^{T^*}\varphi(t)\int_{\R^d}\frac{1}{2}\nu^2
\nabla|(\zeta^{\varepsilon^q}_\kappa\circ u_{\varepsilon})(x,t)
{-}(\zeta^{\hat\varepsilon^q}_\kappa\circ u_{\hat\varepsilon})(x,t)|
\cdot\nabla(\bar\xi_l{-}\bar\xi)(x) \dx \dt
\\&~~~
+C(\kappa)l^{-2}(\varepsilon\vee\hat\varepsilon)^\alpha.
\end{align*}
It remains to bound the two terms featuring the difference $\nabla\bar\xi_l{-}\nabla\bar\xi$.
By continuity of translations in $L^2$, however, together with the a priori estimate 
\eqref{eq:energyInequalityApprox} we find some small enough $l(\bar\xi)>0$ such that
\begin{align}
\label{eq:condition2Scales}
l<l(\bar\xi) \quad\Rightarrow\quad \|\bar\xi_l{-}\bar\xi\|_{H^1(\R^d)}\leq \kappa^{\frac{m+1}{2}},
\end{align}
holds true, and therefore in particular the desired estimate~\eqref{eq:intermediateSummary80}
for some absolute constant $\bar C>0$. 
\end{proof}

We continue with the proof of Proposition~\ref{prop:L1contraction}.
It is straightforward to estimate
\begin{align*}
&\bigg|{-}\E R_{\mathrm{dt}}^{(3)}{-}\E\int_0^{T^*}\ddt\varphi(t)\int_{\R^d}\int_{\R^d}
|v_\varepsilon(x,t){-}v_{\hat\varepsilon}(x,t)|\gamma_\theta(x{-}y)\bar\xi(x) \dy \dx \dt\bigg|
\leq \bar C\kappa 
\end{align*}
for all $l$ subject to \eqref{eq:condition2Scales} and all
$\varphi\in C^\infty_{\mathrm{cpt}}((0,T^*);[0,1])$ 
with $\|\varphi\|_{W^{1,1}(0,T^*)}\leq \bar C$. In summary, we thus obtain
together with \eqref{eq:intermediateSummary80} the estimate
\begin{align}
\nonumber
-\E R_{\mathrm{dt}}^{(4)}&:=-\E\int_0^{T^*}\ddt\varphi(t)\int_{\R^d}\int_{\R^d}
|v_\varepsilon(x,t){-}v_{\hat\varepsilon}(y,t)|
\gamma_\theta(x{-}y)\bar\xi(x) \dy \dx \dt
\\&~\label{eq:intermediateSummary8}
\leq C(\kappa)l^{-1}\theta^{-1}(\varepsilon\vee\hat\varepsilon)^q
+ C(\kappa)l^{-2}(\varepsilon\vee\hat\varepsilon)^\alpha
+ C(\kappa)l^{-3}\theta
+ \bar C\kappa
\end{align}
for all $\varphi\in C^\infty_{\mathrm{cpt}}((0,T^*);[0,1])$ 
with $\|\varphi\|_{W^{1,1}(0,T^*)}\leq \bar C$, all $(\theta,l)$ 
subject to \eqref{eq:choiceScales} resp.\ \eqref{eq:condition2Scales}, 
and all $\varepsilon,\hat\varepsilon\leq\frac{\kappa}{2}$.

In the next step we take care of the term $\E R_{\mathrm{dt}}^{(4)}$.
Employing the same argument leading to~\cite[(4.20)]{Dareiotis:2018}
(instead of using~\cite[Lemma 3.2]{Dareiotis:2018} to treat the initial
condition we can also rely on the continuity down to $t=0$ thanks to Lemma~\ref{lem:classicalSolution})
we infer that the estimate \eqref{eq:intermediateSummary8} entails the bound
(recall that the initial condition is deterministic)
\begin{align}
\nonumber
&\E\int_{D}\int_{D}
|v_\varepsilon(x,T){-}v_{\hat\varepsilon}(y,T)|
\gamma_\theta(x{-}y)\bar\xi(x) \dy \dx
\\&\label{eq:intermediateSummary9}
\leq \int_{D}\int_{D}
|v_\varepsilon(x,0){-}v_{\hat\varepsilon}(y,0)|
\gamma_\theta(x{-}y)\bar\xi(x) \dy \dx
\\&~~~\nonumber
+ C(\kappa)l^{-1}\theta^{-1}(\varepsilon\vee\hat\varepsilon)^q
+ C(\kappa)l^{-2}(\varepsilon\vee\hat\varepsilon)^\alpha
+ C(\kappa)l^{-3}\theta
+ \bar C\kappa
\end{align}
for all $T\in [0,T^*]$, all $(\theta,l)$ subject to \eqref{eq:choiceScales} resp.\ \eqref{eq:condition2Scales}, 
and all $\varepsilon,\hat\varepsilon\leq\frac{\kappa}{2}$. We then estimate for all $T\in[\kappa,T^*]$ 
by means of \eqref{eq:boundTimeDerivativeApprox}, \eqref{eq:auxUniformBound1} and $\|\bar\xi\|_{L^\infty(D)}\leq 1$
\begin{align*}
&\bigg|\E\int_{D}\int_{D}
|v_\varepsilon(x,T){-}v_{\hat\varepsilon}(y,T)|
\gamma_\theta(x{-}y)\bar\xi(x) \dy \dx
{-}\E\int_{D}|v_\varepsilon(x,T){-}v_{\hat\varepsilon}(x,T)|\bar\xi(x)\dx\bigg|
\\&
\leq\E\int_{D}\int_{D}|v_{\hat\varepsilon}(y{+}x,T){-}v_{\hat\varepsilon}(x,T)|
\gamma_\theta(y)\bar\xi(x)\dx\dy
\\&
\leq C\theta\E \int_0^1\int_{\R^d}\int_{\R^d} 
|\nabla u_{\hat\varepsilon}^\leftarrow(x{-}ry,T)|
\gamma_\theta(y) \dx \dy \,\mathrm{d}r
\\&
\leq C(\kappa)\theta\E\int_{D} u_{\hat\varepsilon}^{m-1}|\nabla u_{\hat\varepsilon}| \dx
\leq C(\kappa)\hat\varepsilon^{-\beta}\theta.
\end{align*}
Since we may assume in this argument without loss of generality 
that $\varepsilon\leq\hat\varepsilon$ (otherwise, switch the roles of $v_{\hat\varepsilon}$
and $v_\varepsilon$ in the previous estimate) we obtain together with
an analogous estimate based on the regularity of the initial condition
\begin{align}
\nonumber
&\E\int_{D}|v_\varepsilon(x,T){-}v_{\hat\varepsilon}(x,T)|\bar\xi(x) \dy \dx
\\&\label{eq:intermediateSummary10}
\leq \int_{D}|v_\varepsilon(x,0){-}v_{\hat\varepsilon}(x,0)|\dx
+ C(\kappa)l^{-1}\theta^{-1}(\varepsilon\vee\hat\varepsilon)^q
+ C(\kappa)l^{-2}(\varepsilon\vee\hat\varepsilon)^\alpha
\\&~~~\nonumber
+ C(\kappa)(\varepsilon\vee\hat\varepsilon)^{-\beta}\theta
+ C(\kappa)l^{-3}\theta
+ \bar C\kappa
\end{align}
for all $T\in [\kappa,T^*]$, all $(\theta,l)$ subject to \eqref{eq:choiceScales} resp.\ \eqref{eq:condition2Scales}, 
and all $\varepsilon,\hat\varepsilon\leq\frac{\kappa}{2}$. As a consequence of \eqref{auxConvergence21}, we may finally
switch from $v_\varepsilon=\zeta^{\varepsilon^q}_\kappa\circ u_\varepsilon^\leftarrow$ to 
$\kappa\vee u_{\varepsilon}^\leftarrow$ which yields the bound
\begin{align}
\nonumber
&\E\int_{D}|\kappa\vee u_{\varepsilon}^\leftarrow(x,T){-}
\kappa\vee u_{\hat\varepsilon}^\leftarrow(x,T)|\bar\xi(x) \dy \dx
\\&\label{eq:intermediateSummary11}
\leq \int_{D}|\kappa\vee u_{\varepsilon}^\leftarrow(x,0)
{-}\kappa\vee u_{\hat\varepsilon}^\leftarrow(x,0)|\dx
+ C(\kappa)l^{-1}\theta^{-1}(\varepsilon\vee\hat\varepsilon)^q
\\&~~~\nonumber
+ C(\kappa)l^{-2}(\varepsilon\vee\hat\varepsilon)^\alpha
+ C(\kappa)(\varepsilon\vee\hat\varepsilon)^{-\beta}\theta
+ C(\kappa)l^{-3}\theta
+ \bar C\kappa
\end{align}
for all $T\in [\kappa,T^*]$, all $(\theta,l)$ subject to \eqref{eq:choiceScales} resp.\ \eqref{eq:condition2Scales}, 
and all $\varepsilon,\hat\varepsilon\leq\frac{\kappa}{2}$. We eventually arrived at the last step of the proof.

In light of the right hand side terms in \eqref{eq:intermediateSummary11} we 
first define $\theta:=(\varepsilon\vee\hat\varepsilon)^{\beta+1}$,
then fix $\bar\vartheta>0$ and $q>0$ such that $2\bar\vartheta<\alpha$, $3\bar\vartheta<\beta+1$
as well as $\beta+1+\bar\vartheta<q$, and finally define $l:=(\varepsilon\vee\hat\varepsilon)^{\bar\vartheta}$.
Choosing $\varepsilon_0>0$ small enough, we can ensure that $(\theta,l)$
satisfy \eqref{eq:choiceScales} resp.\ \eqref{eq:condition2Scales}. Hence, these choices
guarantee that we may infer from \eqref{eq:intermediateSummary11} a bound of the type \eqref{eq:L1contraction}
with right hand side terms $C(\kappa)(\varepsilon\vee\hat\varepsilon)^{2\vartheta}$ for some suitable $\vartheta>0$.
Choosing $\varepsilon_0$ even smaller, if needed, we can avoid the dependence of the constant on the data by
sacrificing a power $\vartheta$, which together with the fact that $\bar\xi=\chi_D$ on $K$ entails the 
desired bound \eqref{eq:L1contraction}. 
This concludes the proof of Proposition~\ref{prop:L1contraction}. \qed

\subsection{Proof of Corollary~\ref{cor:L1convergence} 
{\normalfont ($L^1$ convergence of shifted densities)}}
Let $\delta>0$ be fixed but arbitrary. By the triangle inequality we may estimate
\begin{align*}
&\sup_{T\in [\kappa,T^*]}\E\int_D|u^\leftarrow_{\varepsilon}(T){-}u^\leftarrow_{\hat\varepsilon}(T)|\dx
\\&
\leq \sup_{T\in [\kappa,T^*]}\E\int_D|u^\leftarrow_{\varepsilon}(T)|\chi_{\{u_\varepsilon(T)<\kappa\}}
+|u^\leftarrow_{\hat\varepsilon}(T)|\chi_{\{u_{\hat\varepsilon}(T)<\kappa\}}\dx
\\&~~~
+\sup_{T\in [\kappa,T^*]} \E\int_D |\kappa\vee u^\leftarrow_\varepsilon(x,T)
{-}\kappa\vee u^\leftarrow_{\hat\varepsilon}(x,T)|\dx
\end{align*}
for all $\kappa\in (0,T^*\wedge 1)$ and all $\varepsilon,\hat\varepsilon\leq\frac{\kappa}{2}$. Hence, it follows from
splitting $D=K\cup(D\setminus K)$ together with the bounds \eqref{boundMaximumPrinciple} and \eqref{eq:L1contraction} 
as well as the definition \eqref{eq:shift} of the shifted densities that
\begin{align*}
&\sup_{T\in [\kappa,T^*]}\E\int_D|u^\leftarrow_{\varepsilon}(T){-}u^\leftarrow_{\hat\varepsilon}(T)|\dx
\\&
\leq 2\mathcal{L}^d(D)\kappa
+ 2\mathcal{L}^d(D\setminus K)\big(\kappa\vee(1{+}\|u_0\|_{L^\infty(D)})\big)
+ \bar C(\varepsilon\vee\hat\varepsilon)^\vartheta + \bar C\kappa
\\&~~~
+\E\int_D |\kappa\vee(u_0(x){+}\varepsilon){-}\kappa\vee(u_0(x){+}\hat\varepsilon)|\dx.
\end{align*}
The term with the initial data is estimated similarly by
\begin{align*}
&\E\int_D |\kappa\vee(u_0(x){+}\varepsilon){-}\kappa\vee(u_0(x){+}\hat\varepsilon)|\dx
\leq 2\mathcal{L}^d(D)\kappa + \mathcal{L}^d(D)|\varepsilon{-}\hat\varepsilon|
\end{align*}
for all $\kappa>0$ and all $\varepsilon,\hat\varepsilon\leq\frac{\kappa}{2}$.
Choosing first $\kappa<\tau$ sufficiently small such that 
$(4\mathcal{L}^d(D){+}\bar C)\kappa\leq\frac{\delta}{2}$,
we may then fix a large enough compact set $K\subset D$ and some small enough $\varepsilon_0(\kappa,K)$ 
such that the bound
\begin{align*}
\sup_{T\in [\tau,T^*]}\E\int_D|u^\leftarrow_{\varepsilon}(T){-}u^\leftarrow_{\hat\varepsilon}(T)|\dx \leq \delta
\end{align*}
holds true for all $\varepsilon,\hat\varepsilon\leq\varepsilon_0$. This proves that
the sequence of shifted densities $u_\varepsilon^\leftarrow$ is a Cauchy sequence in the space 
$C([\tau,T^*];L^1(\Omega{\times}D,\Prob\otimes\mathcal{L}^d))$ for all $\tau>0$. The
corresponding assertion in the space $L^1([0,T^*];L^1(\Omega{\times}D,\Prob\otimes\mathcal{L}^d))$
is proved similarly based on the additional estimate
\begin{align*}
&\E\int_0^{T^*}\int_D|u^\leftarrow_{\varepsilon}{-}u^\leftarrow_{\hat\varepsilon}|\dx\dt
\\&
\leq 2\mathcal{L}^d(D)\kappa(1{+}\|u_0\|_{L^\infty(D)})
+(T^*{-}\kappa)\sup_{T\in [\kappa,T^*]}\E\int_D|u^\leftarrow_{\varepsilon}(T){-}u^\leftarrow_{\hat\varepsilon}(T)|\dx.
\end{align*}
Let us denote the corresponding limit in $L^1([0,T^*];L^1(\Omega{\times}D,\Prob\otimes\mathcal{L}^d))$ by $u$.

On the other side, it follows immediately from the bound \eqref{boundMaximumPrinciple}
that the sequence of densities $u_\varepsilon$ has a weak limit in the
space $L^{m+1}(\Omega_{T^*},\mathcal{P}_{T^*};L^{m+1}(D))$, which we denote by $\bar u$. 
It remains to verify that $u=\bar u$. To this end, let $\phi\in C^\infty_{\mathrm{cpt}}(D\times [0,T^*])$
and $A\in\mathcal{F}_{T^*}$ be fixed. We then have
\begin{align*}
\E\chi_A\int_0^{T^*}\int_D (u{-}\bar u)\phi \dx\dt
=\lim_{\varepsilon\to 0}\,\E\chi_A\int_0^{T^*}\int_D (u_\varepsilon^\leftarrow{-}u_\varepsilon)\phi \dx\dt.
\end{align*}
By a simple change of variables and the definition \eqref{eq:shift} we may write
\begin{align*}
&\E\int_0^{T^*}\int_{D} (u_\varepsilon^\leftarrow{-}u_\varepsilon)\phi \dx\dt
\\&
=\E\int_0^{T^*}\int_{\R^d} u_\varepsilon(x,t)\big(\phi(x{-}\nu(B_t{-}B_t^\varepsilon),t){-}\phi(x,t)\big) \dx\dt.
\end{align*}
Exploiting \eqref{eq:approxBrownianMotion}, \eqref{eq:HoelderContBM} and \eqref{boundMaximumPrinciple}
we further estimate
\begin{align*}
&\bigg|\E\int_0^{T^*}\int_{\R^d} u_\varepsilon(x,t)\big(\phi(x{-}\nu(B_t{-}B_t^\varepsilon),t){-}\phi(x,t)\big) \dx\dt\bigg|
\\&
\leq C\|\nabla\phi\|_{L^\infty(D\times [0,T^*])}\varepsilon^\alpha\E C_\alpha
\leq C\varepsilon^\alpha.
\end{align*}
Hence, we may infer that
\begin{align*}
\E\chi_A\int_0^{T^*}\int_D (u{-}\bar u)\phi \dx\dt = 0
\end{align*}
holds true for all $\phi\in C^\infty_{\mathrm{cpt}}(D\times [0,T^*])$
and $A\in\mathcal{F}_{T^*}$. This shows that $u=\bar u$ and thus concludes the proof
of Corollary~\ref{cor:L1convergence}.
\qed

\subsection{Proof of Proposition~\ref{prop:weakSolutionByWongZakaiApproximation} 
{\normalfont (Recovering the unique weak solution)}}
Let a test function $\phi\in C^\infty_{\mathrm{cpt}}(D)$ be fixed,
and define $K:=\mathrm{supp}\,\phi$. Fix also an integer $M\geq 1$, and
let $\mathcal{C}_\alpha$ be the square integrable random variable of \eqref{eq:HoelderContBM}.
Let $\delta\in (0,1)$ be such that $\{x\in\R^d\colon\mathrm{dist}(x,K)\leq \delta\}\subset D$.
Let $\varepsilon'=\varepsilon'(M,\delta)$ be the constant from \eqref{eq:supportAfterShift}.
By It\^{o}'s formula, the definition \eqref{eq:shift} and the fact that the 
$u_\varepsilon$ solve \eqref{eq:densityPME2} classically we have for all 
$\varepsilon\leq\varepsilon'$ and all measurable $A\in\mathcal{F}_{T^*}$
(cf.\ the argument in the first step of the proof of Proposition~\ref{prop:L1contraction})
\begin{equation}
\begin{aligned}
\label{eq:weakFormShift2a}
&\E \chi_{\{\mathcal{C}_\alpha\leq M\}} \chi_A \int_D u_\varepsilon^\leftarrow(x,T)\phi(x) \dx 
- \E \chi_{\{\mathcal{C}_\alpha\leq M\}} \chi_A \int_D  (u_0(x){+}\varepsilon)\phi(x)\dx
\\&
=\E \chi_{\{\mathcal{C}_\alpha\leq M\}} \chi_A 
\int_0^{T}\int_D \Big(\Delta (u_{\varepsilon}^\leftarrow)^m(x,t) 
{+} \frac{1}{2}\nu^2\Delta u_{\varepsilon}^\leftarrow(x,t)\Big)\phi(x) \dx \dt
\\&~~~
+ \E \chi_{\{\mathcal{C}_\alpha\leq M\}} \chi_A
\int_0^{T}\int_D \nu\nabla u_{\varepsilon}^\leftarrow(x,t)\phi(x) \dx \,\mathrm{d}B_t
\end{aligned}
\end{equation}
for all $T\in (0,T^*)$. Define the shifted test function
\begin{align*}
\phi^\leftarrow_\varepsilon(x,t) := \phi(x{-}\nu(B_t{-}B_t^\varepsilon)),
\end{align*} 
so that we obtain by a simple change of variables
\begin{equation}
\begin{aligned}
\label{eq:weakFormShift2b}
&\E \chi_{\{\mathcal{C}_\alpha\leq M\}} \chi_A \int_D u_\varepsilon^\leftarrow(x,T)\phi(x) \dx 
- \E \chi_{\{\mathcal{C}_\alpha\leq M\}} \chi_A \int_D  (u_0(x){+}\varepsilon)\phi(x)\dx
\\&
=\E \chi_{\{\mathcal{C}_\alpha\leq M\}} \chi_A 
\int_0^{T}\int_D \Big(\Delta u_{\varepsilon}^m(x,t) 
{+} \frac{1}{2}\nu^2\Delta u_{\varepsilon}(x,t)\Big)\phi^\leftarrow_\varepsilon(x,t) \dx \dt
\\&~~~
+ \E \chi_{\{\mathcal{C}_\alpha\leq M\}} \chi_A
\int_0^{T}\int_D \nu\nabla u_{\varepsilon}(x,t)\phi^\leftarrow_\varepsilon(x,t) \dx \,\mathrm{d}B_t.
\end{aligned}
\end{equation}
Note that as a consequence of \eqref{eq:supportAfterShift} we have 
almost surely on $\{\mathcal{C}_\alpha\leq M\}$ for all $\varepsilon\leq\varepsilon'$
and all $t\in [0,T^*]$ that $\mathrm{supp}\,\phi^\leftarrow_\varepsilon(\cdot,t)
\subset\subset D$. Hence, integrating by parts on the right hand side in \eqref{eq:weakFormShift2b}
does not produce any boundary integrals so that after reversing the change of variables we obtain the identity
\begin{equation}
\begin{aligned}
\label{eq:weakFormShift2c}
&\E \chi_{\{\mathcal{C}_\alpha\leq M\}} \chi_A \int_D u_\varepsilon^\leftarrow(x,T)\phi(x) \dx 
- \E \chi_{\{\mathcal{C}_\alpha\leq M\}} \chi_A \int_D  (u_0(x){+}\varepsilon)\phi(x)\dx
\\&
=\E \chi_{\{\mathcal{C}_\alpha\leq M\}} \chi_A 
\int_0^{T}\int_D \Big((u_{\varepsilon}^\leftarrow)^m(x,t) 
{+} \frac{1}{2}\nu^2 u_{\varepsilon}^\leftarrow(x,t)\Big)
\Delta\phi(x) \dx \dt
\\&~~~
- \E \chi_{\{\mathcal{C}_\alpha\leq M\}} \chi_A
\int_0^{T}\int_D \nu u_{\varepsilon}^\leftarrow(x,t)\nabla\phi(x) \dx \,\mathrm{d}B_t.
\end{aligned}
\end{equation}
We aim to pass to the limit $\varepsilon_0\geq\varepsilon\to 0$ in all four terms.
This is possible by the convergence of the shifted densities $u_\varepsilon^\leftarrow$
in $C([\tau,T^*];L^1(\Omega{\times}D,\Prob{\otimes}\mathcal{L}^d))$ as well as
in $L^1([0,T^*];L^1(\Omega{\times}D,\Prob{\otimes}\mathcal{L}^d))$, see Corollary~\ref{cor:L1convergence}.
Because of the uniform bound \eqref{boundMaximumPrinciple} the convergence in $L^1$
can actually be lifted to convergence in any $L^q([0,T^*];L^q(\Omega{\times}D,\Prob{\otimes}\mathcal{L}^d)),\,q\in (1,\infty),$
which makes the limit passage possible in the non-linear diffusion term as well as the noise term (using for the latter, e.g.,
the Burkholder--Davis--Gundy inequality). In summary, we obtain from letting $\varepsilon_0\geq\varepsilon\to 0$ 
the identity
\begin{align*}
&\E \chi_{\{\mathcal{C}_\alpha\leq M\}} \chi_A \int_D u(x,T)\phi(x) \dx 
- \E \chi_{\{\mathcal{C}_\alpha\leq M\}} \chi_A \int_D  u_0(x)\phi(x)\dx
\\&
=\E \chi_{\{\mathcal{C}_\alpha\leq M\}} \chi_A 
\int_0^{T}\int_D \Big(u^m(x,t) {+} \frac{1}{2}\nu^2u(x,t)\Big)\Delta\phi(x) \dx \dt
\\&~~~
- \E \chi_{\{\mathcal{C}_\alpha\leq M\}} \chi_A
\int_0^{T}\int_D \nu u(x,t) \nabla\phi(x) \dx \,\mathrm{d}B_t
\end{align*}
for all $T\in (0,T^*)$. Since $M\geq 1$ as well as 
$A\in\mathcal{F}_{T^*}$ were arbitrary, and the random variable $\mathcal{C}_\alpha$ is integrable, 
we thus recover \eqref{eq:weakSol}. This concludes the proof of 
Proposition~\ref{prop:weakSolutionByWongZakaiApproximation} since the asserted bounds for $u$
follow immediately from \eqref{boundMaximumPrinciple}. \qed

\section{Finite time extinction property}
\label{sec:finiteTimeExtinction}

\subsection{Viscosity theory: Maximal subsolution in a rough domain}
\label{sec:maximalSubsolution}
From now on we will restrict ourselves to the one-dimensional setting $d=1$.
To start we recall some language from viscosity theory (cf.\ ~\cite{Crandall:1992}). In particular, 
we aim to make precise what we mean by a subsolution to the Cauchy--Dirichlet problem 
\eqref{eq:pressPMErough}--\eqref{eq:pressPMElateral} of the porous medium equation
with two copies of a Brownian path as lateral boundary; and analogously for the corresponding
regularized problem \eqref{eq:pressPMErough2}--\eqref{eq:pressPMElateral2}. 

In a first step, we introduce the relevant notions for the full space problem. 
The porous medium operator in terms of the pressure variable is encoded by the functional
\begin{align}
\label{eq:functionalPorMed}
F\colon [0,\infty)\times\R\times\R \to \R,
\quad (r,q,X) \mapsto (m{-}1)rX+|q|^2.
\end{align}
Let $T^*\in (0,\infty]$ be a time horizon and $p\colon \R\times (0,T^*)\to \R$
be a function. The parabolic semijet $\Jplus p(x,t)$ of $p$ at $(x,t)\in \R\times (0,T^*)$ 
is the set of all $(q,a,X)\in\R^3$ such that
\begin{equation}
\begin{aligned}
\label{eq:parSemijet}
p(y,s) &\leq p(x,t) + a(s-t) + q(y-x) + \frac{1}{2}X(y-x)^2 
\\&~~~ 
+ o(|s{-}t|+|y{-}x|^2)\quad\text{as}\quad
\R\times(0,t] \ni (y,s) \to (x,t).
\end{aligned}
\end{equation}
Note that the condition in \eqref{eq:parSemijet} is a slightly weaker test
than the one of~\cite[(8.1)]{Crandall:1992} since we are only allowing
for $s\leq t$. This minor technicality turns out to be convenient
proving that subsolutions to, say, \eqref{eq:pressPMErough}--\eqref{eq:pressPMElateral} 
in the sense of Definition~\ref{def:subsolutionCauchyDirichlet} below are also
subsolutions to \eqref{eq:fullSpacePorMed}--\eqref{eq:fullSpaceInitialCond}.
Moreover, \eqref{eq:parSemijet} is in accordance with the definition of
subsolutions in~\cite[Definition 1]{Caffarelli:1999}. 

\begin{definition}
\label{def:subsolutionFullSpace}
Let $T^*\in (0,\infty]$ be a time horizon and
$p_0\in C^\infty_{\mathrm{cpt}}(\R;[0,\infty))$ be an initial pressure. An upper-semicontinuous
function $p\colon \R\times [0,T^*) \to [0,\infty)$ is called \emph{a subsolution for the
Cauchy problem of the porous medium equation with initial pressure $p_0$}
\begin{align}
\label{eq:fullSpacePorMed}
\partial_tp &= (m{-}1)p\partial_{xx}p + |\partial_x p|^2,
\quad (x,t)\in \R\times (0,T^*),
\\\label{eq:fullSpaceInitialCond}
p(x,0) &= p_0(x), \quad\hspace*{2.98cm} x\in\R,
\end{align}
if it holds
\begin{align}
\label{eq:subsolutionFullSpace}
a-F(p(x,t),q,X) \leq 0 
\end{align}
for all $(x,t)\in \R\times (0,T^*)$ and all $(q,a,X)\in \Jplus p(x,t)$, as well as 
if it holds $p(x,0) \leq p_0(x)$ for all $x\in\R$.
\end{definition}

In the language of~\cite{Crandall:1992}, the functional $a-F(r,p,X)$ encoding
the porous medium equation in terms of the pressure variable is not proper.
But note that it is at least degenerate elliptic. Since we will only deal with subsolutions
(or for the regularized problem with classical solutions), this is of no concern for us 
(cf.\ ~\cite{Caffarelli:1999} or~\cite{Vazquez:2005} for a viscosity theory of
the Cauchy problem of the deterministic porous medium equation).

We continue with the notion of subsolutions to \eqref{eq:pressPMErough}--\eqref{eq:pressPMElateral}
(resp.\ \eqref{eq:pressPMErough2}--\eqref{eq:pressPMElateral2}). Since we have to
incorporate Brownian paths as the lateral boundary, the following constructions are of 
course random. However, all results of this section turn out to be purely deterministic 
consequences of the probabilistic facts \eqref{eq:uniformBoundDyadicApproximation},
\eqref{eq:uniformConvergenceDyadicApproxiamtion}, \eqref{eq:HoelderContBM} and \eqref{eq:supportAfterShift}. 
In other words, we proceed with constructions to be understood in a pathwise sense.

The functional for the lateral boundary condition of the limit problem is simply defined by
$C\colon \R \to \R,\,r\mapsto r$. The one for the regularized problem is given by
$C_\varepsilon\colon \R\to\R,\, r\mapsto r-\frac{m}{m-1}\varepsilon^{m-1}$. We then introduce
a lower-semicontinuous functional 
\begin{align}
\label{eq:functionalCauchyDirichletLimit}
&G^-\colon \R\times (0,T^*)\times [0,\infty) \times \R^3 \to \R
\\\nonumber
&(x,t,r,q,a,X) \mapsto
\begin{cases}
a - F(r,q,X), & (x,t)\in \bigcup_{t\in (0,T^*)} (\nu B_t{+}I)\times \{t\}, \\
(a - F(r,q,X))\wedge C(r), & (x,t) \in \bigcup_{t\in (0,T^*)} (\nu B_t{+}\partial I)\times \{t\}, \\
C(r), & \text{else},
\end{cases}
\end{align}
encoding the equations \eqref{eq:pressPMErough} and \eqref{eq:pressPMElateral}. The corresponding
functional for the regularized problem, i.e., encoding \eqref{eq:pressPMErough2} and \eqref{eq:pressPMElateral2},
is given by
\begin{align}
\label{eq:functionalCauchyDirichletApprox}
&G^-_\varepsilon\colon \R\times (0,T^*)\times [0,\infty) \times \R^3 \to \R
\\\nonumber
&(x,t,r,q,a,X) \mapsto
\begin{cases}
a - F(r,q,X), 
& (x,t) \in \bigcup_{t\in (0,T^*)} (\nu B^\varepsilon_t{+}I)\times \{t\}, \\
C_\varepsilon(r), & \text{else}.
\end{cases}
\end{align}

\begin{definition}
\label{def:subsolutionCauchyDirichlet}
Let $T^*\in (0,\infty]$ be a time horizon, $I\subset\R$ be a bounded interval and
$\bar p_0\in C^\infty_{\mathrm{cpt}}(I;[0,\infty))$ be an initial pressure. An upper-semicontinuous
function $\bar p\colon \R\times [0,T^*) \to [0,\infty)$ is called a \emph{subsolution for the
Cauchy--Dirichlet problem} \eqref{eq:pressPMErough}--\eqref{eq:pressPMElateral} 
\emph{with initial pressure $\bar p_0$} if
\begin{align}
\label{eq:subsolutionCauchyDirichletLimit}
G^-(x,t,\bar p(x,t),q,a,X) \leq 0
\end{align}
holds true for all $(x,t)\in \R\times (0,T^*)$ and all $(q,a,X)\in \Jplus \bar p(x,t)$, as well as if
$\bar p(x,0) \leq \bar p_0(x)$ is satisfied for all $x\in \R$.

Analogously, we call an upper-semicontinuous function $\bar p_\varepsilon\colon \R\times [0,T^*) \to [0,\infty)$
a \emph{subsolution for the Cauchy--Dirichlet problem} \eqref{eq:pressPMErough2}--\eqref{eq:pressPMElateral2} if
\begin{align}
\label{eq:subsolutionCauchyDirichletApprox}
G^-_\varepsilon(x,t,\bar p_\varepsilon(x,t),q,a,X) \leq 0
\end{align}
holds true for all $(x,t)\in \R\times (0,T^*)$ and all $(q,a,X)\in \Jplus \bar p_\varepsilon(x,t)$, as well as if
$\bar p_\varepsilon(x,0) \leq \bar p_{0,\varepsilon}(x)$ is satisfied for all $x\in \R$.
\end{definition}

The reasons for relaxing \eqref{eq:pressPMErough}--\eqref{eq:pressPMElateral}
(resp.\ \eqref{eq:pressPMErough2}--\eqref{eq:pressPMElateral2}) to the full space
setting are twofold. On one hand, working in this framework turns out to be convenient 
when studying (any sort of) convergence of the subsolutions $\bar p_\varepsilon$
for the regularized problem. On the other hand, we also want to exploit
the established results from the viscosity theory~\cite{Caffarelli:1999}
(see also~\cite{Vazquez:2005}) for the Dirichlet problem of the deterministic porous medium equation
\eqref{eq:fullSpacePorMed}--\eqref{eq:fullSpaceInitialCond}.

We proceed with a list of intermediate results needed to prove the main result
of this work, Theorem~\ref{theorem:mainResult}. The corresponding proofs will be
provided afterwards. The first result concerns the construction of a subsolution 
to the regularized Cauchy--Dirichlet problem \eqref{eq:pressPMErough2}--\eqref{eq:pressPMElateral2}.

\begin{lemma}
\label{lem:subsolutionPressureApprox}
Given $\varepsilon > 0$, let $u_\varepsilon$ denote the unique weak solution to 
\emph{\eqref{eq:densityPME2}--\eqref{eq:densityPMElateral2}} with initial density 
$u_0\in C^\infty_{\mathrm{cpt}}(I;[0,\infty))$ in the sense of Lemma~\ref{lem:classicalSolution}. 
Define an associated pressure function as follows:
\begin{align}
\label{eq:definitionPressureApprox}
\bar p_\varepsilon\colon \R\times[0,T^*) &\to [0,\infty)
\\\nonumber
(x,t) &\mapsto	\begin{cases}
								\frac{m}{m-1}u_\varepsilon(x{-}\nu B^\varepsilon_t,t)^{m-1}, &
								(x,t) \in \bigcup_{t\in [0,T^*)} (\nu B^\varepsilon_t{+}I)\times \{t\}, \\
								\frac{m}{m-1}\varepsilon^{m-1}, & \text{else}.
								\end{cases} 
\end{align}
On a set with probability one the following then holds true: 

For each $\varepsilon>0$, the 
associated pressure $\bar p_\varepsilon$ is continuous and a subsolution of the problem
\emph{\eqref{eq:pressPMErough2}--\eqref{eq:pressPMElateral2}}
with initial pressure $\bar p_{0,\varepsilon}(x) := \frac{m}{m-1}(u_0(x){+}\varepsilon)^{m-1}$ 
in the sense of Definition~\ref{def:subsolutionCauchyDirichlet}. 
Moreover, we have the bounds
\begin{align}
\label{eq:boundPressureMaxPrinciple}
\frac{m}{m-1}\varepsilon^{m-1} \leq \bar p_\varepsilon(x,t) 
\leq \frac{m}{m-1}(\varepsilon{+}\|u_0\|_{L^\infty(I)})^{m-1} 
\end{align}
for all $(x,t)\in\R\times [0,T^*)$. Finally, $\bar p_\varepsilon$ is also a 
subsolution of the Cauchy problem \emph{\eqref{eq:fullSpacePorMed}--\eqref{eq:fullSpaceInitialCond}} 
with initial pressure $p_{0,\varepsilon}(x):=\frac{m}{m-1}(u_0(x){+}\varepsilon)^{m-1}$
in the sense of Definition~\ref{def:subsolutionFullSpace}.
\end{lemma}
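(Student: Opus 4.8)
The plan is to argue pathwise on the probability-one event of Lemma~\ref{lem:classicalSolution} — so that $B^\varepsilon\in C^\infty([0,\infty))$ and $u_\varepsilon$ is the classical solution of \eqref{eq:densityPME2}--\eqref{eq:densityPMElateral2} — and to check the four assertions essentially by inspection. I would dispose of the bounds \eqref{eq:boundPressureMaxPrinciple} first: the density-to-pressure map $g(r):=\frac{m}{m-1}r^{m-1}$ is nondecreasing on $[0,\infty)$, so composing the maximum-principle estimate \eqref{boundMaximumPrinciple} with $g$ yields the claimed range on the moving domain $\bigcup_{t}(\nu B^\varepsilon_t{+}I)\times\{t\}$, while off it $\bar p_\varepsilon\equiv\frac{m}{m-1}\varepsilon^{m-1}$ trivially lies in the same range. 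The initial condition is equally quick: since $\rho$ is supported in $(0,1)$ one has $B^\varepsilon_0=0$, so the moving domain at $t=0$ is exactly $I$, and using \eqref{eq:densityPMEic2} together with $\supp u_0\subset I$ one checks $\bar p_\varepsilon(\cdot,0)=g(u_0{+}\varepsilon)=\bar p_{0,\varepsilon}=p_{0,\varepsilon}$ on all of $\R$, which is even the equality in the required inequalities.

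Next I would handle continuity. The set $\mathcal{O}_\varepsilon:=\bigcup_{t\in[0,T^*)}(\nu B^\varepsilon_t{+}I)\times\{t\}$ is open in $\R\times[0,T^*)$, being the preimage of the open interval $I$ under the continuous map $(x,t)\mapsto x{-}\nu B^\varepsilon_t$; hence its complement is closed, and there $\bar p_\varepsilon$ is the constant $\frac{m}{m-1}\varepsilon^{m-1}$. On the closed set $\bigcup_{t}(\nu B^\varepsilon_t{+}\bar I)\times\{t\}$ (closed because $t\mapsto\nu B^\varepsilon_t$ is continuous and $\bar I$ compact) the function $\bar p_\varepsilon$ coincides with $(x,t)\mapsto g\big(u_\varepsilon(x{-}\nu B^\varepsilon_t,t)\big)$, a composition of continuous maps by Lemma~\ref{lem:classicalSolution} and smoothness of $B^\varepsilon$. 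The two prescriptions agree on the overlap $\bigcup_{t}(\nu B^\varepsilon_t{+}\partial I)\times\{t\}$ since $u_\varepsilon\equiv\varepsilon$ on $\partial I$ by \eqref{eq:densityPMElateral2}, so the gluing lemma for continuous functions on closed sets shows $\bar p_\varepsilon$ is continuous, in particular upper-semicontinuous.

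The core of the argument is the subsolution property, which I would establish by distinguishing points of $\mathcal{O}_\varepsilon$ from points off $\mathcal{O}_\varepsilon$. On $\mathcal{O}_\varepsilon$ the function $\bar p_\varepsilon$ is $C^\infty$: the shifted density $v_\varepsilon(x,t):=u_\varepsilon(x{-}\nu B^\varepsilon_t,t)$ is smooth there, a chain-rule computation using that $u_\varepsilon$ solves $\partial_tu_\varepsilon=\partial_{xx}u_\varepsilon^m+\nu\dot{B}^\varepsilon_t\partial_xu_\varepsilon$ classically shows the two transport terms cancel and $\partial_tv_\varepsilon=\partial_{xx}v_\varepsilon^m$, and since $v_\varepsilon\geq\varepsilon>0$ the standard density-to-pressure algebra gives $\partial_t\bar p_\varepsilon=(m{-}1)\bar p_\varepsilon\partial_{xx}\bar p_\varepsilon+|\partial_x\bar p_\varepsilon|^2=F(\bar p_\varepsilon,\partial_x\bar p_\varepsilon,\partial_{xx}\bar p_\varepsilon)$ classically. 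For $(q,a,X)\in\Jplus\bar p_\varepsilon(x,t)$, Taylor-expanding the smooth $\bar p_\varepsilon$ against \eqref{eq:parSemijet} forces $q=\partial_x\bar p_\varepsilon$, $X\geq\partial_{xx}\bar p_\varepsilon$ and $a\leq\partial_t\bar p_\varepsilon$ (using that \eqref{eq:parSemijet} only tests $s\leq t$), so that $a-F(\bar p_\varepsilon,q,X)\leq\partial_t\bar p_\varepsilon-(m{-}1)\bar p_\varepsilon\partial_{xx}\bar p_\varepsilon-|\partial_x\bar p_\varepsilon|^2=0$ since $\bar p_\varepsilon\geq0$. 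At a point $(x,t)\notin\mathcal{O}_\varepsilon$ one has $\bar p_\varepsilon(x,t)=\frac{m}{m-1}\varepsilon^{m-1}$ (on the exterior by definition, on the lateral boundary because $u_\varepsilon\equiv\varepsilon$), and since $\bar p_\varepsilon\geq\frac{m}{m-1}\varepsilon^{m-1}$ everywhere by \eqref{boundMaximumPrinciple}, $(x,t)$ is a global minimum point of $\bar p_\varepsilon$; testing \eqref{eq:parSemijet} against this constant minorant then forces $q=0$, $X\geq0$ and $a\leq0$ for any $(q,a,X)\in\Jplus\bar p_\varepsilon(x,t)$, whence $a-F(\bar p_\varepsilon(x,t),q,X)=a-m\varepsilon^{m-1}X\leq0$, while also $G^-_\varepsilon(x,t,\bar p_\varepsilon(x,t),q,a,X)=C_\varepsilon(\bar p_\varepsilon(x,t))=0$. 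Putting the two cases together gives at once the subsolution property for \eqref{eq:pressPMErough2}--\eqref{eq:pressPMElateral2} in the sense of Definition~\ref{def:subsolutionCauchyDirichlet} and for the Cauchy problem \eqref{eq:fullSpacePorMed}--\eqref{eq:fullSpaceInitialCond} in the sense of Definition~\ref{def:subsolutionFullSpace}.

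I expect the only delicate point to be the moving lateral boundary $\bigcup_{t}(\nu B^\varepsilon_t{+}\partial I)\times\{t\}$: there the graph of $\bar p_\varepsilon$ generically has a corner, the interior pressure meeting the constant value $\frac{m}{m-1}\varepsilon^{m-1}$ with a nonzero inward slope dictated by the Hopf-type sign $\n_{\partial I}\!\cdot\!\nabla u_\varepsilon\leq0$, so there is no access to classical second derivatives. The observation that resolves this is that $\bar p_\varepsilon$ attains its global minimum on the entire complement of $\mathcal{O}_\varepsilon$, which constrains the admissible semijets there enough to close the inequality; this is also consistent with, and makes use of, the one-sided-in-time form of the parabolic semijet \eqref{eq:parSemijet}. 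The remaining ingredients — the chain-rule identity for $v_\varepsilon$, the density-to-pressure computation, and the gluing of continuous functions — are routine.
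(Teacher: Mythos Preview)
Your proof is correct and follows essentially the same approach as the paper's: continuity via gluing at the lateral boundary where $u_\varepsilon\equiv\varepsilon$, the bounds by monotonicity of $g$ and \eqref{boundMaximumPrinciple}, the interior subsolution property from the classical pressure equation, and the off-interior case from the fact that $\bar p_\varepsilon$ attains its global minimum there. The only organizational difference is that the paper treats the Cauchy--Dirichlet subsolution property off $\mathcal{O}_\varepsilon$ as trivial (since $G^-_\varepsilon=C_\varepsilon$ there and $C_\varepsilon(\frac{m}{m-1}\varepsilon^{m-1})=0$) and then separately runs the semijet argument only at lateral boundary points for the full-space subsolution, whereas you unify both via the global-minimum observation; your packaging is slightly cleaner but the content is the same.
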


In a next step, we construct on a set of probability one the maximal subsolution 
(also referred to as Perron's solution) $\bar p_{\max}$ of the limit Cauchy--Dirichlet
problem \eqref{eq:pressPMErough}--\eqref{eq:pressPMElateral} in the sense of 
Definition~\ref{def:subsolutionCauchyDirichlet}. Using standard arguments from
viscosity theory, the main issue is to establish the existence of \textit{a}
subsolution to \eqref{eq:pressPMErough}--\eqref{eq:pressPMElateral}. This
will be done by means of the technique of semi-relaxed limits.

\begin{proposition}
\label{prop:maxSubsolution}
For each $\varepsilon > 0$ let $\bar p_\varepsilon$ denote the subsolution
of \emph{\eqref{eq:pressPMErough2}--\eqref{eq:pressPMElateral2}} as constructed
in Lemma~\ref{lem:subsolutionPressureApprox}. Define the upper semi-relaxed limit
(with respect to backwards parabolic cylinders)
\begin{equation}
\begin{aligned}
\label{def:semirelaxedLimitPressure}
&\bar p_{\mathrm{semi-rel}}(x,t)
\\&
:=\lim_{\varepsilon\to 0}\sup_{\hat\varepsilon\leq\varepsilon}
\big\{\bar p_{\hat\varepsilon}(y,s)\colon (y,s)\in\R{\times}[0,T^*),\,
(y,s)\in (x{-}\varepsilon,x{+}\varepsilon){\times}(t{-}\varepsilon^2,t]\big\}
\end{aligned}
\end{equation}
for all $(x,t)\in\R\times [0,T^*)$. Then the following holds true almost surely: 

The upper-semicontinuous envelope of $\bar p_{\mathrm{semi-rel}}$ is a subsolution
of \emph{\eqref{eq:pressPMErough}--\eqref{eq:pressPMElateral}}
with initial pressure $\bar p_0(x) := \frac{m}{m-1}u_0(x)^{m-1}$ in the sense of 
Definition~\ref{def:subsolutionCauchyDirichlet}. Define 
\begin{align}
\label{def:maximalSubsol}
\bar p_{\max}(x,t):=\sup\{\bar p(x,t)\colon \bar p \text{ is a subsol.\ of }
\emph{\eqref{eq:pressPMErough}--\eqref{eq:pressPMElateral}},(x,t)\in\R{\times} [0,T^*)\}.
\end{align}
Then $\bar p_{\max}(x,t)<\infty$ for all $(x,t)\in\R\times [0,T^*)$, and
$\bar p_{\max}$ is a subsolution of \emph{\eqref{eq:pressPMErough}--\eqref{eq:pressPMElateral}}
with initial pressure $\bar p_0(x) := \frac{m}{m-1}u_0(x)^{m-1}$ in the sense of 
Definition~\ref{def:subsolutionCauchyDirichlet}. 
Denoting by $p_{\mathrm{visc}}\in C(\R\times [0,T^*))$ the viscosity
solution of \eqref{eq:fullSpacePorMed}--\eqref{eq:fullSpaceInitialCond}
with initial pressure $p_0(x):=\frac{m}{m-1}u_0(x)^{m-1}$ in the sense 
of~\cite[Definition 4]{Caffarelli:1999} we have
\begin{align}
\label{eq:upperBoundPerronSolution}
\bar p_{\max}(x,t) \leq p_{\mathrm{visc}}(x,t)
\end{align}
for all $(x,t)\in \R\times [0,T^*)$.
\end{proposition}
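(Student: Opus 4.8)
The plan is to proceed in four stages, following the standard Perron-method blueprint from viscosity theory~\cite{Crandall:1992}, but with the nonstandard ingredient being the passage through semi-relaxed limits to produce the first subsolution.

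First I would verify that $\bar p_{\mathrm{semi-rel}}$ is finite and that its upper-semicontinuous envelope (call it $\bar p^*_{\mathrm{semi-rel}}$) is indeed a subsolution of \eqref{eq:pressPMErough}--\eqref{eq:pressPMElateral}. Finiteness is immediate from the uniform bound \eqref{eq:boundPressureMaxPrinciple} in Lemma~\ref{lem:subsolutionPressureApprox}, which is uniform in $\varepsilon$ on $\{\varepsilon \le 1\}$ say, so $\bar p_{\mathrm{semi-rel}} \le \frac{m}{m-1}(1+\|u_0\|_{L^\infty(I)})^{m-1}$ everywhere. For the subsolution property, I would invoke the standard stability theorem for semi-relaxed limits (cf.~\cite[Section~6, Lemma~6.1]{Crandall:1992}): if $(q,a,X) \in \Jplus \bar p^*_{\mathrm{semi-rel}}(x,t)$, then one can find a sequence $(x_k,t_k) \to (x,t)$ with $\hat\varepsilon_k \to 0$ and $(q_k,a_k,X_k) \in \Jplus \bar p_{\hat\varepsilon_k}(x_k,t_k)$ such that $\bar p_{\hat\varepsilon_k}(x_k,t_k) \to \bar p^*_{\mathrm{semi-rel}}(x,t)$ and $(q_k,a_k,X_k)\to(q,a,X)$. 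Since each $\bar p_{\hat\varepsilon_k}$ satisfies $G^-_{\hat\varepsilon_k}(x_k,t_k,\cdot,\cdot,\cdot,\cdot)\le 0$, I would then take the limit. The subtlety here is the geometry of the domains: I must check that if $(x,t)$ is interior to $\bigcup_t(\nu B_t + I)\times\{t\}$, then for $k$ large $(x_k,t_k)$ is interior to $\bigcup_t(\nu B^{\hat\varepsilon_k}_t + I)\times\{t\}$; this follows from the uniform convergence \eqref{eq:uniformConvergenceDyadicApproxiamtion} (or \eqref{eq:supportAfterShift}) together with the openness of $I$. On that event, $G^-_{\hat\varepsilon_k} = a_k - F(\cdot)$ and in the limit $a - F(\bar p^*_{\mathrm{semi-rel}}(x,t),q,X)\le 0$. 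If $(x,t)$ lies on the lateral boundary or outside, then $G^-$ requires $C(r) = r \le 0$, i.e.\ $\bar p^*_{\mathrm{semi-rel}}(x,t) \le 0$; but $\bar p_{\hat\varepsilon_k} \le \frac{m}{m-1}\hat\varepsilon_k^{m-1}$ on the complement of $\bigcup_t(\nu B^{\hat\varepsilon_k}_t+I)\times\{t\}$ by \eqref{eq:boundPressureMaxPrinciple}, and points near the lateral boundary get caught in the complement of a slightly shrunk domain, so the limit value is $0$. The initial condition $\bar p^*_{\mathrm{semi-rel}}(x,0) \le \bar p_0(x)$ follows from $\bar p_{0,\varepsilon}(x) = \frac{m}{m-1}(u_0(x)+\varepsilon)^{m-1} \to \frac{m}{m-1}u_0(x)^{m-1} = \bar p_0(x)$ as $\varepsilon \to 0$, uniformly in $x$ (smoothness of $u_0$), using also the backward-in-time nature of the cylinders in \eqref{def:semirelaxedLimitPressure}.

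Second, having exhibited one subsolution, the set in \eqref{def:maximalSubsol} is nonempty, and I would show $\bar p_{\max}(x,t) < \infty$: any subsolution $\bar p$ is dominated by the viscosity solution $p_{\mathrm{visc}}$ of the full-space problem \eqref{eq:fullSpacePorMed}--\eqref{eq:fullSpaceInitialCond} with the same initial data. Indeed, by the remark after \eqref{eq:parSemijet} and the structure of $G^-$, any subsolution of the Cauchy--Dirichlet problem in the sense of Definition~\ref{def:subsolutionCauchyDirichlet} is also a subsolution of the full-space Cauchy problem in the sense of Definition~\ref{def:subsolutionFullSpace}: at interior points $G^-$ gives exactly $a - F \le 0$; at lateral-boundary or exterior points it gives $r \le 0$, but combined with upper-semicontinuity and $\bar p \ge 0$ this forces $\bar p \equiv 0$ there, and at such points any $(q,a,X)\in\Jplus\bar p(x,t)$ has $a \le 0$ and $q = 0$ (since $\bar p$ has a local max in $x$ and is nonincreasing in backward time there), hence $a - F(0,0,X) = a \le 0$ trivially. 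By the comparison principle for viscosity solutions of the deterministic PME~\cite[Theorem~2, Definition~4]{Caffarelli:1999} (see also~\cite{Vazquez:2005}), any such full-space subsolution with $\bar p(\cdot,0)\le p_0$ satisfies $\bar p \le p_{\mathrm{visc}}$ on $\R\times[0,T^*)$. This simultaneously establishes $\bar p_{\max} < \infty$ and the bound \eqref{eq:upperBoundPerronSolution}.

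Third, I would show $\bar p_{\max}$ is itself a subsolution. This is the classical Perron-sup argument: the upper-semicontinuous envelope $\bar p_{\max}^*$ of a supremum of subsolutions is a subsolution, provided it is dominated by a supersolution — and here it is dominated by $p_{\mathrm{visc}}$ (which is in particular a supersolution of the full-space problem and, restricted appropriately, handles the lateral-boundary inequality since it vanishes outside its finite-speed support and $\bar p_{\max}$ must too). Concretely, if $(q,a,X) \in \Jplus \bar p_{\max}^*(x,t)$, pick subsolutions $\bar p^{(k)}$ and points $(x_k,t_k)\to(x,t)$ with $\bar p^{(k)}(x_k,t_k) \to \bar p_{\max}^*(x,t)$; a standard lemma on sup's of semijets (cf.~\cite[Proposition~4.3 and the proof of Theorem~4.1]{Crandall:1992}) produces nearby touching points and semijet elements for the individual $\bar p^{(k)}$ converging to $(q,a,X)$, and each satisfies $G^- \le 0$; passing to the limit and using lower-semicontinuity of $G^-$ gives $G^-(x,t,\bar p_{\max}^*(x,t),q,a,X)\le 0$. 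Since $\bar p_{\max}^*\le p_{\mathrm{visc}}$, at the initial time and outside the support everything is consistent, and $\bar p_{\max}^* = \bar p_{\max}$ by definition of the sup (it already contains its envelope). Finally, I would note that all of the above used only the pathwise facts \eqref{eq:uniformBoundDyadicApproximation}, \eqref{eq:uniformConvergenceDyadicApproxiamtion}, \eqref{eq:HoelderContBM}, \eqref{eq:supportAfterShift} — which hold on a set of full probability — together with Lemma~\ref{lem:subsolutionPressureApprox}, so the conclusion holds almost surely.

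The main obstacle I anticipate is the careful bookkeeping of the domain geometry in Step~1: the regions $\bigcup_t(\nu B^\varepsilon_t + I)\times\{t\}$ are not monotone in $\varepsilon$, so when I extract a semijet element of $\bar p_{\hat\varepsilon_k}$ at $(x_k,t_k)$ I need to guarantee that $(x_k,t_k)$ falls in the \emph{correct} branch of the piecewise-defined functional $G^-_{\hat\varepsilon_k}$ matching the branch of $G^-$ at the limit point $(x,t)$; the interior case is fine by uniform convergence and openness of $I$, but the lateral-boundary and exterior cases require showing the limit value is genuinely $0$, which rests on the uniform-in-$\varepsilon$ upper bound $\frac{m}{m-1}\hat\varepsilon^{m-1}$ outside the (slightly shrunk) shifted domain plus the backward-parabolic scaling in \eqref{def:semirelaxedLimitPressure} controlling how far $(x_k,t_k)$ can be from $(x,t)$.
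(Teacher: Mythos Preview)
Your overall strategy---semi-relaxed limit to produce a first subsolution, then Perron's sup, then comparison with $p_{\mathrm{visc}}$---is exactly the paper's. Two points need correction.

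First, a slip in the lateral-boundary bookkeeping. At a point $(x,t)\in\bigcup_{t}(\nu B_t{+}\partial I)\times\{t\}$ the functional $G^-$ from \eqref{eq:functionalCauchyDirichletLimit} reads $(a-F)\wedge C(r)$, not $C(r)$; so you cannot conclude $r\le 0$ there, and your claim ``$\bar p\equiv 0$ at lateral-boundary points'' is unwarranted (a subsolution may well be positive on the lateral boundary, with $a-F\le 0$ supplying the inequality). The paper sidesteps this by computing the \emph{lower semi-relaxed limit of the functionals} $G^-_{\varepsilon,\mathrm{lsc}}$ and identifying it with $G^-$; the standard stability lemma~\cite[Lemma~6.1, Remark~6.3]{Crandall:1992} then yields \eqref{eq:subsolutionCauchyDirichletLimit} in one stroke, without having to track which branch of the piecewise functional the approximating points fall into. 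Your direct case analysis can be made to work (split into subsequences interior vs.\ exterior to the approximate domain; the former gives $a-F\le 0$, the latter forces the limit value to be $0$, and either way $(a-F)\wedge r\le 0$), but as written it is not correct.

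Second, and more substantively, your verification of the initial condition for the \emph{upper-semicontinuous envelope} $\bar p^*_{\mathrm{semi{-}rel}}$ is incomplete. The backward-in-time cylinders in \eqref{def:semirelaxedLimitPressure} give $\bar p_{\mathrm{semi{-}rel}}(x,0)=\bar p_0(x)$, but the envelope at $(x,0)$ is a $\limsup$ over $(y,s)\to(x,0)$ with $s>0$ allowed, and nothing you have written controls $\bar p_{\mathrm{semi{-}rel}}(y,s)$ for small $s>0$. The paper closes this gap by first proving $\bar p_\varepsilon\le p^\varepsilon_{\mathrm{visc}}$ on all of $\R\times[0,T^*)$ (using the last assertion of Lemma~\ref{lem:subsolutionPressureApprox} that $\bar p_\varepsilon$ is a full-space subsolution, plus comparison on a large cylinder~\cite[Lemma~2.5]{Vazquez:2005}), then letting $p^\varepsilon_{\mathrm{visc}}\searrow p_{\mathrm{visc}}$ to obtain $\bar p_{\mathrm{semi{-}rel}}\le p_{\mathrm{visc}}$; continuity of $p_{\mathrm{visc}}$ and $p_{\mathrm{visc}}(\cdot,0)=\bar p_0$ then force the envelope to attain the initial datum. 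You do establish the bound $\bar p\le p_{\mathrm{visc}}$ later, in your Step~2, but only \emph{after} already using that $\bar p^*_{\mathrm{semi{-}rel}}$ is a subsolution (in particular satisfies the initial condition), so the argument as ordered is circular. The fix is simply to move the comparison with $p^\varepsilon_{\mathrm{visc}}$ earlier, before taking the envelope.
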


A crucial estimate for the proof of Theorem~\ref{theorem:mainResult}
is the content of the following result. The asserted bound is important in the sense that it serves to
close the loop between the pathwise constructions performed in this section and the 
unique weak solution of the SPME \eqref{eq:SPME}--\eqref{eq:lateralBC}.

\begin{proposition}
\label{prop:comparisonTransformedWeakSol}
Let $u\in\Hsp^{-1}_{m+1}(I)$ denote the unique weak solution of 
the Cauchy--Dirichlet problem \emph{\eqref{eq:SPME}--\eqref{eq:lateralBC}} 
with initial density $u_0\in C^\infty_{\mathrm{cpt}}(I;[0,\infty))$
in the sense of Definition~\ref{def:weakSolutions}. Define an associated pressure 
function as follows:
\begin{align}
\label{eq:definitionPressure}
p\colon \R\times(0,T^*) &\to [0,\infty)
\\\nonumber
(x,t) &\mapsto	\begin{cases}
								\frac{m}{m-1}u(x{-}\nu B_t,t)^{m-1}, &
								(x,t) \in \bigcup_{t\in [0,T^*)} (\nu B_t{+}I)\times \{t\}, \\
								0, & \text{else}.
								\end{cases} 
\end{align}
Let $\bar p_{\max}$ be the associated maximal subsolution of the problem 
\emph{\eqref{eq:pressPMErough}--\eqref{eq:pressPMElateral}}
as constructed in Proposition~\ref{prop:maxSubsolution}.
Then, for all $t\in (0,T^*)$ the bound
\begin{align}
\label{eq:comparisonTransformedWeakSol}
p(\cdot,t) \leq \bar p_{\max}(\cdot,t)
\end{align}
holds true almost surely almost everywhere in $\R$.
\end{proposition}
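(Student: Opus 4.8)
The plan is to compare $p$ with $\bar p_{\max}$ by squeezing in the approximating subsolutions $\bar p_\varepsilon$ and using that their density counterparts $u_\varepsilon^\leftarrow$ converge in $L^1$ to the transformed weak solution. Concretely, I would work on the set of full probability on which all pathwise facts \eqref{eq:uniformBoundDyadicApproximation}--\eqref{eq:supportAfterShift} hold, on which the Wong--Zakai approximations are classical solutions (Lemma~\ref{lem:classicalSolution}), on which Corollary~\ref{cor:L1convergence} gives the $C([\tau,T^*];L^1(D))$-convergence $u_\varepsilon^\leftarrow\to u$, and on which Proposition~\ref{prop:maxSubsolution} produces $\bar p_{\max}$. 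The first observation is the almost-everywhere identification: since $\bar p_\varepsilon(x,t)=\frac{m}{m-1}u_\varepsilon(x-\nu B_t^\varepsilon,t)^{m-1}$ inside $\nu B_t^\varepsilon+I$ (and a constant $\tfrac{m}{m-1}\varepsilon^{m-1}$ outside), I would rewrite $\bar p_\varepsilon$ in terms of the \emph{shifted} density. Using the definition \eqref{eq:shift}, $u_\varepsilon^\leftarrow(x,t)=u_\varepsilon(x+\nu(B_t-B_t^\varepsilon),t)$, one checks that $\bar p_\varepsilon(x+\nu B_t,t)=\frac{m}{m-1}u_\varepsilon^\leftarrow(x,t)^{m-1}$ on $\nu B_t+I$. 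Since $g(r)=\frac{m}{m-1}r^{m-1}$ is continuous and $u_\varepsilon^\leftarrow\to u$ in $L^1_{\mathrm{loc}}$ on each positive time slice while $0\le u_\varepsilon^\leftarrow\le\varepsilon+\|u_0\|_{L^\infty}$ by \eqref{boundMaximumPrinciple}, a standard dominated-convergence/subsequence argument upgrades this to $g(u_\varepsilon^\leftarrow)\to g(u)$ in $L^1_{\mathrm{loc}}$ on $(0,T^*)$, hence (along a subsequence) a.e. Translating back, $\bar p_\varepsilon(\cdot,t)\to p(\cdot,t)$ a.e.\ for a.e.\ $t$.

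The second, and main, step is to connect this $L^1$/a.e.\ convergence to the relaxed limit $\bar p_{\mathrm{semi-rel}}$ defined in \eqref{def:semirelaxedLimitPressure} and hence to $\bar p_{\max}$. Here I would use that, by Proposition~\ref{prop:maxSubsolution}, the upper-semicontinuous envelope of $\bar p_{\mathrm{semi-rel}}$ is a subsolution of \eqref{eq:pressPMErough}--\eqref{eq:pressPMElateral} and is therefore $\le\bar p_{\max}$ pointwise by maximality \eqref{def:maximalSubsol}. So it suffices to show $p(x,t)\le\bar p_{\mathrm{semi-rel}}(x,t)$ for a.e.\ $t$ and a.e.\ $x$. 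The point is that $\bar p_{\mathrm{semi-rel}}(x,t)$ is, by construction, at least the $\limsup$ over $\varepsilon\to 0$ of $\bar p_\varepsilon$ evaluated along \emph{any} sequence $(y_\varepsilon,s_\varepsilon)\to(x,t)$ with $s_\varepsilon\le t$; in particular it dominates $\limsup_{\varepsilon\to0}\bar p_\varepsilon(x,t)$. Combining this with the a.e.\ convergence $\bar p_\varepsilon(\cdot,t)\to p(\cdot,t)$ from the first step gives $p(x,t)\le\bar p_{\mathrm{semi-rel}}(x,t)$ at a.e.\ $(x,t)$, as desired. To make the "a.e.\ $t$" step rigorous I expect to need a Fubini-type argument: the $C([\tau,T^*];L^1)$ convergence gives, along a subsequence, convergence for a.e.\ $t$ of $u_\varepsilon^\leftarrow(\cdot,t)$ in $L^1$, and then along a further (t-dependent) subsequence a.e.\ in $x$; a diagonalization or a uniform-integrability argument lets one pass to a single subsequence working for a.e.\ $(x,t)$.

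The delicate point — and where I expect the real work to lie — is matching the \emph{shifts}: $\bar p_\varepsilon$ is the $B^\varepsilon$-shifted pressure of $u_\varepsilon$, whereas $p$ is the $B$-shifted pressure of $u$, and the relaxed limit is taken in the fixed $(x,t)$ variables. One must verify that translating by $\nu(B_t-B_t^\varepsilon)$, which by \eqref{eq:supportAfterShift} is uniformly small on $\{\mathcal C_\alpha\le M\}$ for $\varepsilon$ small, does not spoil the a.e.\ convergence; this is exactly why Corollary~\ref{cor:L1convergence} is phrased in terms of the \emph{shifted} densities $u_\varepsilon^\leftarrow$ rather than $u_\varepsilon$ themselves, so the translation is already built in and $g(u_\varepsilon^\leftarrow)\to g(u)$ directly yields $\bar p_\varepsilon(\cdot+\nu B_t,t)\to p(\cdot+\nu B_t,t)$. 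A secondary subtlety is the behaviour near the lateral boundary $\nu B_t+\partial I$, where $\bar p_\varepsilon$ equals the positive constant $\tfrac{m}{m-1}\varepsilon^{m-1}\to0$ and $p$ is set to $0$ by the trivial extension; since this boundary is a null set in space-time and the $L^1$ convergence is insensitive to it, it causes no harm, but it should be remarked on. Once $p\le\bar p_{\mathrm{semi-rel}}\le (\bar p_{\mathrm{semi-rel}})^*\le\bar p_{\max}$ a.e., the proof is complete, the exceptional null set being independent of $t$ after the Fubini/diagonal argument; I would conclude by invoking \eqref{eq:upperBoundPerronSolution} is \emph{not} needed here (that is for the Barenblatt comparison later), so the argument closes using only Proposition~\ref{prop:maxSubsolution} and Corollary~\ref{cor:L1convergence}.
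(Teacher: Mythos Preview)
Your proposal is correct and follows essentially the same route as the paper: reduce to $p\le\bar p_{\mathrm{semi\text{-}rel}}$, use the identity $\bar p_\varepsilon(\,\cdot+\nu B_t,t)=\tfrac{m}{m-1}\,u_\varepsilon^\leftarrow(\cdot,t)^{m-1}$, pass to the limit via the $L^1$ convergence of Corollary~\ref{cor:L1convergence}, and bound by the semi-relaxed supremum since trivially $\bar p_{\hat\varepsilon}(x,t)\le\bar P_\varepsilon(x,t)$. The paper carries out the limit by testing against nonnegative $\phi\in C^\infty_{\mathrm{cpt}}$ and localizing to $\{\mathcal C_\alpha\le M\}$ via \eqref{eq:supportAfterShift}, whereas you extract an a.e.-convergent subsequence; both are fine, and in fact your clean identity absorbs the shift so that \eqref{eq:supportAfterShift} is not strictly needed. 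One simplification you can make: the concern about ``a.e.\ $t$'' and diagonalization is unnecessary, since Corollary~\ref{cor:L1convergence} gives convergence in $C([\tau,T^*];L^1(\Omega{\times}D))$, hence for \emph{every} fixed $t\in(0,T^*)$ you already have $u_\varepsilon^\leftarrow(\cdot,t)\to u(\cdot,t)$ in $L^1(\Omega{\times}D)$, and a $t$-dependent subsequence suffices because the asserted bound allows the $\Prob$-null set to depend on $t$.
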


We have by now everything in place to proceed with the proofs.

\subsection{Proof of Lemma~\ref{lem:subsolutionPressureApprox} 
{\normalfont (Subsolution for the regularized problem)}}
The assertion that $\bar p_\varepsilon\in C(\R\times [0,T^*))$ 
follows from the definition \eqref{eq:definitionPressureApprox},
the regularity of the Wong--Zakai approximation 
$u_\varepsilon\in C(\bar{I}\times [0,T^*))$
and that $u_\varepsilon$ satisfies the lateral boundary condition
\eqref{eq:densityPMElateral2} (cf.\ Lemma~\ref{lem:classicalSolution})
pointwise. Moreover, $\bar p_{\varepsilon}(x,0)=\bar p_{0,\varepsilon}(x)$
holds true for all $x\in\R$ because of \eqref{eq:definitionPressureApprox}, 
$\mathrm{supp}\,u_0\subset I$ and $u_\varepsilon(x,0)=u_0(x)+\varepsilon$.
The upper and lower bound of \eqref{eq:boundPressureMaxPrinciple}
is a direct consequence of again the definition \eqref{eq:definitionPressureApprox}
and the upper and lower bound of \eqref{boundMaximumPrinciple}.

We next have to show that \eqref{eq:subsolutionCauchyDirichletApprox}
is satisfied for all $(x,t)\in \R\times (0,T^*)$ and all $(q,a,X)\in \Jplus \bar p_\varepsilon(x,t)$.
The claim is trivial for $(x,t) \in \bigcup_{t\in (0,T^*)} (\nu B^\varepsilon_t{+}I)\times \{t\}$.
Indeed, since $u_\varepsilon$ solves \eqref{eq:densityPME2}--\eqref{eq:densityPMElateral2}
classically in $I\times (0,T^*)$ and $B^\varepsilon$ is smooth we infer from 
\eqref{eq:definitionPressureApprox}, the chain rule and elementary computations that $\bar p_\varepsilon$
satisfies the porous medium equation 
$\partial_t\bar p_\varepsilon = \bar p_\varepsilon\partial_{xx}\bar p_\varepsilon + |\partial_x\bar p_\varepsilon|^2$
classically in the open space-time domain $\bigcup_{t\in (0,T^*)} (\nu B^\varepsilon_t{+}I)\times \{t\}$.
If $(x,t)\in\bigcup_{t\in (0,T^*)} (\nu B^\varepsilon_t{+}(\R\setminus I))\times \{t\}$, the 
claim is again trivial by the definition \eqref{eq:definitionPressureApprox} of $\bar p_\varepsilon$
and \eqref{eq:functionalCauchyDirichletApprox}. This proves \eqref{eq:subsolutionCauchyDirichletApprox}.

We finally have to show that \eqref{eq:subsolutionFullSpace} holds true
for all $(x,t)\in \R\times (0,T^*)$ and all $(q,a,X)\in \Jplus \bar p_\varepsilon(x,t)$.
By the previous reasoning, it just remains to consider the case of a space-time point on the
lateral boundary $(x,t)\in\bigcup_{t\in (0,T^*)} (\nu B^\varepsilon_t{+}\partial I)\times \{t\}$.
Without loss of generality we may assume that $(x,t)$ sits on the upper part of the lateral
boundary. Let $(q,a,X)\in \Jplus \bar p_\varepsilon(x,t)$ be fixed. Since $\bar p_\varepsilon$ satisfies 
the lateral boundary condition \eqref{eq:pressPMElateral2} pointwise, it follows from the definition 
of the parabolic semijet \eqref{eq:parSemijet} that
\begin{align*}
\bar p_\varepsilon(x{-}1/k,t) \leq 
\frac{m}{m-1}\varepsilon^{m-1} - \frac{q}{k} + \frac{X}{2k^2} + o(1/k^2)
\end{align*}
as well as
\begin{align*}
\frac{m}{m-1}\varepsilon^{m-1} = \bar p_\varepsilon(x{+}1/k,t) \leq 
\frac{m}{m-1}\varepsilon^{m-1} + \frac{q}{k} + \frac{X}{2k^2} + o(1/k^2)
\end{align*}
for all sufficiently large $k\geq 1$. Due to the lower bound in \eqref{eq:boundPressureMaxPrinciple}
we infer by adding both inequalities that $0\leq X/k^2+o(1/k^2)$ for all sufficiently large $k\geq 1$.
From this we deduce that $X\geq 0$. Using once more \eqref{eq:parSemijet} as well as that 
$\bar p_\varepsilon$ satisfies the lateral boundary condition \eqref{eq:pressPMElateral2} 
pointwise it also holds
\begin{align*}
\bar p_\varepsilon(x,t{-}1/k) \leq \frac{m}{m-1}\varepsilon^{m-1} - \frac{a}{k} + o(1/k)
\end{align*}
for all sufficiently large $k\geq 1$. Hence, by another application of the lower bound in
\eqref{eq:boundPressureMaxPrinciple} we infer that $0\leq -a/k + o(1/k)$ for all sufficiently 
large $k\geq 1$. In other words, it holds $a\leq 0$. To summarize we have shown that
\begin{align*}
a - F(\bar p_\varepsilon(x,t),q,X) = a - (m{-}1)\bar p_{\varepsilon}(x,t)X - |q|^2 \leq 0
\end{align*}
is satisfied for all $(x,t)\in\bigcup_{t\in (0,T^*)} (\nu B^\varepsilon_t{+}\partial I)\times \{t\}$
and all $(q,a,X)\in \Jplus \bar p_\varepsilon(x,t)$. This proves \eqref{eq:subsolutionFullSpace} 
and thus concludes the proof of Lemma~\ref{lem:subsolutionPressureApprox}. \qed

\subsection{Proof of Proposition~\ref{prop:maxSubsolution} 
{\normalfont (Perron's solution for the limit problem)}}
First note that $0\leq \bar p_{\text{semi-rel}}(x,t)<\infty$ is satisfied 
for all $(x,t)\in \R\times [0,T^*)$ as a consequence of \eqref{def:semirelaxedLimitPressure} and 
\eqref{eq:boundPressureMaxPrinciple}. Since $\bar p_\varepsilon(x,0) = \bar p_{0,\varepsilon}(x)
=\frac{m}{m-1}(u_0(x){+}\varepsilon)^{m-1}$ for all $x\in\R$ we deduce that
$\bar p_{\text{semi-rel}}(x) = \frac{m}{m-1}u_0(x)^{m-1}=\bar p_0(x)$ holds true
for all $x\in\R$ since $u_0$ is continuous and the semi-relaxed limit 
\eqref{def:semirelaxedLimitPressure} is defined in terms of backwards parabolic cylinders.

We aim to show that $\bar p_{\text{semi-rel}}$ satisfies \eqref{eq:subsolutionCauchyDirichletLimit}
for all $(x,t)\in \R\times (0,T^*)$ and all $(q,a,X)\in \Jplus \bar p(x,t)$. For this,
we will proceed in two steps as follows. Consider the lower-semicontinuous functional
\begin{align}
\label{eq:functionalCauchyDirichletApproxLSC}
&G^-_{\varepsilon,\text{lsc}}\colon \R\times (0,T^*)\times [0,\infty) \times \R^3 \to \R
\\\nonumber
&(x,t,r,q,a,X) \mapsto
\begin{cases}
a - F(r,q,X), & (x,t)\in \bigcup_{t\in (0,T^*)} (\nu B^\varepsilon_t{+}I)\times \{t\}, \\
(a - F(r,q,X))\wedge C_\varepsilon(r), & 
(x,t) \in \bigcup_{t\in (0,T^*)} (\nu B^\varepsilon_t{+}\partial I)\times \{t\}, \\
C_\varepsilon(r), & \text{else}.
\end{cases}
\end{align}
Since $\bar p_\varepsilon$ is a subsolution of \eqref{eq:pressPMErough2}--\eqref{eq:pressPMElateral2}
in the sense of Definition~\ref{def:subsolutionCauchyDirichlet}, it of course also satisfies
\begin{align}
\label{eq:subsolutionCauchyDirichletApproxLSC}
G^-_{\varepsilon,\text{lsc}}(x,t,\bar p_\varepsilon(x,t),q,a,X) \leq 0
\end{align}
for all $(x,t)\in \R\times (0,T^*)$ and all $(q,a,X)\in \Jplus \bar p_\varepsilon(x,t)$.
Now, define the lower semi-relaxed limit functional
\begin{equation}
\begin{aligned}
\label{def:semirelaxedLimitFunctional}
& G^-_{\mathrm{semi-rel}}(x,t,r,q,a,X)
\\&
:=\lim_{\varepsilon\to 0}\inf_{\hat\varepsilon\leq\varepsilon}
\big\{G^-_{\hat\varepsilon,\text{lsc}}(y,s,\hat r,\hat q,\hat a,\hat X)
\colon (y,s)\in\R{\times}(0,T^*){\times} [0,\infty) {\times} \R^3, \\&\hspace*{2cm}
(y,s)\in (x{-}\varepsilon,x{+}\varepsilon){\times}(t{-}\varepsilon^2,t],\,
|(r{-}\hat r,q{-}\hat q,a{-}\hat a,X{-}\hat X)|<\varepsilon\big\}.
\end{aligned}
\end{equation}
In a first step, we check that $\bar p_{\text{semi-rel}}$ satisfies
\begin{align}
\label{eq:subsolutionCauchyDirichletApproxSemirel}
G^-_{\text{semi-rel}}(x,t,\bar p_{\text{semi-rel}}(x,t),q,a,X) \leq 0
\end{align}
for all $(x,t)\in \R\times (0,T^*)$ and all $(q,a,X)\in \Jplus \bar p_{\text{semi-rel}}(x,t)$.
In a second step, we identify the lower semi-relaxed limit $G^-_{\mathrm{semi-rel}}$ with
the functional $G^-$ defined in \eqref{eq:functionalCauchyDirichletLimit}
as a consequence of the uniform convergence \eqref{eq:uniformConvergenceDyadicApproxiamtion}.

The validity of \eqref{eq:subsolutionCauchyDirichletApproxSemirel} is a consequence
of standard viscosity theory. More precisely, \eqref{eq:subsolutionCauchyDirichletApproxSemirel} 
follows as a combination of~\cite[Lemma 6.1]{Crandall:1992}, \cite[Remark 6.3]{Crandall:1992}
(for the purpose of subsolutions, the equations do not have to be proper) as well as
the applicability of~\cite[Proposition 4.3]{Crandall:1992}. Hence, let us show that 
$G^-_{\mathrm{semi-rel}}=G^-$.

Due to the uniform convergence \eqref{eq:uniformBoundDyadicApproximation} 
and the definitions \eqref{eq:functionalCauchyDirichletLimit},
\eqref{eq:functionalCauchyDirichletApproxLSC} and \eqref{def:semirelaxedLimitFunctional} 
the statement is clear for space-time points $(x,t)\notin\bigcup_{t\in (0,T^*)}(\nu B_t{+}\partial I){\times}\{t\}$.
Hence, let us fix a point $(x,t)\in\bigcup_{t\in (0,T^*)}(\nu B_t{+}\partial I){\times}\{t\}$
and an $\varepsilon>0$. Let 
\begin{align*}
\hat\varepsilon_*(\varepsilon) := 
\sup\{0<\hat\varepsilon\leq\varepsilon\colon \nu|B^{\hat\varepsilon}_t{-}B_t|<\varepsilon/2\}.
\end{align*}
Note that $\hat\varepsilon_*>0$ because of the uniform convergence \eqref{eq:uniformBoundDyadicApproximation}. 
We then have
\begin{align*}
\lim_{\varepsilon\to 0}\inf_{(\hat r,\hat q,\hat a,\hat X)} 
(\hat a {-} F(\hat r,\hat q,\hat X))\wedge C_\varepsilon(\hat r)
&\leq G^-_{\mathrm{semi-rel}}(x,t,r,q,a,X)
\\&
\leq \lim_{\varepsilon\to 0}\inf_{(\hat r,\hat q,\hat a,\hat X)} 
(\hat a {-} F(\hat r,\hat q,\hat X))\wedge C_{\hat\varepsilon_*(\varepsilon)}(\hat r),
\end{align*}
and where both inner infima run over all points $(\hat r,\hat q,\hat a,\hat X)\in [0,\infty)\times\R^3$ such that
$|(r{-}\hat r,q{-}\hat q,a{-}\hat a,X{-}\hat X)|<\varepsilon$. It follows that
$G^-_{\mathrm{semi-rel}}(x,t)=G^-(x,t)$ as claimed.

We have shown so far that $\bar p_{\text{semi-rel}}$ satisfies all conditions of
Definition~\ref{def:subsolutionCauchyDirichlet} except of being upper-semicontinuous
on $\R\times [0,T^*)$. Since $\bar p_{\text{semi-rel}}$ is subject to 
\eqref{eq:subsolutionCauchyDirichletLimit}, it is a classical fact of viscosity theory
that also its upper-semicontinuous envelope satisfies \eqref{eq:subsolutionCauchyDirichletLimit}.
(A rigorous argument consists of applying~\cite[Lemma 6.1]{Crandall:1992} to
a constant sequence.) It remains to check whether the upper-semicontinuous envelope
of $\bar p_{\text{semi-rel}}$ satisfies the initial condition with respect to
$\bar p_0:=\frac{m}{m-1}u_0^{m-1}$. This follows from the following reasoning.

Consider the classical solution (and therefore also viscosity solution in the sense 
of~\cite[Definition 4]{Caffarelli:1999}) $p^\varepsilon_{\text{visc}}$ of
\eqref{eq:fullSpacePorMed}--\eqref{eq:fullSpaceInitialCond} with initial pressure
$p_{0,\varepsilon}:=\frac{m}{m-1}(u_0{+}\varepsilon)^{m-1}$. Because of \eqref{eq:uniformBoundDyadicApproximation},
we can choose almost surely a space-time cylinder $Q=Q(B^\varepsilon)$ such that
the parabolic closure of $\bigcup_{t\in (0,T^*)}(\nu B^\varepsilon_t{+}I){\times}\{t\}$
is contained in $Q$. By the maximum principle, $p^\varepsilon_{\text{visc}}$ is also subject to
\eqref{eq:boundPressureMaxPrinciple} on $\R\times [0,T^*)$. In particular, $p^\varepsilon_{\text{visc}}$
dominates $\bar p_\varepsilon$ on the parabolic boundary of $Q$. Since $\bar p_\varepsilon$ is 
continuous and a subsolution of \eqref{eq:fullSpacePorMed}--\eqref{eq:fullSpaceInitialCond}
by Lemma~\ref{lem:subsolutionPressureApprox}, it follows from~\cite[Lemma 2.5]{Vazquez:2005} that
\begin{align*}
\bar p_\varepsilon(x,t) \leq p^\varepsilon_{\text{visc}}(x,t)
\end{align*}
for all $(x,t)$ in the parabolic closure of $Q$, hence for all $(x,t)\in\R\times [0,T^*)$
by expanding $Q$ to $\R\times [0,T^*)$. Since $p^\varepsilon_{\text{visc}}\searrow p_{\text{visc}}$
with $p_{\mathrm{visc}}\in C(\R\times [0,T^*))$ being the viscosity
solution of \eqref{eq:fullSpacePorMed}--\eqref{eq:fullSpaceInitialCond}
with initial pressure $p_0(x):=\frac{m}{m-1}u_0(x)^{m-1}$ in the sense 
of~\cite[Definition 4]{Caffarelli:1999}, cf.\ the proof of~\cite[Lemma 2.2]{Caffarelli:1999}, 
we obtain the bound 
\begin{align}
\label{eq:upperBoundSubsolutions}
\bar p_{\text{semi-rel}}(x,t) \leq p_{\mathrm{visc}}(x,t)
\end{align}
for all $(x,t)\in \R\times [0,T^*)$. However, since we already proved that
$\bar p_{\text{semi-rel}}(x,0)=\frac{m}{m-1}u_0(x)^{m-1}=p_{\text{visc}}(x,0)$ and 
$p_{\text{visc}}$ is continuous, it follows that also the upper-semicontinuous
envelope of $\bar p_{\text{semi-rel}}$ attains the initial condition. Hence,
it is a subsolution of \emph{\eqref{eq:pressPMErough}--\eqref{eq:pressPMElateral}}
in the sense of Definition~\ref{def:subsolutionCauchyDirichlet}.

We continue with the verification of the claims regarding $\bar p_{\text{max}}$.
To this end, consider first an arbitrary subsolution $\bar p$ of 
\eqref{eq:pressPMErough}--\eqref{eq:pressPMElateral}
with initial pressure $\bar p_0:=\frac{m}{m-1} u_0^{m-1}$
in the sense of Definition~\ref{def:subsolutionCauchyDirichlet}. 
The argument showing \eqref{eq:upperBoundSubsolutions} 
more generally proves that
\begin{align}
\label{eq:upperBoundAllSubsolutions}
\bar p(x,t) \leq p_{\mathrm{visc}}(x,t)
\end{align}
is satisfied for all $(x,t)\in \R\times [0,T^*)$. Hence,
since we already established the existence of \textit{a} subsolution
to \eqref{eq:pressPMErough}--\eqref{eq:pressPMElateral} the definition
of $\bar p_{\text{max}}$ is meaningful, and the asserted bound \eqref{eq:upperBoundPerronSolution}
then follows at once from \eqref{eq:upperBoundAllSubsolutions}. In particular,
$\bar p_{\max}<\infty$ is satisfied on $\R\times [0,T^*)$. That $\bar p_{\max}$ is subject to
\eqref{eq:subsolutionCauchyDirichletLimit} is once again a classical
fact from viscosity theory, see~\cite[Lemma 4.2]{Crandall:1992}. Moreover, as we have already
argued for the semi-relaxed limit $p_{\text{semi-rel}}$, it then follows that
the upper-semicontinuous envelope of $\bar p_{\max}$ constitutes a subsolution
of \eqref{eq:pressPMErough}--\eqref{eq:pressPMElateral}. However, by the definition of
$\bar p_{\max}$ we may then infer that $\bar p_{\max}$ is actually equal to its
upper-semicontinuous envelope; in particular, a subsolution of \eqref{eq:pressPMErough}--\eqref{eq:pressPMElateral}
in the sense of Definition~\ref{def:subsolutionCauchyDirichlet}.
This concludes the proof of  Proposition~\ref{prop:maxSubsolution}. \qed

\subsection{Proof of Proposition~\ref{prop:comparisonTransformedWeakSol}
{\normalfont (Comparison of transformed weak solution with viscosity solution)}}
It obviously suffices to prove that for all $t\in (0,T^*)$
\begin{align}
\label{eq:lowerBoundSemirelaxed}
p(\cdot,t) \leq \bar p_{\text{semi-rel}}(\cdot,t)
\end{align}
almost surely almost everywhere in $\R$. By definition of $p$, see \eqref{eq:definitionPressure}, 
and since we have $\bar p_{\text{semi-rel}}\geq 0$ it moreover suffices to show that  
\eqref{eq:lowerBoundSemirelaxed} is almost surely satisfied for all $t\in (0,T^*)$ almost everywhere in $\nu B_t+I$. 
 
We aim to exploit Corollary~\ref{cor:L1convergence}, i.e., that the unique weak solution $u$ of 
\eqref{eq:SPME}--\eqref{eq:lateralBC} can be recovered by means of the Wong--Zakai approximations 
$u_\varepsilon$ from Lemma~\ref{lem:classicalSolution}, at least after 
employing an additional time-dependent shift \eqref{eq:shift}. So fix an integer $M\geq 1$ 
as well as some $\kappa>0$. Let $\mathcal{C}_\alpha$ be the square integrable random variable of 
the estimate \eqref{eq:HoelderContBM}. Let finally $\varepsilon>0$ be fixed, and denote by
$\varepsilon'=\varepsilon'(M,\varepsilon)$ the constant from \eqref{eq:supportAfterShift}.

We then have by means of the estimate \eqref{eq:supportAfterShift} that
\begin{align}
\nonumber
&\frac{m}{m-1}u_{\hat\varepsilon}(x{-}\nu B_t,t)^{m-1} 
\\&\nonumber
=\frac{m}{m-1}u_{\hat\varepsilon}\big((x{-}\nu(B_t{-}B^{\hat\varepsilon}_t))-\nu B^{\hat\varepsilon}_t,t\big)^{m-1}
\\&\label{eq:aux}
\leq \sup_{\hat\varepsilon\leq\varepsilon}
\big\{\bar p_{\hat\varepsilon}(y,s)\colon (y,s)\in\R{\times}[0,T^*),\,
(y,s)\in (x{-}\varepsilon,x{+}\varepsilon){\times}(t{-}\varepsilon^2,t]\big\}
\\&\nonumber
=: \bar P_\varepsilon(x,t)
\end{align}
almost surely on $\{\mathcal{C}_\alpha\leq M\}$ for all $\hat\varepsilon\leq\varepsilon\wedge\varepsilon'$
and all $(x,t)\in \bigcup_{t\in (0,T^*)} (\nu B_t{+}I)\times \{t\}$.
Fix $t\in (0,T^*)$, let $\phi\in C_{\mathrm{cpt}}^\infty(\nu B_t{+}I;[0,\infty))$
be an arbitrary test function and let $A\in\mathcal{F}_{T^*}$. It follows from \eqref{eq:aux} that
\begin{equation}
\begin{aligned}
\label{eq:aux2}
0&\leq\E\chi_{A\cap\{C_\alpha{\leq}M\}}\int_\R \phi(x{+}\nu B_t^{\hat\varepsilon})
\frac{m}{m-1}u^\leftarrow_{\hat\varepsilon}(x,t)^{m-1} \dx
\\&
\leq \E\chi_{A\cap\{C_\alpha{\leq}M\}}\int_\R \phi(x)\bar P_\varepsilon(x,t) \dx
\end{aligned}
\end{equation}
for all $\hat\varepsilon\leq\varepsilon\wedge\varepsilon'$,
all $t\in (0,T^*)$, all $\phi\in C_{\mathrm{cpt}}^\infty(\nu B_t{+}I;[0,\infty))$ and all $A\in\mathcal{F}_{T^*}$.
As a consequence of the bound \eqref{eq:aux2}, the convergences in \eqref{eq:uniformConvergenceDyadicApproxiamtion}
resp.\ Corollary~\ref{cor:L1convergence} as well as the definition \eqref{def:semirelaxedLimitPressure} 
of the semi-relaxed limit $\bar p_{\text{semi-rel}}$, we thus obtain the estimate
\begin{equation}
\begin{aligned}
\label{eq:aux3}
0 &\leq \E\chi_{A\cap\{C_\alpha{\leq}M\}}\int_\R \phi(x)\frac{m}{m-1}u(x{-}\nu B_t,t)^{m-1} \dx
\\&
\leq \E\chi_{A\cap\{C_\alpha{\leq}M\}}\int_\R \phi(x)\bar p_{\text{semi-rel}}(x,t) \dx
\end{aligned}
\end{equation}
for all $t\in (0,T^*)$, all $\phi\in C_{\mathrm{cpt}}^\infty(\nu B_t{+}I;[0,\infty))$ and all $A\in\mathcal{F}_{T^*}$. 
Since the test function and $A$ are arbitrary, we deduce that the following bound holds true 
for all $t\in (0,T^*)$ almost surely on $\mathcal\{C_\alpha \leq M\}$ almost everywhere in $\nu B_t{+}I$:
\begin{align*}
p(\cdot,t) = \frac{m}{m-1}u(\cdot\,{-}\nu B_t,t)^{m-1} 
\leq \bar p_{\text{semi-rel}}(\cdot,t).
\end{align*}
Since $M\geq 1$ is arbitrary, and the random variable $\mathcal{C}_\alpha$
is integrable, we may infer from this the desired bound \eqref{eq:comparisonTransformedWeakSol}.
This concludes the proof of Proposition~\ref{prop:comparisonTransformedWeakSol}. \qed

\subsection{Proof of Theorem~\ref{theorem:mainResult}
{\normalfont (Finite time extinction with probability one)}}
\label{sec:proofMainResult}
Let $p_{\mathrm{visc}}\in C(\R\times [0,\infty))$ be the unique viscosity
solution of \eqref{eq:fullSpacePorMed}--\eqref{eq:fullSpaceInitialCond}
with initial pressure $p_0(x):=\frac{m}{m-1}u_0(x)^{m-1}$ in the sense 
of~\cite[Definition 4]{Caffarelli:1999}. Since the time horizon $T^*<\infty$
was arbitrary in Proposition~\ref{prop:maxSubsolution} and Proposition~\ref{prop:comparisonTransformedWeakSol},
we have constructed almost surely a (maximal) subsolution $\bar p_{\max}$ of 
\eqref{eq:pressPMErough}--\eqref{eq:pressPMElateral} on $\R\times [0,\infty)$ 
in the sense of Definition~\ref{def:subsolutionCauchyDirichlet} such that it holds almost surely
\begin{align}
\label{eq:upperBoundPerronSolution2}
\bar p_{\max}(x,t) \leq p_{\mathrm{visc}}(x,t)
\end{align}
for all $(x,t)\in \R\times [0,\infty)$, as well as for all $t\in (0,\infty)$
\begin{align}
\label{eq:comparisonTransformedWeakSol2}
p(x,t) \leq \bar p_{\max}(x,t)
\end{align}
almost surely almost everywhere in $\R$, where $p$ is
defined on $\R\times (0,\infty)$ via \eqref{eq:definitionPressure} based on the
unique weak solution of the Cauchy--Dirichlet problem \eqref{eq:SPME}--\eqref{eq:lateralBC} 
on $\R\times (0,\infty)$ with initial density $u_0$. 

Next, we choose a delayed Barenblatt solution $\mathcal{B}$ (written in terms of the pressure variable) 
with free boundary $\partial I$ at $t=0$ and which strictly dominates the initial density,
i.e., $\{\mathcal{B}(\cdot,0)=0\}=\partial I$ and $ u_0<\mathcal{B}(\cdot,0)$ on $I$.
The free boundary associated to the Barenblatt solution $\mathcal{B}$ can then be written as
$\bigcup_{t\in [0,\infty)}(\bar Mt^\frac{1}{m+1}{+}\partial I){\times}\{t\}$ 
for a constant $\bar M=\bar M(u_0)>0$ depending only on the initial density. 

We remark that the Barenblatt solution $\mathcal{B}$ is a viscosity solution 
of \eqref{eq:fullSpacePorMed}--\eqref{eq:fullSpaceInitialCond}
in the sense of~\cite[Definition 4]{Caffarelli:1999} strictly dominating $p_{\text{visc}}$ 
at $t=0$ in the sense of~\cite[Definition 5]{Caffarelli:1999}. Hence, it 
follows from the comparison principle~\cite[Theorem 4.1]{Caffarelli:1999} that
almost surely
\begin{align}
\label{eq:comparisonBarenblatt}
p_{\text{visc}}(x,t) \leq \mathcal{B}(x,t)
\end{align}
for all $(x,t)\in \R\times [0,\infty)$. 

Let $M>\bar M$ be fixed but otherwise arbitrary,
and define the stopping time $\hatText := \inf\{t\geq 0\colon |B_t| \geq \mathcal{L}^1(I){+}Mt^\frac{1}{m+1}\}$
as in the statement of Theorem~\ref{theorem:mainResult}. Furthermore, fix $T\in (0,\infty)$
and consider $\hat{\mathcal{A}}_T:=\{\hat T_{\mathrm{extinct}}\leq T\}$. Note that $\hatText>0$.
By almost sure continuity of $B_t$ and since $M>\bar M$, there is almost surely on $\hat{\mathcal{A}}_T$ some 
$t_q\in\mathbb{Q}\cap (0,T]$ such that $|B_{t_q}|\geq \mathcal{L}^1(I)+\bar Mt_q^{\frac{1}{m+1}}$.
As a consequence of \eqref{eq:upperBoundPerronSolution2} and the choice of $\bar M$, we deduce 
that almost surely on $\hat{\mathcal{A}}_T$ it holds $\bar p_{\mathrm{max}}(\cdot,t_q)\equiv 0$.
Moreover, \eqref{eq:comparisonTransformedWeakSol2} entails that almost surely we have
$p\leq p_{\text{max}}$ for all $t\in\mathbb{Q}\cap (0,\infty)$ almost everywhere in $\R$.
Hence, we may deduce that $\hat{\mathcal{A}}_T \subset \{\Text\leq T\}$ up to some $\Prob$
null set. This  proves the desired bound \eqref{eq:boundExtinctionTime}.  

Finite time extinction with probability one now follows from standard properties of Brownian motion. Indeed,
assume that we have $\Prob(\hatText<\infty) < 1$. There would then exist a constant $C<\infty$ 
such that on a set with non-vanishing probability it holds $0\leq t^{-\frac{1}{2}}|B_t|<C$ 
for all $t\in (1,\infty)$; a contradiction. Hence, $\Prob(\hatText<\infty) = 1$ which
also by \eqref{eq:boundExtinctionTime} entails that $\Prob(\Text<\infty) = 1$.

It remains to prove that $u(\cdot,T)$ vanishes almost surely almost everywhere in $I$ 
from the stopping time $\hatText$ onwards. So let $T \in (0,\infty)$ be fixed.
We infer from \eqref{eq:upperBoundPerronSolution2} together with \eqref{eq:comparisonBarenblatt} 
that almost surely $\bar p_{\max}(\cdot,\hatText)\equiv 0$. In addition, we note that $\bar p_{\max}$ is almost surely a 
subsolution of \eqref{eq:fullSpacePorMed}--\eqref{eq:fullSpaceInitialCond}
on $\R\times [0,\infty)$ with initial pressure $p_0(x):=\frac{m}{m-1}u_0(x)^{m-1}$ in 
the sense of Definition~\ref{def:subsolutionFullSpace}. For a proof, the only non-trivial case
is that of a lateral boundary point $(x,t)\in\bigcup_{t\in (0,\infty)}(\nu B_t{+}I){\times}\{t\}$
such that $C(\bar p_{\max}(x,t))=\bar p_{\max}(x,t)\leq 0$. Since $\bar p_{\max}(x,t)\geq 0$
by construction, it follows that $\bar p_{\max}(x,t)=0$. 
From this point onwards one may argue as for $\bar p_\varepsilon$ in the proof of Lemma~\ref{lem:subsolutionPressureApprox}. 

However, once a subsolution became trivial it stays trivial (by comparison with the trivial viscosity solution).
We deduce that $\bar p_{\max}(\cdot,T)\equiv 0$ is satisfied almost surely on $\{T\geq\hatText\}$.
The claim thus follows from another application of \eqref{eq:comparisonTransformedWeakSol2}.
This concludes the proof of Theorem~\ref{theorem:mainResult}.
\qed


\end{document}